\crefname{hypothesis}{Hypothesis}{Hypotheses}
\title{Dynamics of measure-valued agents\\ in the space of probabilities\thanks{Submitted to the editors \today.
\funding{The work of G.B. and A.S. is funded by the Deutsche Forschungsgemeinschaft (DFG, German Research Foundation) through 320021702/GRK2326 ``Energy, Entropy, and Dissipative Dynamics (EDDy)''. 
M.H.  thanks the Deutsche Forschungsgemeinschaft (DFG, German Research Foundation) for the financial support through 320021702/ GRK2326,  333849990/IRTG-2379, CRC1481, HE5386/18-1,19-2,22-1,23-1, ERS SFDdM035 and under Germany’s Excellence Strategy EXC-2023 Internet of Production 390621612 and under the Excellence Strategy of the Federal Government and the Länder.}}}
\author{Giacomo Borghi\thanks{Heriot-Watt University, School of Mathematical
and Computer Sciences, Currie EH14 4AP, Edinburgh, United Kingdom (\email{g.borghi@hw.ac.uk}, corresponding author)}
\and Michael Herty\thanks{RWTH Aachen University, Institut für Geometrie und Praktische Mathematik, Templergraben 55, 52062, Aachen, Germany (\email{herty@igpm.rwth-aachen.de})}
\and Andrey Stavitskiy\thanks{RWTH Aachen University, Institut für Mathematik, Templergraben 55, 52062, Aachen, Germany (\email{stavitskiy@eddy.rwth-aachen.de})}}
\def\RR{{\mathbb R}}
\def\T{\mathrm {T}}
\def\N{\mathbb N}
\def\Rd{{\mathbb {R}^\textup{d} }}
\def\Leb{{\mathcal {L} }}
\def\Lebo{{\mathcal {L}^1 }}
\def\diam{{\mathrm {diam} }}
\def\supp{\mathrm{supp}}
\def\ve{\varepsilon}
\def\E{\mathcal{E}}
\def \d {\textup{d}}
\def\Lip{\mathrm{Lip}}
\def\Bar{\mathrm{Bar}}
\def\Sol{\mathrm{Sol}}
\def\Cl{\mathrm{Cl}}
\def\Id{\textup{Id}}
\begin{document}

\maketitle

\begin{abstract}
Motivated by the development of dynamics in probability spaces, we propose a novel multi-agent dynamic of consensus type where each agent is a probability measure.  The agents move instantaneously towards a weighted barycenter of the ensemble according to the 2-Wasserstein metric. We mathematically describe the evolution as a system of measure differential inclusions and show the existence of solutions for compactly supported initial data. Inspired by the consensus-based optimization, we apply the multi-agent system to solve a minimization problem over the space of probability measures. In the small numerical example, each agent is described by a particle approximation and aims to approximate a target measure.
\end{abstract}

\begin{keywords}
multi-agent systems, Wasserstein space, consensus dynamics, measure differential inclusions, consensus-based optimization
\end{keywords}

\begin{MSCcodes}
34G20, 35D30, 
49Q22
\end{MSCcodes}

\section{Introduction}

Multi-agent dynamics are mathematical models used to describe the interaction between several undistinguishable entities, called \textit{agents}, over time. The many fields of application of dynamical multi-agent systems range from sociology \cite{friedkin1990}, finance \cite{Samanidou2007}, biology \cite{carrillo2019biology}, to sensor networks \cite{yu2009sensor} and optimization \cite{ma2019multi}. In this work, we are particularly interested in multi-agent dynamical systems as an algorithmic process where a complex task is solved by the agents' ensemble via simple interaction rules, possibly involving random choices. Consensus dynamics \cite{becchetti2020consensus} refers to systems where the agents, starting from different initial configurations, aim to self-organize and reach an agreement, or \textit{consensus}, among a common value, that could be, for instance, a simple average of the initial configurations or even a solution to an optimization problem \cite{pinnau2017consensus}. In this context, having a mathematical formulation of the multi-agent dynamics is essential to provide analytical insights into the self-organizing properties of the system. One is interested, typically, in investigating stationary states of the dynamics and their stability against small perturbations. Another property to analyze, of course, is the capability of the agents to converge to a consensus configuration and, eventually, the time needed to achieve it. We refer to the recent survey \cite{becchetti2020consensus} for more details on consensus dynamics.

Depending on the application, the agent configurations usually consist of a discrete set of labels or a continuous space $\Rd$ for some space dimension $\d \in \mathbb{N}$. Here, however, we consider agents described by Borel probability measures with bounded second moments $\mathcal{P}_2(\Rd)$, as they allow to model many interesting objects such as large data sets, images, and noisy measurements. The aim is to derive a multi-agent system that is directly defined at such an infinite dimensional level and that does not rely on a specific numerical representation of the agents. In particular, we equip $\mathcal{P}_2(\Rd)$ with the 2-Wasserstein distance $W_2(\cdot, \cdot)$ and exploit the structure given by such metric. Wasserstein distances are related to the problem of optimal transportation between probability measures \cite{santambrogio2015optimal} and have recently gained great popularity in different applied fields such as unsupervised learning \cite{wassGAN}, statistics \cite{Panaretos2020}, and image processing \cite{Bonneel2023imaging}. 

Consider $N$ agents with configurations $\mu^i_t \in \mathcal{P}_2(\Rd)$, $i =1, \dots, N$ at time $t\geq 0$. In Euclidean settings, the agents' consensus interaction tend to average their configurations to reach full synchronization. If the communication network is fully connected, this consists of a movement towards a common, possibly weighted, average of their configurations. The concept of average is generalized in metric spaces by the concept of  Fr{\'e}chet mean \cite{frechet1948elements}, or barycenter, which in our settings consists of a solution to 
\begin{equation}
\underset{\nu \in \mathcal{P}(\Rd)}{\min} \sum_{i=1}^N \lambda(\mu^i_t)\, W_2^2\left(\mu^i_t , \nu\right)
\label{eq:Nbary}
\end{equation}
known as weighted Wasserstein barycenter \cite{carlier2011bary}, where $\lambda: \mathcal{P}_2(\Rd) \to [0,\infty)$ is a weight function that describes the influence of each agent $\mu_t^i$ in the dynamics. We aim to model a dynamics where every agent instantaneously moves towards the weighted barycenter along one of its geodesics. 

To describe such evolution, we rely on the framework of Measure Differential Equations (MDE) and Inclusions (MDI) introduced in \cite{piccoli2019measure,piccoli2018mdi}.
Fix the weight function $\lambda$, and consider $N$ agents described by absolutely continuous measures $\mu_t^i$. We denote with $T_{\mu^i_t}^{\overline{\mu}_t}$ the optimal transport map between the agent $\mu^i_t$ and the barycenter $\overline{\mu}_t$ weighted by the coefficients $\lambda(\mu_t^i)$.
In this paper, we show in Theorem \ref{t:ac} the existence of solutions to the consensus multi-agent system satisfying the MDEs 
\begin{equation}
\begin{cases}
\overline \mu_t = \underset{\nu \in \mathcal{P}(\Omega)}{\textup{argmin}} \sum_{j=1}^N \lambda(\mu_t^j)\, W_2^2(\mu^j_t , \nu) \\
\frac{d}{dt}{\mu}_t^i  +\mathrm{div}\left[ \left( T_{\mu^i_t}^{\overline{\mu}_t}(x) - x \right) \mu_t^i(dx)\right]=0 \quad \forall\; i = 1,\dots,N
\end{cases}
\label{eq:introMDE}
\end{equation}
in the sense of distributions, for compactly supported and absolutely continuous initial agents $\mu_0^i\in \mathcal{P}_2(\Rd)$, $i = 1, \dots,N$.
More specifically, we construct solutions starting from an explicit, time-discrete iteration.
This is particularly relevant as we are interested in the mathematical modelling of numerical algorithms based on computational agents.

Then, we extend the model to arbitrary measures which may not be absolutely continuous and substitute transport maps with measures that are solutions to the optimal transport problem that may, in general, not be unique.
Let $\Gamma_o(\mu_t^i, \overline{\mu}_t)$ be the set of optimal couplings for the quadratic cost.
Consider the projections $\pi_0,\pi_1$ defined as $\pi_0 (x,y) = x$ and $\pi_1(x,y) = y$ for any $x,y\in \Rd$. For compactly supported initial agents $\mu^i_0 \in\mathcal{P}_2(\Rd)$, $i =1,\dots,N$ we also show the existence of solutions to the system of MDIs \begin{equation}
\begin{cases}
\overline \mu_t \in \underset{\nu \in \mathcal{P}(\Omega)}{\textup{argmin}} \sum_{j=1}^N \lambda(\mu_t^j)\, W_2^2(\mu^j_t , \nu) \\
\frac{d}{dt}{\mu}_t^i  +\mathrm{div}\left[ \left( y - x \right ) \gamma_t^i(dx,dy)\right]=0 \quad \textup{for some}\;\; \gamma_t^i \in \Gamma_o(\mu_t^i, \overline \mu_t), \quad \forall\; i = 1,\dots,N\,,
\end{cases}
\label{eq:introMDI}
\end{equation}
in the sense of Definition \ref{def:mdi}, see Theorem \ref{t:mdi} which is the main result of the paper.

In the time-discrete settings, similar dynamics have been proposed in \cite{doucet2014,doucet2021,gualandi2020, bullo2023} as distributed algorithms for the computation of Wasserstein barycenters. In \cite{doucet2014,doucet2021}, authors consider probability measure on $\RR$, and a directed communication graph. At every step, the algorithm substitutes each agent with a barycenter computed among its neighbours, but there is no concept of moving along geodesics. The dynamics suggested in \cite{gualandi2020,bullo2023}, instead, describe pairwise interaction between agents in $\mathcal{P}(\Rd)$, where the pairs of agents move along the geodesics connecting them. In \cite{bullo2023} those couples are picked by randomly selecting edges of a given weighted communication graph, while in \cite{gualandi2020} the interaction between the agents follows a given order.  To our knowledge, this paper represents the first attempt to provide a time-continuous mathematical model for multi-agent consensus dynamics in the 2-Wasserstein space.

As \eqref{eq:introMDE}, \eqref{eq:introMDI} do not rely upon a specific representation of $\mathcal{P}_2(\Rd)$, the dynamics can be simulated by using agents described by point clouds, histograms, kernel densities, or, in more complex settings, generative models \cite{goodfellow2014}. We note that modern optimal transport algorithms allow fast computation of barycenters and geodesics in such settings and even for relatively high-dimensional spaces \cite{cuturi2019computational,gero2023gencol}.

The paper is organized as follows. In Section \ref{sec:definitions} we recall basic notions related to Wasserstein distances and barycenter and recall the framework of MDE and MDI. We also show how reparametrized geodesics can be seen as solutions to MDIs. Next, we prove the existence of solutions to \eqref{eq:introMDE} (absolutely continuous case), in Section \ref{Sec:DiffInclusion}, and solutions to \eqref{eq:introMDI} (general case) in Section \ref{Sec:mainresult}. In Section \ref{sec:num} we present an application of the multi-agent system inspired by Consensus-Based Optimization methods \cite{pinnau2017consensus}. Final remarks, also on possible future research perspectives, are discussed in the last section.

\section{Preliminaries}
\label{sec:definitions}

\subsection{Wasserstein spaces}

Given a metric space $(M,d)$ we will denote with $\mathcal{M}_b^+(M)$ the space of positive bounded Borel measures on it, and we denote with $\mathcal{P}(M)$ the set of Borel probability measures supported over $M$, and with $\mathcal{P}_2(M)$ those with finite second moment.
For any $\d>0$ we indicate with $\mathcal{L}^\d$ the Lebesgue measure over the space $\Rd$ and, given a subset $E\subseteq\Rd$, we denote with $\mathcal{P}^{a.c.}(E)$ and $\mathcal{P}_2^{a.c.}(E)$ the subsets of $\mathcal{P} (E)$ and $\mathcal{P}_2 (E)$ respectively only containing measures which are absolutely continuous with respect to Lebesgue measure $\mathcal{L}^\d$.
Given $\textup n,\textup m>0$ and a couple of subsets $E_1\subseteq\RR^{\textup n}$ and $E_2\subseteq\RR^{\textup m}$, for any measurable function $\phi: E_1 \to E_2$, we define the push-forward
\begin{align*}
\phi_\#:\mathcal{P} (E_1)&\to \mathcal{P}(E_2)\\
\mu&\mapsto \mu\circ\phi^{-1}.
\end{align*}
We will also adopt the distributional notation and therefore for any $\varphi\in C^\infty_c(\Rd)$ we will write
\[
\langle\varphi,\mu\rangle = \int_{\Rd}\varphi(x)\,\mu(dx).
\]
Given a subset $\Omega\subseteq\Rd$ we will  view $\mathcal{M}^+_b(\Omega)$ as a subset of $\mathcal{M}^+_b(\Rd)$, and all its subsets mentioned above, by extending all measures in $\mathcal{M}^+_b(\Omega)$ by zero. Therefore, all $\mu\in \mathcal{M}^+_b(\Omega)$ will be $\mu\in \mathcal{M}^+_b(\Rd)$ with $\supp(\mu)\subseteq\Omega$.

We equip $\mathcal{P}_2(\Rd)$ with the $2$-Wasserstein distance:
\[ 
W_2(\mu, \nu) := \left(\underset{\gamma \in \Gamma(\mu, \nu)}{\min} \int_{\Omega \times \Omega} |x - y|^2 \gamma(dx,dy)\right)^{1/2} \quad \mu, \nu \in \mathcal{P}_2(\Omega)\,,
\]
where with $\Gamma(\mu, \nu)$ we denote the set of Borel probability measures having $\mu$ and $\nu$ as first and second marginal, respectively. The set of optimal couplings realizing the $2$-Wasserstein distance is denoted with $\Gamma_o(\mu, \nu)$. We recall that, in general, there is no unique optimal coupling between two measures, unless one of them admits a Lebesgue density \cite{santambrogio2015optimal}. In this case, the unique optimal coupling is given by $(\textup{Id},T_{\mu}^\nu)_{\#} \mu$, with $T_\mu^\nu:\Rd \to \Rd$ being the optimal transport map between $\mu$ and $\nu$, that is, $\left(T_{\mu}^\nu\right)_\#\mu=\nu $ and
\[
W_2(\mu, \nu)= \left(\int_\Omega |x-T_{\mu}^\nu(x)|^2\mu(dx)\right)^{1/2}.
\]
We will use the fact that the barycenter problem \eqref{eq:Nbary} is related to a multi-marginal optimal transport problem and so we introduce a more general notation for $\Gamma$.
For any $N\in\N$ and $\mu_i\in\mathcal{P}_2(\Rd)$ for all $i=1,\dots,N$, we define
\[
\Gamma(\mu_1,\dots,\mu_N)=\left\{\gamma\in\mathcal{P}\left((\Rd)^N\right)\,\middle|\, (p_i)_\#\gamma=\mu_i,\, i=1,\dots,N\right\}\,,
\]
where $p_i: (\Rd)^N \to \Rd$ denotes the projection to the $i$-th coordinate. 
We recall the definition of metric barycenters first introduced by Fr\'echet in the context of random variables on metric spaces \cite{frechet1948elements} and more recently studied in the context of Wasserstein spaces in \cite{carlier2011bary}. We will localize to 2-Wasserstein spaces thus defining the barycenter of an arbitrary probability measure $\mathbb{P} \in \mathcal{P}(\mathcal{P}_2(\Rd))$.
\begin{definition} 
Given a probability measure $\mathbb{P} \in \mathcal{P}(\mathcal{P}_2(\Rd))$ the 2-Wasserstein barycenters are the solutions to the minimization problem
\begin{equation}
\mu_{\mathbb P} \in \underset{\nu \in \mathcal{P}_2(\Omega)}{\textup{argmin}}  \int_{\mathcal{P}_2(\Omega)} W_2^2(\mu, \nu) \mathbb P(d\mu)\,.
\end{equation}
\end{definition}
We note that the set of solutions to the minimization in the barycenter problem is invariant under multiplication of the functional by constants, therefore in this paper we will adopt the following slightly more general definition.

\begin{definition} \label{def:bary}
For $\mathbb{Q} \in\mathcal{M}_b^+(\mathcal{P}_2(\Rd))$ a positive, bounded measure over $\mathcal{P}_2(\Rd)$, the set of 2-Wassersetein barycenters is given by
\begin{equation}
\Bar(\mathbb Q) := \left \{\mu_{\mathbb Q}\in \mathcal{P}_2(\Rd)\;\middle|\; \mu_{\mathbb Q} \in \underset{\nu \in \mathcal{P}_2(\Rd)}{\textup{argmin}}  \int_{\mathcal{P}_2(\Rd)} W_2^2(\mu, \nu)\, \mathbb Q(d\mu)\right \}.
\end{equation}
\end{definition}
Thanks to the above definition, for a given $\mathbb P \in \mathcal{P}(\mathcal{P}_2(\Rd))$, and for any non-zero $\lambda\in C_b( \mathcal{P}(\Rd) , (0, \infty))$, the set of weighted barycenters is denoted by
\[
\Bar(\lambda \mathbb P) = \left \{\mu_{\lambda\mathbb P}\in \mathcal{P}_2(\Rd)\;\middle|\; \mu_{\lambda\mathbb P} \in \underset{\nu \in \mathcal{P}_2(\Rd)}{\textup{argmin}}  \int_{\mathcal{P}_2(\Rd)} W_2^2(\mu, \nu)\lambda(\mu)\, \mathbb P(d\mu)\right \}\,.
\]
Uniqueness of the barycenter is guaranteed in presence of absolutely continuous measure, that is, if $\mathbb{P}(\mathcal{P}_2^{a.c.})>0$.
For more details on Wasserstein barycenters and their properties, we refer to \cite{carlier2011bary,carlier2022quantitative,le2017existence}.

Finally, we will be working with sets of $N\in \mathbb{N}$ agents $\mu^i \in \mathcal{P}_2(\Rd)$ for $i =1, \dots, N$, which we will also identify to a measure $\mu\in \mathcal{P}_2(\RR^{\d N})$ as $\mu=\bigotimes_{i=1}^N\mu^i$. For any $\mu\in \mathcal{P}_2(\RR^{\d N})$ we introduce the associated empirical measure $\mathbb{P}_{\mu} \in \mathcal{P}(\mathcal{P}_2(\Rd))$
\begin{equation}
\mathbb{P}_{\mu} := \frac 1N \sum_{i=1}^N \delta_{\mu^i}
\label{eq:empirical}
\end{equation}
where $\mu^i = (p_i)_\#\mu$ is the $i$-th marginal of $\mu$. Thanks to the notation we introduced, the set of weighted barycenters among $N$ agents \eqref{eq:Nbary} can be simply denoted by $\Bar(\lambda \mathbb{P}_{\mu})$.


\subsection{Measure differential equations and inclusions}

We begin this section by recalling definitions presented in \cite{piccoli2019measure}.
Let $\T\Rd=\Rd\times\Rd$ denote the tangent bundle of $\Rd$ and $p_1:\T\Rd \to \Rd$ the projection to its base, $p_1(x,v) = x$.
On $\T\Rd$ we will consider the Euclidean metric.

A vector field on $\Rd$ is a map $X:\Rd\to\T\Rd$ such that $p_1\circ X(x)=x$ for all $x\in\Rd$. A Probability Vector Field (PVF) is a map
\[
V:\mathcal{P}_2(\Rd) \to \mathcal{P}_2(\T\Rd) 
\] 
such that $(p_1)_\#V[\mu] = \mu$ for all $\mu\in\mathcal{P}_2(\Rd)$.
We view measures $W\in\mathcal{P}_2(\T\Rd)$ as distributions of order one through the operator $T_W$
\begin{align*}
T_W:C^\infty_c(\Rd)&\to \RR\\
\varphi&\mapsto \int_{\T\Rd}v\cdot\nabla\varphi(x) W(dx,dv)\addtocounter{equation}{1}\tag{\theequation}\label{eq:distribution}
\end{align*}
which is continuous on $C^1_c(\Rd)$ since 
\[
|T_W(\varphi)|\leq \|\nabla\varphi\|_\infty\left(\int_{\T\Rd}|v|^2W(dx,dv)\right)^{1/2}.
\]
We will use this to define the notation 
\[
\langle\nabla\varphi,W\rangle:= T_W(\varphi).
\]
Due to the above definition of a PVF we consider integral curves  by the following definition.
\begin{definition}[Measure Differential Equation]
Let $V:\mathcal{P}_2(\Rd)\to\mathcal{P}_2(\T\Rd)$ be a PVF and $T>0$ a time horizon.
Given a measure $\tilde \mu \in\mathcal{P}_2(\Rd)$, the Cauchy problem defined by $V$ and $\tilde\mu $ is the equation
\begin{equation}
\left\{\begin{array}{ll}
\dot{\mu_t}  = V[\mu_t]\qquad\Lebo\mbox{-a.e. }t\in[0,T]\\
\mu_0=\tilde\mu\,,
\end{array}\right.
\label{eq:MDE}
\end{equation}
understood in distributional sense.

Consequently, we say that a curve $\mu\in C([0,T],\mathcal{P}_2(\Rd))$ is an integral curve for the PVF $V$ if it satisfies, for any $\varphi \in C_c^\infty(\Rd)$, the following
\begin{equation}
\frac{d}{dt}\int_\Rd \varphi(x) \mu_t(dx)= \int_{\T\Rd}v\cdot\nabla \varphi(x)\,V[\mu_t](dx,dv) \quad  \Lebo \mbox{-a.e. }t\in[0,T]
\label{eq:MDEweak}
\end{equation}
with $\lim_{t\to0}W_2(\mu_t,\tilde\mu)=0$.
We will refer to \eqref{eq:MDEweak} as a Measure Differential Equation (MDE).
\end{definition}
We note that, thanks to the notation introduced in the preceding definitions, we can shorten the equation in \eqref{eq:MDEweak} to
\[
\frac{d}{dt}\langle \varphi , \mu_t\rangle =\langle \nabla \varphi , V[\mu_t]\rangle 
\,.
\]

\begin{remark} Observe that  the integrand depends linearly on the variable $v\in \T_x\Rd$. Further, we have $(p_1)_\#V[\mu] = \mu$, and can disintegrate $V[\mu]$ along $p_1$ obtaining the existence of a measurable function $\nu^\mu:\Rd\to\mathcal{P}(\Rd)$ such that $V[\mu]=\nu^\mu_x\otimes\mu$.
We refer to \cite{ambrosio2000functions} for more details on the disintegration of measures. This allows us to define, for any $\mu\in\mathcal{P}_2(\Rd)$, a measurable function $X_{\mu}:\Rd\to\Rd$ through
\[
X_{\mu}(x)=\int_\Rd v\,\nu^\mu_x(dv)\, .
\]
This point of view allows us to write \eqref{eq:MDEweak} as
\[
\frac{d}{dt}\int_\Rd \varphi(x) \mu_t(dx)=\int_{ \Rd}X_{\mu_t}(x)\cdot\nabla \varphi(x)\, \mu_t (dx)\,
\]
which will be useful in Lemma \ref{lemma:system2one}.
We also point out how the distribution $T_V$ appearing in \eqref{eq:distribution} can be represented explicitly as the divergence of a current $\mathrm{div}(X (p_1)_\#V)$. 
\end{remark}

We aim to describe the multi-agent dynamics via MDEs  \cite{piccoli2019measure}. The instantaneous movement of each agent is determined by the geodesic connecting the agent to the barycenter of the ensemble.
Geodesics in the Wasserstein space are parametrized by the optimal couplings between the measures representing the agents and their barycenters. However, those couplings are, in general, not unique. To take this into account, we define a Multi-valued PVF and consider a Measure Differential Inclusion (MDI) \cite{piccoli2018mdi}.

A Multi-valued Probability Vector Field (MPVF) is a set-valued map
\begin{align*}
\mathcal{V}: \mathcal{P}_2(\Rd) 	\rightrightarrows \mathcal{P}_2(T\Rd)
\end{align*}
satisfying $(p_1)_\#V  = \mu$ for every $V \in \mathcal{V}[\mu]$ and every $\mu\in\mathcal{P}_2(\Rd)$.
For notational simplicity, we define for all $\varphi\in C^\infty_c(\Rd)$ and all $\mu\in\mathcal{P}_2(\Rd)$ 
\begin{align*}
\left\langle\nabla\varphi,\mathcal{V}[\mu]\right\rangle := \left \{ \int_{\T\Rd}\nabla\varphi(x)\cdot v\,V(dx,dv)\;:\; V \in \mathcal{V}[\mu] \right\}
\end{align*}
and consequently for any curve $\mu\in C\left([0,T],\mathcal{P}_2\left(\Rd\right)\right)$
\begin{align*}
\int_0^T\left\langle\nabla\varphi,\mathcal{V}[\mu_t]\right\rangle dt:= \left \{\int_0^T \int_{\T\Rd}\nabla\varphi(x)\cdot v\,V_t(dx,dv)dt\;:\; V_t \in \mathcal{V}[\mu_t]\;\mbox{measurable} \right\}.
\end{align*}

This framework has been studied in \cite{piccoli2018mdi,bonnet2021diff,savare2023diss}. However, the multi-agent system we aim to describe will provide us only with upper semi-continuous MPVF. Therefore, we cannot apply the well-posedness results provided in \cite{piccoli2018mdi}, which considers continuous MPVF.

We recall the definition of upper semi-continuity for multivalued functions and MDIs.
\begin{definition}[Upper semi-continuity] \label{def:uppersemi}
We say that a MPVF $\mathcal{V}: \mathcal{P}_2(\Rd) \rightrightarrows \mathcal{P}_2(\T\Rd)$ is upper semi-continuous if its graph is closed, i.e., for any couple of sequences of measures $(\mu_n,\rho_n)\in\mathcal{P}_2(\Rd)\times \mathcal{P}_2(\T\Rd)$ such that $\mu_n\to \mu\in\mathcal{P}_2(\Rd)$ and $\rho_n\to \rho\in\mathcal{P}_2(\T\Rd)$ (in 2-Wasserstein metric), we have that if $\rho_n\in\mathcal{V}[\mu_n]$ then we must have $\rho\in\mathcal{V}[\mu]$.
\end{definition}

\begin{definition}[Measure Differential Inclusion] \label{def:mdi}
Let $\mathcal V: \mathcal{P}_2(\Rd)\rightrightarrows \mathcal{P}_2(T\Rd)$ be a MPVF and $T>0$ a time horizon.
Given a measure $\tilde\mu \in\mathcal{P}_2(\Rd)$, we define the Cauchy problem as the following distributional inclusion
\begin{equation}
\left\{\begin{array}{ll}
\dot{\mu}_t  \in \mathcal{V}[\mu_t] \quad \Lebo\mbox{-a.e. } t\in[0,T]\\
\mu_0=\tilde{\mu}\,.
\end{array}\right.
\label{eq:MDI_simple}
\end{equation}
A curve $\mu\in C([0,T],\mathcal{P}_2(\Rd))$ is a solution for the Cauchy problem defined by $\mathcal {V}$ and $\tilde \mu$ if it satisfies for every $\varphi \in C_c^\infty(\Rd)$ and almost every $t\in[0,T]$
\begin{equation}
\frac{d}{dt}\int_\Rd \varphi(x)\mu_t(dx)\; \in \;\left \{ \int_{T\Rd}(v\cdot\nabla \varphi(x))\,V(dx,dv)\;:\; V \in \mathcal{V}[\mu_t] \right\}\,.
\label{eq:MDIweak}
\end{equation}
\end{definition}

We will also write the inclusion \eqref{eq:MDIweak} for almost every $t\in[0,T]$ as
\[
\frac{d}{dt}\left\langle\varphi,\mu_t\right\rangle\in\left\langle\nabla\varphi,\mathcal{V}[\mu_t]\right\rangle 
\]
or, for any $t\in[0,T]$,
\[
\left\langle\varphi,\mu_t- \tilde \mu \right\rangle \in\int_0^t\left\langle\nabla\varphi,\mathcal{V}[\mu_t]\right\rangle\,dt\, .
\]

The Wasserstein metric behaves like a strong norm with respect to the weak topology, thanks to  \cite{hille2021equivalence}. For any two measures $\mu,\nu\in \mathcal{P}_2(\Rd)$ and $\varphi\in C^\infty_c(\Rd)$ we have therefore the inequality
\[
\left|\langle\varphi,\mu-\nu\rangle\right|\leq \|\nabla\varphi\|_\infty W_2(\mu,\nu)\,.
\]
In a similar fashion, we have the following Lemma.

\begin{lemma}\label{Lem:VectorFieldBound} 
Given two measures $V, W\in\mathcal{P}_2(\T\Rd)$, then it holds for some $C>0$
\[
\left|\langle\nabla\varphi,V -W\rangle\right|\leq C\|\nabla\varphi\|_{W^{1,\infty}} W_2(V,W)\,\quad \textup{for all} \;\;\varphi\in C^\infty_c(\Rd).
\]

\end{lemma}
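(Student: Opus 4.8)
The plan is to reduce the pairing to the integral of the single test function $g(x,v):=v\cdot\nabla\varphi(x)$ against an optimal transport plan and to estimate $g$ along that plan. By the definition of the distribution in~\eqref{eq:distribution},
\[
\langle\nabla\varphi,V-W\rangle=\int_{\T\Rd}g\,dV-\int_{\T\Rd}g\,dW\,,
\]
so if $\sigma\in\Gamma_o(V,W)$ is an optimal coupling for $W_2$ on $\T\Rd$ (whose metric is Euclidean, giving $|(x,v)-(x',v')|^2=|x-x'|^2+|v-v'|^2$), then $\sigma$ has marginals $V$ and $W$ and
\[
\langle\nabla\varphi,V-W\rangle=\int \big(g(x,v)-g(x',v')\big)\,\sigma\big(d(x,v),d(x',v')\big)\,.
\]
The main obstacle is that $g$ grows \emph{linearly} in the velocity variable $v$ and is therefore not globally Lipschitz on $\T\Rd$; consequently the Kantorovich--Rubinstein duality cannot be invoked with a fixed Lipschitz constant, and the linear growth must instead be absorbed into the finite second moment of the measures.

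To this end I would estimate the integrand pointwise. Adding and subtracting $v'\cdot\nabla\varphi(x)$ and using the mean value inequality for $\nabla\varphi$ gives
\[
\big|g(x,v)-g(x',v')\big|\leq \|\nabla\varphi\|_\infty\,|v-v'|+\|\nabla^2\varphi\|_\infty\,|v'|\,|x-x'|\,.
\]
The first term is linear in the transport displacement and is harmless; the second carries the troublesome factor $|v'|$, which is unbounded but square-integrable against the second marginal $W$.

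Integrating this bound against the probability measure $\sigma$ and applying Cauchy--Schwarz termwise, I would use $|v-v'|\leq |(x,v)-(x',v')|$ and $|x-x'|\leq |(x,v)-(x',v')|$ to obtain $\int|v-v'|\,d\sigma\leq W_2(V,W)$ and $\big(\int|x-x'|^2\,d\sigma\big)^{1/2}\leq W_2(V,W)$, while $\big(\int|v'|^2\,d\sigma\big)^{1/2}=\big(\int_{\T\Rd}|v|^2\,W(dx,dv)\big)^{1/2}=:M_W<\infty$ since $W\in\mathcal{P}_2(\T\Rd)$. Collecting the two contributions yields
\[
\big|\langle\nabla\varphi,V-W\rangle\big|\leq \big(\|\nabla\varphi\|_\infty+M_W\,\|\nabla^2\varphi\|_\infty\big)\,W_2(V,W)\leq (1+M_W)\,\|\nabla\varphi\|_{W^{1,\infty}}\,W_2(V,W)\,,
\]
which is the claim with $C=1+M_W$. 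I expect this constant to depend on the second moment of $W$ (equivalently, by the symmetric choice of splitting, on that of $V$); this dependence is harmless for the intended applications, where the relevant PVF values have uniformly bounded velocity moments because the agents remain in a fixed compact region.
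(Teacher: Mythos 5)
Your proof is correct and follows essentially the same route as the paper: couple $V$ and $W$ optimally, split the integrand by adding and subtracting a cross term, and apply Cauchy--Schwarz so that the unbounded velocity factor is absorbed by a second moment. The only (cosmetic) difference is which cross term you subtract, so your constant involves the second moment of $W$ where the paper's involves that of $V$ --- a symmetric choice with no consequence.
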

\begin{proof}
Let us consider $\Sigma\in\Gamma_o(V,W)$ and compute
\begin{align*}
\left|\langle\nabla\varphi,V-W\rangle\right|&=\left|\int_{T\Rd}\nabla\varphi(x)\cdot v \,(V-W)(dx,dv)\right|\\
&=\left|\int_{\T\Rd\times\T\Rd}\nabla\varphi(x)\cdot v-\nabla\varphi(y)\cdot w \,\Sigma(dx,dv,dy,dw)\right|\\
&=\left|\int_{\T\Rd\times\T\Rd}(\nabla\varphi(x)-\nabla\varphi(y))\cdot v+\nabla\varphi(y)\cdot(v- w) \,\Sigma(dx,dv,dy,dw)\right|\\
&\leq \int_{\T\Rd\times\T\Rd}\left|\nabla\varphi(x)-\nabla\varphi(y)\right|\left|v\right|+\left|\nabla\varphi(y)\right|\left|v- w\right| \,\Sigma(dx,dv,dy,dw) \\
&\leq \int_{\T\Rd\times\T\Rd}\|\nabla^2\varphi\|_\infty\left| x - y \right|\left|v\right|+\left\|\nabla\varphi\right\|_\infty\left|v- w\right| \,\Sigma(dx,dv,dy,dw) \\
&\leq \|\nabla^2\varphi\|_\infty \left(\int |x-y|^2\Sigma (dx,dv,dy,dw) \right)^{1/2}\left(\int_{\T\Rd}|v|^2V(dx,dv) \right)^{1/2} \\
& \qquad +\|\nabla\varphi\|_\infty \left(\int |v-w|^2\Sigma(dx,dv,dy,dw) \right)^{1/2}.
\end{align*}
Let
\[
C=\max\left\{1,\left(\int_{\T\Rd}|v|^2V (dx,dv)\right)^{1/2}\right\},
\]
we conclude that
\[
\left|\langle\nabla\varphi,V-W\rangle\right|\leq C\|\nabla\varphi\|_{W^{1,\infty}(\Rd)}W_2(V,W).
\]
\end{proof}

\subsection{Geodesics are solutions to MDIs}\label{SubSec:GeodesicSolutions}
Consider two measures $\mu_0$, $\overline{\mu}\in \mathcal{P}_2(\Rd)$ and the convex combinations $\pi_\tau(x,y) :=(1 - \tau)x + \tau y$ for all $\tau \in [0,1]$ and $x,y\in\Rd$.
We fix $\gamma\in \Gamma_o(\mu_0, \overline{\mu})$ which induces the geodesic
\[
\mu_\tau : = (\pi_\tau)_\#\gamma
\]
between $\mu_0$ and $\overline{\mu}$. 

The time-rescaled geodesics 
\[\nu_{t} := \mu_{1-e^{-t}}\] 
between $\mu_0$ and $\overline{\mu}$ in a solution in the sense of Definition \ref{def:mdi} to the Cauchy problem
\[
\dot{\nu}_t \in (\pi_0, \pi_1 - \pi_0)_\#\Gamma_o(\nu_t, \overline{\mu})\,, \qquad \nu_0 = \mu_0\,.
\]  
Indeed, direct computations for $\varphi\in C_c^\infty(\Rd)$ leads to
\begin{align*}
\frac{d}{dt}\int_\Rd \varphi(x) \nu_t(dx)  &=\frac{d}{dt}\int_\Rd \varphi (x) \,\mu_{1-e^{-t}}(dx) \\
&=\frac{d}{dt}\int_{\Rd\times\Rd} \varphi(x+(y-x)(1-e^{-t}))\,\gamma(dx,dy)\\
&= \int_{\Rd\times\Rd} e^{-t} (y-x) \cdot\nabla \varphi(x+(y-x)(1-e^{-t}))\,\gamma(dx,dy)\,.
\end{align*}
Now, to push-forward $\gamma$ to a measure $\gamma_t\in \Gamma_o(\mu_{1-e^{-t}},\overline \mu)$, we perform the change of variable $(x,y)\mapsto (x+(y-x)(1-e^{-t}),y)$. Then, we apply the change of variable given by $(\pi_0, \pi_1 - \pi_0)$, so that 
\[(\pi_0, \pi_1 - \pi_0)_\#\gamma_t \in (\pi_0, \pi_1 - \pi_0)_\#\Gamma_o(\mu_{1-e^{-t}}, \overline{\mu})\]
to obtain
\begin{align*}
\frac{d}{dt}\int_\Rd \varphi(x) \nu_t(dx)  &= \int_{\Rd\times \Rd} (y- x)\cdot  \nabla \varphi (x) \,\gamma_t(dx,dy)\\
&= \int_{\T\Rd} v\cdot\nabla \varphi (x)\, (\pi_0, \pi_1 - \pi_0)_\#\gamma_t(dx,dv)\,.
\end{align*}

\section{Multi-agent consensus dynamics}
\label{Sec:DiffInclusion}
In the following, we start by formulating dynamics \eqref{eq:introMDE}, \eqref{eq:introMDI} as a MDI on the space $\mathcal{P}_2((\Rd)^N)$. Then, we present a time-discrete approximation scheme to construct solutions.

\subsection{MDI formulation of the dynamics}
We reformulate the problem of $N$ agents by considering an equation for the ensemble in $\mathcal{P}_2((\Rd)^N)$.
Recall $p_i: (\Rd)^N \to \Rd$ denote the projection on the $i$-th component.
For any $\mu \in \mathcal{P}((\Rd)^N)$, we will denote with $\mu^i = (p_i)_\#\mu$ their push-forward on each component.
Given a function $\lambda \in C_b(\mathcal{P}_{2}(\Rd), (0,\infty))$ the barycenter of the ensemble is computed via the weights $\lambda(\mu_t^i)$.
The case of an unweighed barycenter corresponds to the case $\lambda = 1$.

Let $\mathcal{V}: \mathcal{P}_2((\Rd)^N) \rightrightarrows \mathcal{P}_2(\T(\Rd)^N)$ be the MPVF given by
\begin{equation} \label{def:V}
\mathcal{V}[\mu]:= \bigotimes_{i=1}^N (\pi_0, \pi_1 - \pi_0)_\#  \Gamma_o\left(\mu^i, \Bar\left( \lambda \mathbb{P}_\mu\right)\right)\,,
\end{equation}
and we will refer to it throughout the rest of the paper as  $\mathcal{V}$.

\begin{proposition}
\label{p:uppersemi}
The MPVF $\mathcal{V}$ given by \eqref{def:V} is upper semi-continuous.
\end{proposition}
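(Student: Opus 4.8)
The plan is to verify the closed-graph characterisation of upper semi-continuity from Definition~\ref{def:uppersemi} directly. Fix sequences $\mu_n\to\mu$ in $W_2((\Rd)^N)$ and $\rho_n\to\rho$ in $W_2(\T(\Rd)^N)$ with $\rho_n\in\mathcal{V}[\mu_n]$; the goal is to show $\rho\in\mathcal{V}[\mu]$. By the definition \eqref{def:V} of $\mathcal{V}$, each $\rho_n$ factors as $\rho_n=\bigotimes_{i=1}^N(\pi_0,\pi_1-\pi_0)_\#\gamma_n^i$ for some barycenter $\overline\mu_n\in\Bar(\lambda\mathbb{P}_{\mu_n})$ and optimal plans $\gamma_n^i\in\Gamma_o(\mu_n^i,\overline\mu_n)$. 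Since the coordinate projections are $1$-Lipschitz for $W_2$, we have $\mu_n^i\to\mu^i$ for every $i$. The proof then rests on three stability facts, which I would chain together along a common subsequence: (i)~relative $W_2$-compactness of the barycenters $\overline\mu_n$, with every limit point lying in $\Bar(\lambda\mathbb{P}_\mu)$; (ii)~relative $W_2$-compactness of the plans $\gamma_n^i$, with every limit point in $\Gamma_o(\mu^i,\overline\mu)$; and (iii)~continuity of the push-forward by the affine map $(\pi_0,\pi_1-\pi_0)$ and of the tensor product.

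For~(i), which I expect to be the crux, first test the barycenter functional $F_n(\nu):=\frac1N\sum_i\lambda(\mu_n^i)\,W_2^2(\mu_n^i,\nu)$ at $\nu=\mu_n^1$: this gives $F_n(\overline\mu_n)\le F_n(\mu_n^1)$, and the right-hand side is bounded because $\lambda$ is bounded and $\{\mu_n^i\}$ is $W_2$-precompact. Since $\lambda(\mu_n^i)\to\lambda(\mu^i)>0$, the weights are bounded below by some $c>0$ for $n$ large, so $W_2(\mu_n^1,\overline\mu_n)$ stays bounded; together with the precompactness of $\{\mu_n^1\}$ this yields a uniform second-moment bound and, after a little more work, tightness and uniform integrability of the squared distance, hence $W_2$-relative compactness of $\{\overline\mu_n\}$. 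Passing to a subsequence with $\overline\mu_n\to\overline\mu$, I would identify $\overline\mu$ as a minimiser of $F(\nu):=\frac1N\sum_i\lambda(\mu^i)W_2^2(\mu^i,\nu)$: the triangle inequality $|W_2(\mu_n^i,\nu)-W_2(\mu^i,\nu)|\le W_2(\mu_n^i,\mu^i)$ gives convergence $F_n\to F$ that is uniform on any set of bounded second moment, and combined with the continuity of $\nu\mapsto W_2^2(\mu^i,\nu)$ this lets me pass to the limit in $F_n(\overline\mu_n)\le F_n(\nu)$ and obtain $F(\overline\mu)\le F(\nu)$ for every competitor $\nu$, i.e. $\overline\mu\in\Bar(\lambda\mathbb{P}_\mu)$ by Definition~\ref{def:bary}.

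Step~(ii) is the classical stability of optimal transport: with $\mu_n^i\to\mu^i$ and $\overline\mu_n\to\overline\mu$ in $W_2$, the plans $\gamma_n^i\in\Gamma_o(\mu_n^i,\overline\mu_n)$ are tight with uniformly integrable cost, so along a further subsequence $\gamma_n^i\to\gamma^i$ in $W_2$ with $\gamma^i\in\Gamma_o(\mu^i,\overline\mu)$. For~(iii), the map $(\pi_0,\pi_1-\pi_0)$ is affine, hence Lipschitz, so $(\pi_0,\pi_1-\pi_0)_\#\gamma_n^i\to(\pi_0,\pi_1-\pi_0)_\#\gamma^i$ in $W_2$; and since $W_2^2(\bigotimes_i\sigma_n^i,\bigotimes_i\sigma^i)\le\sum_iW_2^2(\sigma_n^i,\sigma^i)$ (taking the product of optimal couplings), the tensor product is $W_2$-continuous. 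Therefore, along the chosen subsequence, $\rho_n\to\bigotimes_{i=1}^N(\pi_0,\pi_1-\pi_0)_\#\gamma^i=:\tilde\rho\in\mathcal{V}[\mu]$. Since also $\rho_n\to\rho$ along this subsequence and $W_2$-limits are unique, $\rho=\tilde\rho\in\mathcal{V}[\mu]$, which proves the claim. The only genuinely delicate point is the compactness and limit identification of the barycenters in~(i); everything else reduces to standard stability of optimal plans together with continuity of elementary operations.
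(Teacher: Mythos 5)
Your argument is correct in substance, but it takes a genuinely different route from the paper's. The paper never touches a converging sequence of barycenters directly: Appendix \ref{appendix} builds abstract set-valued machinery --- compactness of the coupling sets $\Gamma(\mu,\nu)$, continuity of $(\mu,\nu)\mapsto\Gamma(\mu,\nu)$ in the Hausdorff metric, lower semi-continuity of $K\mapsto\min_{K}f$, hence upper semi-continuity of $\Gamma_o$ for an \emph{arbitrary} continuous cost, including the multi-marginal case --- and then concludes by composition, quoting that $\Bar(\cdot)$ is upper semi-continuous because barycenters solve a multi-marginal optimal transport problem. You instead verify the closed-graph property of Definition \ref{def:uppersemi} by hand: precompactness of the barycenters via the variational inequality $F_n(\overline\mu_n)\le F_n(\mu_n^1)$ and the positive lower bound on the weights, identification of any limit point as an element of $\Bar(\lambda\mathbb{P}_\mu)$ by passing to the limit in $F_n(\overline\mu_n)\le F_n(\nu)$, then classical stability of optimal plans and continuity of push-forward and tensor product. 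The paper's route buys modularity and generality (reusable lemmas valid for any continuous cost); yours buys a self-contained, explicit proof of barycenter stability in place of a citation.

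Two points need repair, though neither threatens the structure. First, by the definition \eqref{def:V} (and the paper's own remark on non-uniqueness of solutions), an element of $\mathcal{V}[\mu_n]$ may combine \emph{different} barycenters for different agents, so your factorization should read $\rho_n=\bigotimes_{i}(\pi_0,\pi_1-\pi_0)_\#\gamma_n^i$ with $\gamma_n^i\in\Gamma_o(\mu_n^i,\overline\mu_n^{\,i})$ and $\overline\mu_n^{\,i}\in\Bar(\lambda\mathbb{P}_{\mu_n})$ possibly depending on $i$; the fix is merely to run your step (i) separately in each index $i$ along a common subsequence. Second, the step you flag as ``a little more work'' --- upgrading the uniform second-moment bound on $\{\overline\mu_n\}$ to $W_2$-relative compactness --- genuinely requires uniform integrability of the second moments, which does not follow from the moment bound alone. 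The cleanest way to obtain it is exactly the fact the paper leans on: every barycenter with weights $\tilde\lambda_n^i$ is the push-forward of an optimal multi-marginal plan $\gamma_n\in\Gamma(\mu_n^1,\dots,\mu_n^N)$ under the weighted mean map $x\mapsto\sum_i\tilde\lambda_n^i x_i$, so, using $\bigl|\sum_i\tilde\lambda_n^i x_i\bigr|^2\le\sum_i\tilde\lambda_n^i|x_i|^2$ and the inclusion of $\bigl\{\bigl|\sum_i\tilde\lambda_n^i x_i\bigr|>R\bigr\}$ in $\bigcup_i\{|x_i|>R\}$, uniform integrability transfers from the marginals $\mu_n^i$, which possess it because they converge in $W_2$. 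With these two insertions your proof is complete.
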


The proof is provided in Appendix \ref{appendix}. 
We are interested in studying the existence of solutions to the MDI 
\begin{equation}
 \dot{\mu}_t \in \mathcal{V}[\mu_t]\quad \mbox{a.e. }t\in[0,T],
\label{eq:MDI}
\end{equation}
supplemented by initial condition $\mu_0 = \otimes_{i=1}^N \mu_0^i$, $\mu_0^i \in{\mathcal{P}_2(\Rd)}$.
Recall by Definition \ref{def:mdi}, that a solution to \eqref{eq:MDI} is a curve $\mu\in \Lip([0,T],\mathcal{P}_2((\Rd)^N))$ such that for any $\phi \in C_c^\infty\left((\Rd)^N\right)$
\begin{equation}
\frac{d}{dt} \int_{(\Rd)^N} \phi(x) \mu_t(dx) \in \int _{(T\Rd)^N} (y-x)\cdot \nabla \phi(x) \mathcal{V}[\mu_t](dx,dv)\,.
\label{eq:MDI_weak}
\end{equation}

To underline the relation with the multi-agent system introduced in \eqref{eq:introMDI}, we establish in the Lemma below the equivalence between the above inclusion and the following system of inclusions 
\begin{equation}
\frac{d}{dt} \int_{\Rd} \psi(x) \mu_t^i(dx) =\int _{\Rd \times \Rd} (y-x)\cdot \nabla \psi(x) \,\gamma^i_t(dx,dy) \qquad i = 1, \dots,N
\label{eq:MDI_N_weak}
\end{equation}
for some $\gamma_t^i \in \Gamma_o\left(\mu_t^i, \Bar( \lambda \mathbb{P}_{\mu_t})\right)$,  and any $\psi\in C^\infty_c(\Rd)$.

\begin{lemma} 
\label{lemma:system2one}
Let $\mu_0 = \otimes_{i=1}^N \mu_0^i$, $\mu_0^i \in{\mathcal{P}_2(\Rd)}$.
Then, $\mu\in \Lip([0,T],\mathcal{P}_2((\Rd)^N))$ satisfies  \eqref{eq:MDI_weak} if and only if $\mu_t^i = (p_i)_\#\mu_t \in \Lip([0,T],\mathcal{P}_2(\Rd))$ satisfies \eqref{eq:MDI_N_weak} for all $i =1,\dots,N$.
\end{lemma}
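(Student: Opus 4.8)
The plan is to exploit the product structure of $\mathcal{V}$ and reduce the single inclusion \eqref{eq:MDI_weak} on $(\Rd)^N$ to the $N$ component inclusions \eqref{eq:MDI_N_weak} by testing against functions of a single variable. First recall that every element of $\mathcal{V}[\mu]$ factors as $V=\bigotimes_{i=1}^N V^i$ with $V^i=(\pi_0,\pi_1-\pi_0)_\#\gamma^i$ and $\gamma^i\in\Gamma_o(\mu^i,\overline\mu)$, where $\overline\mu=\Bar(\lambda\mathbb{P}_\mu)$ depends on $\mu$ only through its marginals, since $\mathbb{P}_\mu=\frac1N\sum_i\delta_{\mu^i}$. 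In particular $(p_1)_\#V=\bigotimes_{i=1}^N\mu^i$, so the velocity field is decoupled across the $N$ blocks; together with the product initial datum $\mu_0=\bigotimes_i\mu_0^i$ this keeps any solution of \eqref{eq:MDI_weak} a product, $\mu_t=\bigotimes_{i=1}^N\mu_t^i$, a fact I would record and use throughout.

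For the implication \eqref{eq:MDI_weak}$\Rightarrow$\eqref{eq:MDI_N_weak}, I would fix $i\in\{1,\dots,N\}$ and $\psi\in C_c^\infty(\Rd)$ and test \eqref{eq:MDI_weak} against the compactly supported functions $\phi_R(x):=\psi(x_i)\prod_{j\neq i}\chi_R(x_j)$, where $\chi_R\in C_c^\infty(\Rd)$ satisfies $\chi_R\equiv1$ on $B_R$ and $\|\nabla\chi_R\|_\infty\leq C/R$. Writing $V_t=\bigotimes_k V_t^k$ for the selection furnished by \eqref{eq:MDI_weak}, the $i$-th block of $\langle\nabla\phi_R,V_t\rangle$ converges as $R\to\infty$ to $\int_{\T\Rd}v\cdot\nabla\psi(x)\,V_t^i(dx,dv)=\int_{\Rd\times\Rd}(y-x)\cdot\nabla\psi(x)\,\gamma_t^i(dx,dy)$, while each block with $j\neq i$ is bounded by $\|\psi\|_\infty\|\nabla\chi_R\|_\infty\int_{\T\Rd}|v|\,V_t^j(dx,dv)\to0$ using the finite first moment of $V_t^j\in\mathcal{P}_2(\T\Rd)$; the left-hand side converges to $\frac{d}{dt}\langle\psi,\mu_t^i\rangle$ by dominated convergence. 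Passing the limit through the time integral then yields \eqref{eq:MDI_N_weak} with the same $\gamma_t^i$.

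For the converse, given couplings $\gamma_t^i\in\Gamma_o(\mu_t^i,\overline\mu_t)$ realizing \eqref{eq:MDI_N_weak}, I would set $V_t:=\bigotimes_{i=1}^N(\pi_0,\pi_1-\pi_0)_\#\gamma_t^i\in\mathcal{V}[\mu_t]$ as a measurable selection, and check \eqref{eq:MDI_weak} first on product test functions $\phi(x)=\prod_{i=1}^N\psi_i(x_i)$. For such $\phi$ the Leibniz rule gives $\frac{d}{dt}\langle\phi,\mu_t\rangle=\sum_i\bigl(\frac{d}{dt}\langle\psi_i,\mu_t^i\rangle\bigr)\prod_{j\neq i}\langle\psi_j,\mu_t^j\rangle$; substituting \eqref{eq:MDI_N_weak} and using $(p_1)_\#V_t^j=\mu_t^j$ one recognizes the right-hand side exactly as $\langle\nabla\phi,V_t\rangle$. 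The identity then extends to all $\phi\in C_c^\infty((\Rd)^N)$ by density of the linear span of product functions, using $|\langle\varphi,\mu-\nu\rangle|\leq\|\nabla\varphi\|_\infty W_2(\mu,\nu)$ for the left-hand side and Lemma \ref{Lem:VectorFieldBound} for the right-hand side to control both sides in the relevant norms.

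The Leibniz expansion and moment estimates are routine; the genuine difficulties will be the two limiting arguments. In the forward direction one must justify commuting $R\to\infty$ with $\frac{d}{dt}$, which I would do by working with the equivalent time-integrated form of the inclusion and invoking the Lipschitz regularity of $t\mapsto\mu_t$ for uniform-in-$t$ control. In the converse direction the delicate point is the density and continuity argument extending the identity from product test functions to all of $C_c^\infty((\Rd)^N)$, together with verifying that a single measurable selection $t\mapsto\gamma_t^i$ serves every test function simultaneously; this is precisely where the product structure $\mu_t=\bigotimes_i\mu_t^i$ and the decoupled form of $V_t$ are essential.
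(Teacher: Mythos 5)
Your proposal is correct and follows essentially the same route as the paper's proof: product test functions plus the Leibniz rule and a density argument for passing from the system \eqref{eq:MDI_N_weak} to the joint inclusion \eqref{eq:MDI_weak}, and a cutoff argument in the time-integrated formulation (with the error term controlled by the moments of the selected velocity measures) for recovering the marginal equations from \eqref{eq:MDI_weak}. The only differences are cosmetic: you use a product of one-dimensional cutoffs $\prod_{j\neq i}\chi_R(x_j)$ with a first-moment bound, whereas the paper uses a single radial cutoff $\eta_n$ on $(\Rd)^N$ with a second-moment bound over annuli.
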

\begin{proof}
Let $\{\mu^i_t\}_{i=1}^N$ satisfy \eqref{eq:MDI_N_weak} and $\gamma^i_t$ be a transport plan connecting the  $i$-th agent to the barycenter of the measures, towards which the agent is moving.
Consider $\mu_t = \otimes_{i=1}^N \mu_t^i$ and a test function $\phi\in C_c^{\infty}((\Rd)^N)$ of the form $\phi(x) = \prod_{i=1}^N\psi_i(p_i(x))$ with $\psi_i\in C^\infty_c(\Rd), i = 1,\dots,N$. 
It holds
\begin{align*}
\frac{d}{dt} \int_{(\Rd)^N} \prod_{i=1}^N &\psi_i(p_i(x)) \mu_t(dx) =\frac{d}{dt} \int_{\Rd} \prod_{i=1}^N \psi_i(x_i) \left( \mu^1_t(dx_1)\dots \mu^N_t(dx_N)\right)\\
&= \sum_{j=1}^N\left(\prod_{i\neq j}\int_{\Rd } \psi_i(x)  \mu^i_t(dx) \right)\frac{d}{dt}\int_{\Rd} \psi_j(x)  \mu^j_t(dx)\\
&= \sum_{j=1}^N \left(\prod_{i\neq j}\int_{\Rd } \psi_i(x)  \mu^i_t(dx) \right) \int _{\Rd\times \Rd }(y-x)\cdot  \nabla\psi_j(x)  \,\gamma^j_t(dx,dy)\\
&= \sum_{j=1}^N \left(\prod_{i\neq j}\int_{\Rd } \psi_i(x)  \mu^i_t(dx) \right) \int _{T\Rd }v\cdot  \nabla\psi_j(x)  \,(\pi_0, \pi_1 - \pi_0)_\#\gamma^j_t(dx,dv)\\
&= \sum_{j=1}^N \left(\prod_{i\neq j}\int_{\Rd } \psi_i(x)  \mu^i_t(dx) \right) \int _{\Rd  }X_{\mu^j_t}(x)\cdot  \nabla\psi_j(x)  \, \mu^j_t(dx)\\
&= \int_{(\Rd)^N }\left( X_{\mu^1_t}(p_1(x)),\dots, X_{\mu^N_t}(p_N(x))\right) \cdot  \nabla  \prod_{j=1}^N\psi_j(p_j(x))  \, \mu_t(dx)\\
&\in \int _{(T\Rd)^N } v \cdot  \nabla  \prod_{j=1}^N\psi_j(p_j(x))  \, \mathcal{V}[\mu_t](dx, dv).
\end{align*}
Therefore, \eqref{eq:MDI_weak} holds for $\phi(x) = \prod_{i=1}^N\psi_i(p_i(x))$.
The result can be extended to all $\phi \in C^\infty_c((\Rd)^N)$ 
thanks to the dense embedding in the weak topology \cite[Theorem 15.4]{duistermaat2010distribution}:
\begin{align*}
C^\infty_c(\Rd)^N &\to C^\infty_c\left((\Rd)^N\right)\, \\
 (\psi_1, \dots, \psi_N) &\mapsto \prod_{i=1}^N \psi_i\,.
\end{align*}

Let now $\mu_t\in \Lip \left([0,T],(\Rd)^N\right)$ satisfy the equation \eqref{eq:MDI_weak} on $(\Rd)^N$. Furthermore, let $V[\mu_t]\in\mathcal{V}[\mu_t]$, with $V[\mu_t]=\delta_{( X_{\mu^1_t}(x),\dots, X_{\mu^N_t}(x))}\otimes\mu_t$ and $\gamma^j_t=(\pi_0, \pi_1 - \pi_0)^{-1}_\# V[\mu_t]$, be the selection of PVF and optimal plans, respectively. Then, we have $\dot{\mu}_t=V[\mu_t]$.
As before, consider $\psi_i\in C^\infty_c(\Rd)$ with $ i = 1,\dots, N$,
and $\eta\in C^\infty_c((\Rd)^N)$ such that $\supp(\eta)\subset B(0,2)$, $\eta\equiv 1$ on $B(0,1)$, and $\|\nabla\eta\|_\infty\leq 1$. 
Define the sequence $\{\eta_n\}_{n\geq 1}$
\[
\eta_n(x)=\eta(x/n) \quad \textup{with} \quad \supp(\eta_n) \subset B(0,2n)
\]
such that $\lim_{n \to \infty} \eta_n = 1$ point-wise.
We compute
\begin{multline*}
\left|\int_0^T\int_{\T(\Rd)^N}(\psi_j\circ p_j)(x)(v \cdot\nabla \eta_n(x ))V[\mu_t](dx,dv)dt\right|\leq 
\\
T\|\psi_j\|_\infty\left(\int_{\T\left(B(0,2n)\setminus B(0,n)\right)} |v|^2 V[\mu_t](dx,dv)\right)^{1/2}.
\end{multline*}
We therefore obtain
\[
\lim_n\int_0^T\int_{\T(\Rd)^N}(\psi_j\circ p_j)(x)(v \cdot\nabla \eta_n(x ))V[\mu_t](dx,dv)dt=0\,.
\]
For any $s\in[0,T]$ we have
\begin{align*}
\int_{\Rd}\psi_j(x)&\mu^j_s(dx)-\int_{\Rd} \psi_j(x)\mu^j_0(dx) \\
&=\lim_{n\to\infty}\int_{(\Rd)^N} \eta_n(x)(\psi_j\circ p_j)(x)\mu_s(dx)-\int_{(\Rd)^N}\eta_n(x) ( \psi_j\circ p_j)(x) \mu_0(dx) \\
&=\lim_n\int_0^T\int_{\T(\Rd)^N}v \cdot\nabla(\eta_n(x)(\psi_j\circ p_j)(x))V[\mu_t](dx,dv)dt \\
&=\lim_n\int_0^s\Big(\int (\psi_j \circ p_j)(x)(v \cdot\nabla \eta_n(x ))V[\mu_t](dx,dv) \\
&\qquad + \int\eta_n(x)(v \cdot\nabla (\psi_j\circ p_j)(x))V[\mu_t](dx,dv)\Big)dt \\
&=\int_0^s\int_{\T(\Rd)^N}  v \cdot\nabla (\psi_j\circ p_j)(x)V[\mu_t](dx,dv)dt\\
&=\int_0^s\int_{\Rd}  v \cdot\nabla \psi_j (x)V[\mu^j_t](dx,dv)dt\,.
\end{align*}
This allows us to conclude and proves the equivalence of the two formulations \eqref{eq:MDI_weak} and \eqref{eq:MDI_N_weak}.
\end{proof}

\begin{remark}
Due to Lemma \ref{lemma:system2one}, the evolution described by the MDI \eqref{eq:MDI} does not depend on the particular initial conditions $\mu_0\in\mathcal{P}_2\left((\Rd)^N\right)$ but only on the marginals $\mu_0^1,\dots,\mu_0^N$.
For measures
$
\mu_0=\mu_0^1\otimes\dots\otimes\mu_0^N\in  \mathcal{P}_2\left((\Rd)^N\right)
$
we have solutions $\mu_t\in\Gamma(\mu_t^1,\dots,\mu_t^N)$ with no (optimal) coupling conditions required. 
\end{remark}

\subsection{Consensus dynamics for absolutely continuous agents}

To investigate existence of solutions to \eqref{eq:MDI}, we start by considering the case of absolutely continuous agents.
In particular we will considered absolutely continuous agents with bounded densities, that is,  $\mu_0^i \in \mathcal{P}_2^{a.c.}\left( \Rd \right)\cap L^\infty(\mathcal{L}^\d)$ for all $i = 1,\dots,N$.

\subsubsection{Iterative scheme}

We construct solutions to  \eqref{eq:MDI} starting by an explicit Euler approximation of the multi-agent dynamics. Given a fixed time horizon $T\in(0,\infty)$, let $\tau \in (0,1)$ denote the time discretization and $k\in \mathbb{N}$, $k\tau \leq T$ the corresponding iterations.
Starting from $\mu_{(0)}^i = \mu_{0}^i\in \mathcal{P}^{a.c.}_2(\Rd)\cap L^\infty(\mathcal{L}^\d)$ the agents' evolution is iteratively defined by the scheme
\begin{equation}
\begin{cases}
\overline \mu_{(k)} = \Bar(\lambda \mathbb{P}_{\mu_{(k)}}) \\
\mu_{(k+1)}^i  =  \left((1-\tau)\Id + \tau T_{\mu^i_{(k)}}^{\overline{\mu}_{(k)}}\right )_\#\mu_{(k)}^i \qquad i = 1,\dots,N\,.
\end{cases}
\label{eq:iterativeMDE}
\end{equation}
As before,  $\mu_{(k)} = \otimes_{i=1}^N \mu_{(k)}^i \in \mathcal{P}_2((\Rd)^N)$ denotes the collection of agents at time $k\tau \in [0,T]$.

The following lemma ensures well-posedness of the scheme and  important property regarding the agents' support.

\begin{lemma}\label{lem:SolutionsStayInsideCompact}
Assume the initial agents $\mu_{0}^i \in \mathcal{P}_{2}^{a.c.}(\Rd) \cap L^\infty(\mathcal{L}^\d)$, $ i= 1, \dots,N$ to be compactly supported. For a given time step $\tau \in (0,1)$, time horizon $T>0$, and weight function $\lambda \in C_b(\mathcal{P}_{2}(\Rd), (0,\infty))$, the sequence generated by the scheme \eqref{eq:iterativeMDE} satisfies $\mu_{(k)}^i \in \mathcal{P}_{2}^{a.c.}( \Rd ) \cap L^\infty(\mathcal{L}^{\d})$ and
\[\textup{Conv} \left(  \bigcup_{i=1}^N \textup{supp}(\mu_{(k)}^i)\right) \subseteq
\textup{Conv} \left(  \bigcup_{i=1}^N \textup{supp}(\mu_{0}^i)\right)\]
for all $k\tau \in [0,T]$, 
where $\textup{Conv}(A)$ is the convex hull of the set $A\subseteq\Rd$.
\end{lemma}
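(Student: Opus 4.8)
I will argue by induction on the iteration index $k$, establishing simultaneously that each $\mu_{(k)}^i$ lies in $\mathcal{P}_2^{a.c.}(\Rd)\cap L^\infty(\mathcal{L}^\d)$, is compactly supported, and satisfies the claimed convex-hull inclusion; the base case $k=0$ holds by assumption. For the inductive step I would first check that the scheme is well defined at step $k$. Since each $\mu_{(k)}^i$ is absolutely continuous, $\mathbb{P}_{\mu_{(k)}}$ charges $\mathcal{P}_2^{a.c.}(\Rd)$ with full mass, so the weighted barycenter $\overline\mu_{(k)}=\Bar(\lambda\mathbb{P}_{\mu_{(k)}})$ exists (existence for compactly supported data, \cite{carlier2011bary,le2017existence}) and is \emph{unique} by the uniqueness recalled after Definition~\ref{def:bary}. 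Moreover, since $\mu_{(k)}^i$ is absolutely continuous, Brenier's theorem gives a unique optimal map $T_i:=T_{\mu_{(k)}^i}^{\overline\mu_{(k)}}=\nabla\phi_i$ with $\phi_i$ convex, so that $\mu_{(k+1)}^i=(S_i)_\#\mu_{(k)}^i$ with $S_i:=(1-\tau)\Id+\tau T_i$ is unambiguously defined.

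For the regularity I would exploit that $S_i=\nabla u_i$ with $u_i(x)=(1-\tau)\tfrac{|x|^2}{2}+\tau\phi_i(x)$, which is $(1-\tau)$-strongly convex because $u_i-\tfrac{1-\tau}{2}|\cdot|^2=\tau\phi_i$ is convex. Monotonicity of $\partial\phi_i$ then gives, at $\mathcal{L}^\d$-a.e.\ (hence $\mu_{(k)}^i$-a.e.) points $x,y$ where the gradient exists, the estimate $(S_i(x)-S_i(y))\cdot(x-y)\geq (1-\tau)|x-y|^2$, whence $|S_i(x)-S_i(y)|\geq(1-\tau)|x-y|$ by Cauchy--Schwarz. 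Thus $S_i$ is injective on a set $D$ of full measure and $(S_i|_D)^{-1}$ is Lipschitz with constant $(1-\tau)^{-1}$. This yields $(S_i)_\#\mu_{(k)}^i\ll\mathcal{L}^\d$ together with the density bound $\|\mu_{(k+1)}^i\|_{L^\infty}\leq (1-\tau)^{-\d}\|\mu_{(k)}^i\|_{L^\infty}$. Since only finitely many steps $k\leq T/\tau$ occur, the $L^\infty$ norm stays finite throughout.

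For the support inclusion I set $K_{(k)}:=\textup{Conv}\left(\bigcup_{i}\supp(\mu_{(k)}^i)\right)$, a compact convex set, and first show $\supp(\overline\mu_{(k)})\subseteq K_{(k)}$ by a projection argument. Let $P$ be the nearest-point projection onto $K_{(k)}$, which satisfies $|P(y)-x|\leq|y-x|$ for all $x\in K_{(k)}$, with strict inequality whenever $y\notin K_{(k)}$. Coupling each $\mu_{(k)}^i$ (supported in $K_{(k)}$) optimally to $\overline\mu_{(k)}$ and composing the second marginal with $P$ shows $\sum_i\lambda(\mu_{(k)}^i)W_2^2(\mu_{(k)}^i,P_\#\overline\mu_{(k)})\leq\sum_i\lambda(\mu_{(k)}^i)W_2^2(\mu_{(k)}^i,\overline\mu_{(k)})$, and strictly if $\overline\mu_{(k)}(\Rd\setminus K_{(k)})>0$ since $\lambda>0$; minimality of $\overline\mu_{(k)}$ then forces $\overline\mu_{(k)}$ to be concentrated on $K_{(k)}$. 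Consequently $T_i(x)\in\supp(\overline\mu_{(k)})\subseteq K_{(k)}$ for $\mu_{(k)}^i$-a.e.\ $x$, and since also $x\in K_{(k)}$, the convex combination $S_i(x)=(1-\tau)x+\tau T_i(x)$ stays in $K_{(k)}$. Hence $\mu_{(k+1)}^i$ is concentrated on $K_{(k)}$, so $K_{(k+1)}\subseteq K_{(k)}$, and iterating gives $K_{(k)}\subseteq K_{(0)}$ for all admissible $k$.

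The main obstacle is making the density bound rigorous, since the Brenier potential $\phi_i$ need not be twice differentiable and $T_i$ need not be Lipschitz, so I would deliberately avoid any Monge--Amp\`ere regularity and argue purely at the level of measures. Restricting to the full-measure set $D$ on which $S_i$ is defined and strongly monotone, for any Borel $A$ one has $(S_i)_\#\mu_{(k)}^i(A)=\mu_{(k)}^i\left(S_i^{-1}(A)\cap D\right)\leq\|\mu_{(k)}^i\|_{L^\infty}\,\mathcal{L}^\d\left(S_i^{-1}(A)\cap D\right)$, and writing $S_i^{-1}(A)\cap D=(S_i|_D)^{-1}(A\cap S_i(D))$ the Lipschitz image estimate for the inverse controls $\mathcal{L}^\d\left(S_i^{-1}(A)\cap D\right)\leq(1-\tau)^{-\d}\,\mathcal{L}^\d(A)$. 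This simultaneously delivers absolute continuity and the stated bound without ever differentiating $S_i$.
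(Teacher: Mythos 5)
Your proof is correct, and it reaches the lemma by a genuinely different route than the paper. The paper's proof is essentially an assembly of results from \cite{carlier2011bary}: the density bound and the support inclusion $\supp(\overline{\mu}_{(k)})\subseteq\sum_i\tilde{\lambda}^i_{(k)}\,\supp(\mu^i_{(k)})$ for the weighted barycenter are quoted from Theorem 5.1 there, and the update $\mu^i_{(k+1)}$ is then recognized as a McCann interpolation, i.e.\ a two-measure barycenter with weights $(1-\tau,\tau)$, so that the same theorem yields $\|\mu^i_{(k+1)}\|_{L^\infty}\le (1-\tau)^{-1}\|\mu^i_{(k)}\|_{L^\infty}$ and the support containment. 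You avoid barycenter regularity altogether: in your argument the barycenter only needs to exist, be unique, and be the target of a Brenier map from each (absolutely continuous) agent; absolute continuity of the updated agents and the bound $\|\mu^i_{(k+1)}\|_{L^\infty}\le(1-\tau)^{-\d}\|\mu^i_{(k)}\|_{L^\infty}$ follow from strong monotonicity of $(1-\tau)\Id+\tau\nabla\phi_i$ (injectivity with $(1-\tau)^{-1}$-Lipschitz inverse, then the Lipschitz image estimate), and the confinement $\supp(\overline{\mu}_{(k)})\subseteq K_{(k)}$ is proved directly by the metric-projection argument rather than cited. Each approach has its merits: the paper's is shorter given the cited results and shows in passing that the barycenter itself is absolutely continuous with bounded density, while yours is self-contained, needs no regularity theory for barycenters, and carries the dimensionally correct exponent (the paper's displayed bounds omit the exponent $\d$ that appears in \cite[Theorem 5.1]{carlier2011bary}; a harmless slip here, since only finitely many steps occur). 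One small remark: the paper's proof concludes with density bounds that are uniform in $\tau$ (of the form $(e^{T}+1)\|\mu_0^i\|_{L^\infty}$ for $\tau$ small), and that uniformity is what is actually used later in Step 2 of the proof of Theorem \ref{t:ac}; you assert only finiteness for $k\le T/\tau$, but iterating your estimate gives $(1-\tau)^{-\d T/\tau}\|\mu_0^i\|_{L^\infty}\to e^{\d T}\|\mu_0^i\|_{L^\infty}$ as $\tau\to 0$, so the same uniformity follows from your argument with no extra work.
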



\begin{proof} 
Set $\Omega:= \textup{Conv} \left(  \bigcup_{i=1}^N \textup{supp}(\mu_{0}^i)\right)$ 
and assume the claim holds at the step $k$, that is, $\mu_{(k)}^i \in \mathcal{P}_{2}^{a.c.}(\Omega) \cap L^\infty(\mathcal{L}^{\d})$.

Due to the assumptions, the weights $\lambda(\mu_{(k)}^i)$ are positive, and so are the normalized ones given by  $ \tilde \lambda_{(k)}^i = \lambda(\mu_{(k)}^i)/\sum_j \lambda(\mu_{(k)}^j)$.
Since the barycenter definition does not depend on the normalization constant,  $\overline{\mu}_{(k)}$ is also the barycenter among $\{\mu_{(k)}^i\}_{i=1}^N \subset L^{\infty}(\mathcal{L}^\d)$ with positive weights $\tilde \lambda^i_{(k)}, \sum_{i=1}^N\tilde \lambda_{(k)}^i = 1$. By applying the regularity result for the barycenter \cite[Theorem 5.1]{carlier2011bary}, we obtain 
\[
\|\overline{\mu}_{(k)}^i\|_{L^\infty} \leq \inf_{i = 1, \dots,N} \frac1{\tilde{\lambda}_{(k)}^i} \| {\mu}_{(k)}^i\|_{L^\infty}\,.
\]
Additionally, it holds, again from \cite{carlier2011bary},
\[\textup{supp}(\overline{\mu}_{(k)}) \subseteq \sum_{i=1}^N\tilde{ \lambda}_{(k)}^i\,\textup{supp}(\mu_{(k)}^i) \subseteq \Omega\,,\] 
and we  deduce $\overline{\mu}_{(k)} \in \mathcal{P}_{2}^{a.c.}\left( \Omega \right) \cap L^\infty(\mathcal{L}^{\d})$.

The new agent $\mu^i_{(k+1)}$ consists of the McCann’s interpolation between the old agent position $\mu_{(k)}^i$ and the barycenter $\overline{\mu}_{(k)}$. Since the McCann's interpolation solves the barycenter problem with weights $(1-\tau)$ and $\tau$, see \cite[Section 6.2]{carlier2011bary}, from $\overline{\mu}_{(k)}, \mu_{(k)}^i \in \mathcal{P}_{2}^{a.c.}\left( \Omega \right) \cap L^\infty(\mathcal{L}^{ \d})$,  we have $\supp(\mu^i_{(k+1)}) \subseteq \Omega$ and the inequality
\[
\|\mu^i_{(k)}\|_{L^\infty}\leq \frac{\|\mu^i_{0}\|_{L^\infty}}{(1-\tau)^k}\,.
\]
Thanks to the bound $k\tau\leq T$, we have
\[(1-\tau)^{-k} \leq (1-\tau)^{-T/\tau} = e^{T} + o(1) \quad \text{as} \quad \tau \to 0\,.
\]
Therefore, for $\tau$ sufficiently small, we have
\[
\|\mu^i_{(k)}\|_{L^\infty}\leq( e^T +1)\|\mu^i_{0}\|_{L^\infty} 
\]
and we can conclude $\mu^i_{(k+1)}\in \mathcal{P}_{2}^{a.c.}\left( \Omega \right) \cap L^\infty(\mathcal{L}^{\d})$ for all $i = 1, \dots,N$ and with uniform bounds on the densities. 
\end{proof}

\subsubsection{Existence of solutions}

We show  that the iterative scheme \eqref{eq:iterativeMDE} allows to construct solutions to the multi-agent consensus dynamics \eqref{eq:MDI}.

\begin{theorem}
\label{t:ac}
Assume $\lambda \in C_b(\mathcal{P}_{2}(\Rd), (0,\infty))$ and consider a time horizon $T\in(0,\infty)$.
Given an initial datum $\mu_{0} = \otimes_{i=1}^N\mu_{0}^i$ such that $\mu_{0}^i  \in  \mathcal{P}_{2}^{a.c.}(\Rd) \cap L^\infty(\mathcal{L}^\d)$ are compactly supported, then there exists a solution $\mu \in \Lip\left([0,T], \mathcal{P}_2^{a.c.}((\Rd)^N)\right)$ to the MDI \eqref{eq:MDI} with $\lim_{t\to 0}\mu_t = \mu_0$.
\end{theorem}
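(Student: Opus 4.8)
The plan is to build the solution as a uniform limit of piecewise-geodesic interpolants of the Euler scheme \eqref{eq:iterativeMDE}, and then to pass to the limit in the weak formulation \eqref{eq:MDI_weak} using the upper semi-continuity of $\mathcal{V}$. First I would fix $\Omega := \textup{Conv}(\bigcup_{i=1}^N \textup{supp}(\mu_0^i))$ and, for each time step $\tau$, run the scheme to obtain the iterates $\mu_{(k)}^i$; by Lemma \ref{lem:SolutionsStayInsideCompact} these remain in $\mathcal{P}_2^{a.c.}(\Omega)\cap L^\infty(\mathcal{L}^\d)$ with a density bound uniform in $k$ and $\tau$. On each subinterval $[k\tau,(k+1)\tau]$ I would interpolate by following, for each agent, the McCann geodesic toward the frozen barycenter $\overline\mu_{(k)}$, namely
\begin{equation*}
\mu^{\tau,i}_t := \left((1-(t-k\tau))\,\Id + (t-k\tau)\,T_{\mu_{(k)}^i}^{\overline\mu_{(k)}}\right)_\#\mu_{(k)}^i,
\end{equation*}
so that the endpoints match $\mu_{(k)}^i$ and $\mu_{(k+1)}^i$. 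By the computation in Subsection \ref{SubSec:GeodesicSolutions}, each such piece solves an MDI whose velocity is $(\pi_0,\pi_1-\pi_0)_\#\gamma^{\tau,i}_t$ for some $\gamma^{\tau,i}_t\in\Gamma_o(\mu^{\tau,i}_t,\overline\mu_{(k)})$.

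The next step is a compactness argument. Since all iterates and interpolants are supported in the fixed compact set $\Omega$, the geodesic displacements $T-\Id$ have $L^2$-norm at most $\diam(\Omega)$, so each agent curve is Lipschitz in $W_2$ with a constant independent of $\tau$, and hence so is the ensemble curve $\mu^\tau = \otimes_{i=1}^N \mu^{\tau,i}$; moreover all curves take values in the $W_2$-compact set $\mathcal{P}_2(\Omega^N)$. An Arzel\`a--Ascoli argument then yields a subsequence $\mu^{\tau_n}$ converging uniformly to a limit $\mu\in\Lip([0,T],\mathcal{P}_2(\Omega^N))$ with $\lim_{t\to0}\mu_t=\mu_0$. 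The uniform $L^\infty$ density bound from Lemma \ref{lem:SolutionsStayInsideCompact} is stable under weak-$*$ convergence, so each marginal $\mu_t^i$ is absolutely continuous with bounded density; this is what keeps the limit inside $\mathcal{P}_2^{a.c.}$.

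To pass to the limit in the velocities I would collect the space--time measures $\Theta^{\tau}:=\int_0^T(\delta_t\otimes V^\tau_t)\,dt$, which are supported in the fixed compact set $[0,T]\times\T\Omega^N$ and hence admit a weak-$*$ convergent subsequence with limit $\Theta=\int_0^T(\delta_t\otimes V_t)\,dt$. Since the integrand $v\cdot\nabla\phi(x)$ is bounded and continuous on this compact support (and Lemma \ref{Lem:VectorFieldBound} quantifies the continuity of these pairings), passing to the limit in the integrated identity $\langle\phi,\mu^\tau_s-\mu_0\rangle=\int_0^s\langle\nabla\phi,V^\tau_t\rangle\,dt$, with the boundary/cutoff terms handled exactly as in the proof of Lemma \ref{lemma:system2one}, gives that $\mu$ satisfies $\langle\phi,\mu_s-\mu_0\rangle=\int_0^s\langle\nabla\phi,V_t\rangle\,dt$ for all $\phi\in C_c^\infty((\Rd)^N)$. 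It then only remains to identify $V_t\in\mathcal{V}[\mu_t]$ for a.e.\ $t$.

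The hard part will be this last identification. The velocity $V^\tau_t$ is built from the \emph{frozen} barycenter $\overline\mu_{(k)}=\Bar(\lambda\mathbb{P}_{\mu_{(k)}})$, not from the current one $\Bar(\lambda\mathbb{P}_{\mu^\tau_t})$, so I would first note that $W_2(\mu^\tau_{(k)},\mu^\tau_t)\le\tau\,\diam(\Omega)\to0$ together with the stability of Wasserstein barycenters \cite{carlier2011bary,carlier2022quantitative} forces both barycenters to converge to $\Bar(\lambda\mathbb{P}_{\mu_t})$. Crucially, since the limit marginals $\mu_t^i$ are absolutely continuous, this barycenter is unique and each $\Gamma_o(\mu_t^i,\Bar(\lambda\mathbb{P}_{\mu_t}))$ reduces to a single optimal map, so $\mathcal{V}[\mu_t]$ is a singleton. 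The upper semi-continuity of $\mathcal{V}$ (Proposition \ref{p:uppersemi}), applied along the converging base points $\mu^\tau_t\to\mu_t$ and velocities $V^\tau_t$, then pins the weak-$*$ limit $V_t$ to this unique element, giving $V_t\in\mathcal{V}[\mu_t]$ for a.e.\ $t$ and completing the proof via Lemma \ref{lemma:system2one}. The two delicate points are the stability of the optimal plans under the weak-$*$ limit (needed so that the limiting velocity is still an \emph{optimal}, not merely admissible, coupling) and the uniform-in-$t$ vanishing of the discrepancy coming from the frozen barycenter; both are controlled by the compact support in $\Omega$, the uniform density bounds, and the continuity of barycenters and optimal maps on the absolutely continuous class.
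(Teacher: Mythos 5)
Your proposal is correct in substance and shares the paper's skeleton: the same Euler scheme \eqref{eq:iterativeMDE}, the same piecewise-geodesic interpolation, Ascoli--Arzel\`a compactness in $\mathcal{P}_2(\Omega^N)$, and preservation of absolute continuity through the uniform $L^\infty$ bounds of Lemma \ref{lem:SolutionsStayInsideCompact}. Where you genuinely diverge is the limit passage. The paper never uses the interpolant's velocity at intermediate times: it compares the discrete increments with $\tau\langle\nabla\varphi,\mathcal{V}[\mu^\tau_{(k)}]\rangle$ via Taylor's theorem (at grid points the scheme's velocity \emph{is} an element of $\mathcal{V}[\mu^\tau_{(k)}]$, so no frozen-barycenter discrepancy ever arises), and then replaces Riemann sums by time integrals using that $\mathcal{V}$, being upper semi-continuous and single-valued on absolutely continuous measures, is uniformly continuous on the compact set $A=\Cl\bigl(\bigcup_j\mu^{\tau_j}([0,T])\bigr)$; this gives the quantitative bounds \eqref{eq:bound1}--\eqref{eq:bound3}. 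You instead keep the interpolant's true velocity, compactify the space--time velocity measures, and identify the limit velocity pointwise in $t$.

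Your route can be closed, but two points need repair. First, the velocity of the linear interpolant is \emph{not} $(\pi_0,\pi_1-\pi_0)_\#\gamma^{\tau,i}_t$ with $\gamma^{\tau,i}_t\in\Gamma_o(\mu^{\tau,i}_t,\overline\mu_{(k)})$: it equals $\bigl(1-(t-k\tau)\bigr)^{-1}$ times that optimal displacement (the exponential reparametrization in Subsection \ref{SubSec:GeodesicSolutions} exists precisely to absorb this factor); since the factor tends to $1$ uniformly as $\tau\to 0$ it is harmless, but it must be tracked, otherwise the integrated identity you pass to the limit in is false at order $\tau$. Second, because $V^\tau_t\notin\mathcal{V}[\mu^\tau_t]$ (frozen barycenter plus that rescaling), Proposition \ref{p:uppersemi} cannot be applied verbatim to the pairs $(\mu^\tau_t,V^\tau_t)$ as your last paragraph suggests; what is actually needed is upper semi-continuity of $\Bar(\cdot)$ and of $\Gamma_o$ \emph{separately} (the appendix ingredients behind Proposition \ref{p:uppersemi}), combined with uniqueness of the barycenter and of the optimal plan at the absolutely continuous limit $\mu_t$ --- which is exactly the ``stability'' you name as the delicate point, so the gap is presentational rather than conceptual. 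With these repairs your identification argument works; it even yields the weak identity on every interval $[0,s]$ directly, whereas the paper's Taylor--Riemann route buys shorter, fully quantitative estimates and sidesteps any stability argument for barycenters and plans.
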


\begin{proof} We will divide the proof in three steps. First, we interpolate the time-discrete agents constructed by the iterative scheme \eqref{eq:iterativeMDE}. Then, we show existence of a limit as $\tau \to 0$, $k\to\infty$, and finally we show the limit solves the desired MDI.

\textbf{Step 1.} As before, we set $\Omega:= \textup{Conv} \left(  \bigcup_{i=1}^N \textup{supp}(\mu_{0}^i)\right)$. 
To stress the dependence on $\tau$, we indicate with $\mu^{i,\tau}_{(k)} \in \mathcal{P}^{a.c.}_2(\Omega) \cap L^\infty(\mathcal{L}^{\d})$ and $\mu^\tau_{(k)} = \otimes_{i=1}^N \mu_{(k)}^{i,\tau}$ the measures constructed according to \eqref{eq:iterativeMDE} with parameter $\tau\in(0,1)$. The fact that they are supported over $\Omega$ follows from Lemma \ref{lem:SolutionsStayInsideCompact}. As the agents' instantaneous movement is along geodesics, for any $k$ it holds
\begin{align*}
W_2(\mu_{(k)}^{i,\tau}, \mu_{(k+1)}^{i,\tau}) \leq \tau W_2(\mu_{(k)}^{i,\tau}, \overline{\mu}_{(k)}^{\tau}) \leq \tau \diam(\Omega)\, .
\end{align*}
Since the product of optimal transport couplings between $\mu^{i,\tau}_{(k)}, \, \mu^{i,\tau}_{(k+1)}$, $i = 1,\dots,N$ produces a sub-optimal coupling between $\mu_{(k)}^\tau$ and $\mu_{(k+1)}^\tau$, it holds
\begin{equation}
W_2(\mu^{\tau}_{(k)}, \mu^{\tau}_{(k+1)})^2
\leq \sum_{i=1}^N W_2(\mu^{i,\tau}_{(k)}, \mu^{i,\tau}_{(k+1)})^2 \leq N \tau^2 \diam(\Omega)^2.
\label{eq:lipconst}
\end{equation}
Now, let $\mu^{i,\tau}_t$, $t \in [0,T]$ be the piecewise geodesic interpolation of $\mu^{i,\tau}_{(k)}$, that is,
\[
\mu_t^{i,\tau}  :=  \left((1- t +k\tau)\Id + (t - k\tau) T_{\mu^i_{(k)}}^{\overline{\mu}_{(k)}}\right )_\#\mu_{(k)}^{i,\tau} \qquad \textup{for}\quad t \in [k\tau, (k+1)\tau],
\]
and $\mu^\tau_t = \otimes_{i=1}^N \mu_t^{i,\tau}$. 

\textbf{Step 2.}
Thanks to \eqref{eq:lipconst}, we have $\mu^\tau \in \Lip\left([0,T], \mathcal{P}_2^{a.c.}(\Omega^N)\right)$ with Lipschitz constant $\sqrt{N}\diam(\Omega)$ and $\supp(\mu^\tau)\subseteq\Omega$ thanks to Lemma \ref{lem:SolutionsStayInsideCompact}.
Since the domain $\Omega$ is compact, by a generalized version of Ascoli--Arzelà theorem for metric spaces (see \cite[Theorem 18]{kelley2017general}), there exists a limit curve to $\mu^\tau$ as $\tau \to 0$ in $\Lip([0,T], \mathcal{P}_2(\Omega^N))$.
Consider a sequence $(\tau_j)_{j \in \mathbb{N}}$ with $\lim_{j\to\infty}\tau_j=0$, such that $\mu^{\tau_j}$ converges uniformly to $\mu\in \Lip\left([0,T], \mathcal{P}_2(\Omega^N)\right)$. We will show that $\mu_t$ admits a density. For any $t\in [0,T]$ and $\varphi\in C((\Rd)^N)$, it holds
\[
\lim_{j \to \infty} \int_{(\Rd)^N} \varphi(x)\,\mu^{\tau_j}_t(dx) = \int_{(\Rd)^N}\varphi(x)\, \mu_t(dx)\,.
\]
Let $f_t^{\tau_j} \in L^\infty(\Omega^N)$ be density of $\mu^{\tau_j}_t$ with respect to $\Leb^{\d N}$. 
Consider a test function $\varphi \in C_c^\infty((\Rd)^N)$, and for any $\ve>0$ let $g \in  L^1(\Omega^N)$ such that $\| g - \varphi\|_{L^1} \leq \ve$. It holds for any $j,l\in\N$
\begin{align*}
\left|\int_{(\Rd)^N} g(x) (f^{\tau_j}_t (x)-f^{\tau_l}_t (x)) dx\right|&\leq \left|\int_{(\Rd)^N} \varphi(x) (f^{\tau_j}_t (x)-f^{\tau_l}_t (x) )dx\right|\\
& \qquad+\varepsilon\sup\{\|f^{\tau_j}_t\|_\infty,\|f^{\tau_l}_t\|_\infty\}\\
&\leq\|\nabla\varphi\|_{L^\infty} W_2\left(\mu^{\tau_j}_t ,\mu^{\tau_l}_t\right)+\varepsilon\sup\{\|f^{\tau_j}_t\|_\infty,\|f^{\tau_l}_t\|_\infty\}\,.
\end{align*}
Thanks to the fact that $\|f^{\tau_l}_t\|_\infty$ are uniformly bounded, and thanks to Lemma \ref{lem:SolutionsStayInsideCompact}, the above shows that $\int g f^{\tau_j}_t \Leb^{\d N} $ has a limit. Since the choice of $g$ and $\varphi$ were general, we obtain that $f_t^{\tau_j}$ has a weak-$\star$ limit $f_t\in L^\infty(\Omega^N)$, which in particular is the density of $\mu_t$ and $\mu_t = f_t \Leb^{\d N}$.

\textbf{Step 3.} We show that $\mu$ satisfies \eqref{eq:MDI_weak}: we note that, since $\mathcal{V}$ is upper semi-continuous on the space of measures (Proposition \ref{p:uppersemi}), it is continuous on $\mathcal{P}^{a.c.}_2(\Rd)$ because it is single valued on that space.
Let $K^\tau\in\N$ be the largest integer such that $K^\tau \tau\leq T$ and $T_{(k)}^\tau$ be the collection of all transport maps 
\[T_{(k)}^\tau:= \left (T_{\mu_{(k)}^1}^{\overline{\mu}_{(k)}}, \dots, T_{\mu_{(k)}^N}^{\overline{\mu}_{(k)}}\right)^\top\,,
\]
it holds
\begin{align*}
\langle\varphi,\mu^\tau_{T}-\mu^\tau_0\rangle&=\sum_{k=0}^{K^\tau-1}\langle \varphi, \mu^\tau_{(k+1)}\rangle - \langle \varphi, \mu^\tau_{(k)} \rangle\\
&=\sum_{k=0}^{K^\tau-1}\int_{(\Rd)^N} \left(\varphi \left(x+\tau (T^\tau_{(k)}(x)-x)\right) - \varphi (x) \right) \mu^\tau_{(k)} (dx)\,.
\end{align*}
By applying Taylor's theorem on $\varphi$ for all $k = 0,\dots,K^\tau-1$ we obtain functions $h_k$
\begin{align*}
 \int\varphi\left(x+(T^\tau_{(k)}(x)-x)\tau\right)& \mu^\tau_{(k)}(dx)-\int\varphi(x) \mu^\tau_{(k)}(dx) \\
 &= \tau\int \left(T^\tau_{(k)}(x)-x\right)\cdot\nabla\varphi(x) \mu^\tau_{(k)}(dx)+h_k(\tau)\frac{\tau^2}{2} \\
&= \tau\left\langle\nabla\varphi , \mathcal {V}\left[\mu^\tau_{(k)}\right]\right\rangle + \frac{\tau^2}{2} h_k(\tau)\,,
\end{align*}
with a remainder term $| h_k| \leq \| \nabla^2 \varphi\|_{L^\infty}N \diam(\Omega)^2$, thanks to $|T^\tau_{(k)}(x)-x|^2 \leq N  \diam(\Omega)^2$.
By summing all $k = 0,\dots, K^\tau-1$ and using $\tau K^\tau \leq T$, we get
\begin{align}
\left| \langle \varphi, \mu^\tau_{(K^\tau)}-\mu^\tau_{(0)}\rangle -\sum_{k=0}^{K^\tau-1}\tau\left\langle \nabla \varphi , \mathcal {V}\left[\mu^\tau_{(k)}\right]\right\rangle\right|&\leq \tau T\|\nabla^2\varphi\|_{L^\infty} N \diam(\Omega)^2 \notag\\
&=: C_1  \tau \,.
\label{eq:bound1}
\end{align}

Consider again the sequence $\tau_j\to 0$ such that $\lim_{j\to\infty}W_2\left(\mu^{\tau_j}_t,\mu_t\right)=0$ uniformly.
The closed set
\[
A=\Cl\Bigg(\bigcup_{j\in\N}\mu^{\tau_j}([0,T])\Bigg)
\]
is, then, compact. Since all the measures inside $A$ are absolutely continuous we have that $\mathcal{V}$ is single valued on all $A$ and, therefore, it is uniformly continuous. In particular, for any $\varepsilon>0$, there exists a $\delta>0$ such that for all $\mu,\nu\in A$: $W_2(\mu,\nu)<\delta$ implies $W_2(\mathcal{V}[\mu],\mathcal{V}[\nu])<\varepsilon$.
Since $W_2(\mu^{\tau_j}_s,\mu^{\tau_j}_t)\leq |s-t|\diam(\Omega)\sqrt{N}$, we also get that for any $\tau_j<\delta/(\diam(\Omega)\sqrt{N})$ it holds
\[
W_2(\mathcal{V}[\mu^{\tau_j}_s],\mathcal{V}[\mu^{\tau_j}_t)]<\varepsilon\quad\textup{if} \quad |s-t|<\tau_j.
\]
The above estimate leads to
\begin{align}
\Bigg | \sum_{k=0}^{K^{\tau_j}-1} \! \tau_j \langle \nabla \varphi,\mathcal {V}[\mu^{\tau_j}_{(k)}]\rangle &\!-\! \int_0^T\!\!\!\langle\nabla\varphi,\mathcal {V}[\mu^{\tau_j}_t]\rangle dt \Bigg | 
=\left|\sum_{k=0}^{K^{\tau_j}-1}\!\!\int_{k{\tau_j}}^{(k+1){\tau_j}}\!\!\langle \nabla\varphi,\mathcal {V}[\mu^{\tau_j}_{(k)}]-\mathcal {V}[\mu^{\tau_j}_t]\rangle dt \right|
\notag
\\
&\leq \sum_{k=0}^{K^{\tau_j}-1}{\tau_j}\|\nabla\varphi\|_{W^{1,\infty}}\diam(\Omega)\sup_{t\in [k{\tau_j},(k+1){\tau_j}]} W_2(\mathcal {V}[\mu^{\tau_j}_{(k)}],\mathcal {V}[\mu^{\tau_j}_t]) \notag
\\
&\leq  T\|\nabla\varphi\|_{W^{1,\infty}}\diam(\Omega) \varepsilon =: C_2 \varepsilon\,.
\label{eq:bound2}
\end{align}
Thanks again to the uniform continuity of $\mathcal{V}$ on the set $A$, we can take $\tau_j$ sufficiently small such that $\sup_{t\in[0,T]}W_2(\mu_t,\mu^{\tau_j}_t)\leq\delta$ implies $\sup_{t\in[0,T]}W_2(\mathcal {V}[\mu_t],\mathcal {V}[\mu_t^{\tau_j}])\leq\varepsilon$, leading to 
\begin{align}
\left|\left\langle\varphi,\mu_T-\mu_0\right\rangle - \left\langle\varphi,\mu^{\tau_j}_T-\mu^{\tau_j}_0\right\rangle\right|&\leq 2 \|\nabla\varphi\|_{L^\infty} \sup_{t\in [0,T]} W_2(\mu_t,\mu_t^{\tau_j})
\leq 2 \|\nabla\varphi\|_\infty\delta \notag \\
& =: C_0 \delta
\label{eq:bound0}
\end{align}
and
\begin{align}
\left|\int_0^T\left\langle\varphi , \mathcal {V}[\mu_{t}]-\mathcal {V}[\mu^{\tau_j}_{t}]\right\rangle dt \right|&\leq T\|\nabla\varphi\|_{W^{1,\infty}}\diam(\Omega)
\sup_{t\in[0,T]}W_2(\mathcal{V}[\mu_t^{\tau_j}],\mathcal{V}[\mu_t]) \notag
\\
&\leq T\|\nabla\varphi\|_{W^{1,\infty}}\diam(\Omega)\varepsilon=: C_3 \varepsilon\,.
\label{eq:bound3}
\end{align}
We put now all the above inequalities together to get the inequality
\begin{align*}
\left|\left\langle\varphi,\mu_T-\mu_0\right\rangle-\int_0^T\left\langle\varphi , \mathcal {V}[\mu_{t}]\right\rangle\right|&\leq 2 C_0\delta+C_1\tau_j+C_2\varepsilon+C_3\varepsilon
\end{align*}
with constants $C_i>0$, $i = 0,1,2,3$ depending on $N$, $\diam(\Omega)$, $T$ and on $\|\nabla\varphi\|_{W^{1,\infty}}$.
Since $\varepsilon>0$ is arbitrary, $\delta$ can be chosen to be smaller than $\varepsilon>0$ and from the choice $\tau_j < \delta /(\diam(\Omega) \sqrt{N})$, we can conclude that
\[
\left|\left\langle\varphi,\mu_T-\mu_0\right\rangle-\int_0^T\left\langle\varphi , \mathcal {V}[\mu_{t}]\right\rangle\right|=0\quad\textup{for all}\quad\varphi\in C^\infty_c((\Rd)^N).
\]
\end{proof}

\subsection{The set of solutions is closed}

Before extending the existence result Theorem \ref{t:ac} to general agents with compact support in $\mathcal{P}_2(\Rd)$, we show that the solutions to \eqref{eq:MDI}, that is, 
\[
\Sol[\mathcal{V}] := \left\{\mu \in C([0,T], \mathcal{P}(\RR^{\d N}))
\;:\; \mu\;\; \textup{satisfies \eqref{eq:MDI} with}\;  \mu_0 \in \mathcal{P}_2(\RR^{\d N})\right \},
\]
forms a closed set.
To this end, we reformulate both the MDI \eqref{eq:MDI} and the upper semi-continuity property of Definition \ref{def:uppersemi} by an asymmetric Hausdorff quasimetric. This is defined for any compact subsets $A,B$ of $\mathcal{P}_2(\RR^{\d N})$ and given by 
\[
\mathrm{d}^{W_2}_H(A,B):=\sup_{\mu\in A}
\inf_{\nu \in B} W_2(\mu, \nu)\,.
\]

\begin{lemma}\label{Prop:USC1}
A MPVF $\mathcal{W}:\mathcal{P}_2((\Rd)^N)\rightrightarrows\mathcal{P}_2(\T(\Rd)^N)$ is upper semi-continuous if and only if for any $\nu \in \mathcal{P}_2((\Rd)^N)$ it holds
\begin{align}\label{eq:SetUSC}
\lim_{\mu\to\nu} \mathrm{d}^{W_2}_H\left( \mathcal {W}[\mu] , \mathcal {V}[\nu]\right)=0\,.
\end{align}
\end{lemma}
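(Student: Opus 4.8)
The plan is to prove the two implications separately, after unwinding the quasimetric. Condition~\eqref{eq:SetUSC} says exactly that for every $\nu$ and every $\varepsilon>0$ there is $\delta>0$ so that $W_2(\mu,\nu)<\delta$ forces every element of $\mathcal{W}[\mu]$ to lie within $W_2$-distance $\varepsilon$ of the (compact) set $\mathcal{W}[\nu]$. The implication from \eqref{eq:SetUSC} to the closed-graph property of Definition~\ref{def:uppersemi} is essentially immediate, whereas the converse is where the real work lies.

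\emph{From \eqref{eq:SetUSC} to closed graph.} I would take sequences $\mu_n\to\nu$ and $\rho_n\to\rho$ in $W_2$ with $\rho_n\in\mathcal{W}[\mu_n]$, and show $\rho\in\mathcal{W}[\nu]$. By definition of the quasimetric and \eqref{eq:SetUSC},
\[
\inf_{\sigma\in\mathcal{W}[\nu]}W_2(\rho_n,\sigma)\le \mathrm{d}^{W_2}_H(\mathcal{W}[\mu_n],\mathcal{W}[\nu])\to 0 .
\]
Using compactness of $\mathcal{W}[\nu]$ I can choose $\sigma_n\in\mathcal{W}[\nu]$ realising (up to a vanishing error) this infimum, so $W_2(\rho_n,\sigma_n)\to0$; then $W_2(\rho,\sigma_n)\le W_2(\rho,\rho_n)+W_2(\rho_n,\sigma_n)\to0$, and closedness of $\mathcal{W}[\nu]$ gives $\rho\in\mathcal{W}[\nu]$. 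This direction only uses that the values are closed.

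\emph{From closed graph to \eqref{eq:SetUSC}.} I would argue by contradiction: if \eqref{eq:SetUSC} failed at some $\nu$, there would exist $\varepsilon>0$ and $\mu_n\to\nu$ with $\mathrm{d}^{W_2}_H(\mathcal{W}[\mu_n],\mathcal{W}[\nu])\ge\varepsilon$, hence $\rho_n\in\mathcal{W}[\mu_n]$ with $\inf_{\sigma\in\mathcal{W}[\nu]}W_2(\rho_n,\sigma)\ge\varepsilon/2$. The crux is to extract a $W_2$-convergent subsequence $\rho_{n_k}\to\rho$; once this is in hand, the closed-graph hypothesis applied to $(\mu_{n_k},\rho_{n_k})$ yields $\rho\in\mathcal{W}[\nu]$, contradicting $W_2(\rho_{n_k},\rho)\ge\varepsilon/2$ in the limit.

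The main obstacle is precisely this compactness step, since $W_2$-convergence of $\{\rho_n\}$ is not automatic in the non-locally-compact space $\mathcal{P}_2(\T(\Rd)^N)$: it requires the family to be tight and to have uniformly integrable second moments. This is what forces a (local) compactness assumption on the values of a general $\mathcal{W}$; for the MPVF $\mathcal{V}$ of \eqref{def:V} to which the Lemma is applied it is automatic. Indeed, each $\rho_n\in\mathcal{V}[\mu_n]$ has first marginal $\mu_n$ (tight and $2$-uniformly integrable because $\mu_n\to\nu$ in $W_2$), while its velocity component $v=y-x$ is controlled by $\int|v|^2\,d\rho_n=\sum_i W_2^2(\mu_n^i,\overline\mu_n)$, which stays bounded since the barycenters $\overline\mu_n=\Bar(\lambda\mathbb{P}_{\mu_n})$ retain bounded second moments along $\mu_n\to\nu$; when the dynamics keeps all supports inside a fixed compact set (Lemma~\ref{lem:SolutionsStayInsideCompact}) one even has $|v|\le\diam(\Omega)$, making tightness and uniform integrability trivial. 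Establishing this relative $W_2$-compactness is the single genuinely nontrivial ingredient, after which both implications close cleanly.
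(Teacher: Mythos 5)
Your proposal is correct and follows the same two-implication skeleton as the paper's proof: the quasimetric condition gives the closed graph by a triangle-inequality argument, and the converse is argued by contradiction, extracting $\rho_n\in\mathcal{W}[\mu_n]$ that stay a fixed $W_2$-distance away from $\mathcal{W}[\nu]$. The genuine difference is what happens after that extraction. The paper's proof stops there and declares a contradiction with upper semi-continuity, but Definition~\ref{def:uppersemi} only produces a contradiction once one has a $W_2$-convergent subsequence $\rho_{n_k}\to\rho$; in the non-locally-compact space $\mathcal{P}_2(\T(\Rd)^N)$ this compactness is not automatic, and this is exactly the step you isolate as the crux. Your diagnosis is right: for a general MPVF with merely closed graph the implication can fail (e.g.\ a single-valued field whose velocity component blows up as $\mu\to\nu$, so that along such sequences the graph is closed vacuously while $\mathrm{d}^{W_2}_H$ diverges), so the lemma implicitly relies on relative compactness of the values --- which you then verify for the specific $\mathcal{V}$ of \eqref{def:V}, where the position marginal is controlled by the $W_2$-convergence of $\mu_n$, the velocity second moments by $\sum_i W_2^2(\mu_n^i,\overline{\mu}_n)$, and where, in the compactly supported regime in which the lemma is actually deployed (Theorem~\ref{Thm::Closure}, via Lemma~\ref{lem:SolutionsStayInsideCompact}), tightness and uniform integrability are immediate. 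One caution on your verification in the non-compact case: boundedness of the velocity second moments yields tightness, hence weak subsequential limits, but $W_2$-convergence additionally requires uniform integrability of the second moments, which boundedness alone does not give; your statement of the needed hypothesis is correct, but the sketch only establishes boundedness outside the compact-support setting. In short, your route is the paper's route carried out with the compactness step made explicit --- a step the paper's own argument passes over silently --- and this makes your version the more complete of the two.
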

\begin{proof}
Given $\mathcal{W}$ upper semi-continuous we proceed by contradiction. Assume the existence of a converging sequence $\mu^n\to\mu$ in $\mathcal{P}_2(\RR^{\d N})$ and $\alpha>0$ such that
\[
\lim_{n\to\infty} \mathrm{d}^{W_2}_H\left( \mathcal {W}[\mu^n] , \mathcal {W}[\nu]\right)>\alpha\, .
\]
By  the definition of the asymmetric Hausdorff quasimetric $\d_H^{W_2}$ we get the existence of $\varepsilon>0$ small enough and $N_\varepsilon\in\N$ large enough such that for any $n>N_\varepsilon$ 
\begin{align*}
\alpha<\mathrm{d}^{W_2}_H\left( \mathcal {W}[\mu^n] , \mathcal {W}[\mu]\right)+\varepsilon =\sup_{\rho\in \mathcal {W}[\mu^n] }\inf_{\sigma\in \mathcal {W}[\mu] }W_2(\rho,\sigma)+\varepsilon\,.
\end{align*}
Hence, there exists a sequence $\rho_n\in\mathcal{W}[\mu^n]$ such that for $n\in\N$ large enough,
\begin{align*}
\alpha<W_2(\rho_n,\sigma)\qquad\textup{for all} \quad \sigma\in\mathcal{W}[\mu]\, ,
\end{align*}
which is a contradiction as $\mathcal{W}$ is upper semi-continuous.
Now, if we assume that $\mathcal{W}$ satisfies \eqref{eq:SetUSC} then by  definition of $d_{H}^{W_2}$ we directly obtain the upper semi-continuity of $\mathcal{W}$.
\end{proof}

Since we are  interested in testing the distributions contained inside $\mathcal{V}[\mu]$, we introduce, as a second step, upper semicontinuity for $\langle\nabla\varphi,\mathcal{V}[\mu]\rangle$.
We define the Hausdorff quasimetric on the subsets of the reals, so for $A,B \subset \RR$ we have
\[
\mathrm{d}^{\RR}_H(A,B):=\sup_{x\in A}
\inf_{y \in B} |x - y|\,.
\]
We can already note that the differential inclusion \eqref{eq:MDI} can be directly rewritten as
\[
\mathrm{d}^\RR_H\left(\langle\varphi,\mu_T-\mu_0\rangle,\int_0^T\langle\nabla\varphi,\mathcal {V}[\mu_t]\rangle dt\right)=0\qquad \textup{for all} \quad \varphi\in C^\infty_c( \RR^{\d N})\, .
\]
The following Lemma motivates this reformulation.

\begin{lemma}\label{Prop:USC2}
A MPVF $\mathcal{W}:\mathcal{P}_2((\Rd)^N)\rightrightarrows\mathcal{P}_2(\T(\Rd)^N)$ is upper semi-continuous if and only if for any $\nu \in \mathcal{P}_2((\Rd)^N)$ and $\varphi\in C^\infty_c(\RR^{\d N})$ it holds
\begin{align}\label{eq:AltUSC}
\lim_{\mu\to\nu} \mathrm{d}^\RR_H\left(\langle\nabla\varphi,\mathcal {W}[\mu]\rangle,\langle\nabla\varphi,\mathcal {W}[\nu]\rangle\right)=0\, .
\end{align}
\end{lemma}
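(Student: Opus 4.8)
The plan is to prove both implications by passing through the measure--level characterization of upper semi--continuity already obtained in Lemma~\ref{Prop:USC1}, namely that $\mathcal{W}$ is upper semi--continuous if and only if $\lim_{\mu\to\nu}\mathrm{d}^{W_2}_H(\mathcal{W}[\mu],\mathcal{W}[\nu])=0$ for every $\nu\in\mathcal{P}_2((\Rd)^N)$. It therefore suffices to show that this condition is equivalent to \eqref{eq:AltUSC}, and the bridge between the two levels is Lemma~\ref{Lem:VectorFieldBound}, which bounds $|\langle\nabla\varphi,V-W\rangle|$ by $C\|\nabla\varphi\|_{W^{1,\infty}}W_2(V,W)$.

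First I would treat the implication from upper semi--continuity to \eqref{eq:AltUSC}. Fixing $\nu$ and $\varphi$, I would expand $\mathrm{d}^\RR_H(\langle\nabla\varphi,\mathcal{W}[\mu]\rangle,\langle\nabla\varphi,\mathcal{W}[\nu]\rangle)=\sup_{V\in\mathcal{W}[\mu]}\inf_{W\in\mathcal{W}[\nu]}|\langle\nabla\varphi,V-W\rangle|$ and, for each $V\in\mathcal{W}[\mu]$, bound the inner infimum by Lemma~\ref{Lem:VectorFieldBound} with $C\|\nabla\varphi\|_{W^{1,\infty}}\inf_{W\in\mathcal{W}[\nu]}W_2(V,W)\le C\|\nabla\varphi\|_{W^{1,\infty}}\,\mathrm{d}^{W_2}_H(\mathcal{W}[\mu],\mathcal{W}[\nu])$. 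Taking the supremum over $V$ and letting $\mu\to\nu$ yields \eqref{eq:AltUSC} via Lemma~\ref{Prop:USC1}. The only delicate point is that the constant $C$ of Lemma~\ref{Lem:VectorFieldBound} depends on the second moment of $V$; I would make it uniform by observing that, for $\mu$ sufficiently close to $\nu$, every $V\in\mathcal{W}[\mu]$ lies within bounded $W_2$--distance of the compact set $\mathcal{W}[\nu]$, whose elements have uniformly bounded second moments, so that $(\int|v|^2\,V(dx,dv))^{1/2}\le W_2(V,\delta_0)$ stays bounded as $\mu\to\nu$.

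For the converse I would mimic the direct argument of Lemma~\ref{Prop:USC1}. Let $\mu_n\to\nu$ and $\rho_n\in\mathcal{W}[\mu_n]$ with $\rho_n\to\rho$ in $W_2$. For each fixed $\varphi$, assumption \eqref{eq:AltUSC} gives $\inf_{W\in\mathcal{W}[\nu]}|\langle\nabla\varphi,\rho_n-W\rangle|\to0$, while Lemma~\ref{Lem:VectorFieldBound} gives $\langle\nabla\varphi,\rho_n\rangle\to\langle\nabla\varphi,\rho\rangle$; since $\langle\nabla\varphi,\mathcal{W}[\nu]\rangle$ is the continuous image of the compact set $\mathcal{W}[\nu]$, hence closed, I obtain $\langle\nabla\varphi,\rho\rangle\in\langle\nabla\varphi,\mathcal{W}[\nu]\rangle$ for every $\varphi$.

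The hard part---and, I suspect, the genuine mathematical content---will be upgrading this family of per--test--function memberships to the set membership $\rho\in\mathcal{W}[\nu]$ demanded by Definition~\ref{def:uppersemi}. The witness $W\in\mathcal{W}[\nu]$ realizing $\langle\nabla\varphi,\rho\rangle=\langle\nabla\varphi,W\rangle$ depends on $\varphi$, and the functionals $V\mapsto\langle\nabla\varphi,V\rangle$ do not separate $\mathcal{P}_2(\T(\Rd)^N)$: by \eqref{eq:distribution} they record only the distribution $T_V$, i.e.\ the current generated by the barycentric projection of $V$, so two measures sharing base and barycentric projection are indistinguishable even at large $W_2$--distance. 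Hence $W_2$--separation need not survive at the tested level, and for a completely arbitrary $\mathcal{W}$ the converse looks too strong; the proof must exploit that the relevant values are determined by their induced distributions. Concretely, I would extract along a countable dense family $\{\varphi_k\}\subset C^\infty_c$ a single compactness--limit witness $W_\ast\in\mathcal{W}[\nu]$ with $T_\rho=T_{W_\ast}$, and then use that the values of the field of interest are pushforwards $(\pi_0,\pi_1-\pi_0)_\#\gamma$ of optimal couplings, for which equality of induced distributions forces equality of the displacement fields $y-x$, to conclude $\rho\in\mathcal{W}[\nu]$. I expect this reduction, rather than the elementary estimates, to be the crux.
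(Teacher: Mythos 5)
Your forward implication (upper semi-continuity implies \eqref{eq:AltUSC}) is correct and follows the paper's route exactly: Lemma \ref{Prop:USC1} plus the estimate of Lemma \ref{Lem:VectorFieldBound}; your treatment of the constant $C$, which depends on the second moment of $V$ and which you make uniform near the compact set $\mathcal{W}[\nu]$, is in fact more careful than the paper, which writes the bound with an unexplained uniform $C$. The genuine gap is in the converse, and you flag it yourself: you obtain only, for each fixed $\varphi$, a $\varphi$-dependent witness with $\langle\nabla\varphi,\rho\rangle\in\langle\nabla\varphi,\mathcal{W}[\nu]\rangle$ (note this step already uses compactness of $\mathcal{W}[\nu]$, which is not among the hypotheses of the lemma), and the upgrade to $\rho\in\mathcal{W}[\nu]$ is left as a sketch. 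Moreover, the key claim of that sketch fails: for values of the form $(\pi_0,\pi_1-\pi_0)_\#\gamma$ with $\gamma$ optimal, equality of induced currents does \emph{not} force equality of measures. Every coupling out of a Dirac mass is optimal, so $\delta_{x_0}\otimes\nu_1$ and $\delta_{x_0}\otimes\nu_2$ with $\nu_1\neq\nu_2$ of equal mean are both pushforwards of optimal plans, are far apart in $W_2$, and induce the same distribution $T_V$. So your proposed repair cannot be completed along those lines, and the ``if'' direction is missing from your proof.

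That said, your diagnosis that the converse ``looks too strong'' for an arbitrary MPVF is exactly right, and it pinpoints where the paper's own proof breaks down. The paper argues by contradiction, sets $\alpha=\mathrm{d}^{W_2}_H(\sigma,\mathcal{W}[\mu])>0$, and then \emph{chooses} $\varphi\in C^\infty_c$ with $\inf_{\xi\in\mathcal{W}[\mu]}\left|\langle\nabla\varphi,\sigma\rangle-\langle\nabla\varphi,\xi\rangle\right|>\alpha-\varepsilon$. No such $\varphi$ need exist, for precisely the reason you give: the functionals $\langle\nabla\varphi,\cdot\rangle$ see only the current $T_V$ and do not separate points of $\mathcal{P}_2(\T(\Rd)^N)$ at the $W_2$ scale. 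Concretely, take $\sigma=\mu\otimes\frac12(\delta_{v_0}+\delta_{-v_0})$ and $\mathcal{W}[\mu]=\{\mu\otimes\delta_0\}$: all tested differences vanish while $W_2(\sigma,\mu\otimes\delta_0)\geq|v_0|>0$. Extending this to an MPVF by $\mathcal{W}[\rho]=\{\rho\otimes\delta_0\}$ for $\rho\neq\mu$ gives a field satisfying \eqref{eq:AltUSC} everywhere (every tested set is $\{0\}$) whose graph is not closed, so the ``if'' direction of the lemma is actually false as stated for general MPVFs; any correct proof would have to use structural properties of the specific $\mathcal{V}$ of \eqref{def:V}, as you anticipated. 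In short, your attempt and the paper's proof stall at the same point --- the difference is that you saw the obstruction while the paper asserts it away. The damage to the paper's results is limited, since Theorem \ref{Thm::Closure} invokes only the ``only if'' direction, which both you and the paper prove correctly.
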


\begin{proof}

Assume $\mathcal{W}$ to be upper semi-continuous.
Given Lemma \ref{Prop:USC1} we have that for any sequence $\mu^n\in\mathcal{P}_2(\Rd)$ converging to $\mu\in\mathcal{P}_2(\Rd)$, and $\varphi\in C^\infty_c(\Rd)$ there exists $C>0$ and it holds
\[
\mathrm{d}^\RR_H\left(\langle\nabla\varphi,\mathcal {W}[\mu^n]\rangle,\langle\nabla\varphi,\mathcal {W}[\nu]\rangle\right)\leq C \|\nabla\varphi\|_{W^{1,\infty}} \, \mathrm{d}^{W_2}_H\left( \mathcal {W}[\mu^n]\rangle,\mathcal {W}[\nu]\right)\,,
\]
which leads to \eqref{eq:AltUSC}.

Now, assume that $\mathcal{W}$ satisfies \eqref{eq:AltUSC} and prove by  contradiction. This grants us the existence of converging sequences $\mu^n\to\mu$ in $\mathcal{P}_2((\Rd)^N)$ and $\sigma_n\to\sigma$ in $\mathcal{P}_2(T(\Rd)^N)$ such that $\sigma_n\in\mathcal{W}[\mu^n]$ but $\sigma\not\in\mathcal{W}[\mu]$.
We consider now $\alpha:=$$\mathrm{d}^{W_2}_H(\sigma,\mathcal{W}[\mu])$ ${>0}$, fix $0<\varepsilon<\alpha/2$, and choose $\varphi\in C^\infty_c(\RR^{\d N})$ such that
\[
\inf_{\xi\in\mathcal {V}[\mu]}\left|\langle\nabla\varphi,\sigma\rangle-\langle\nabla\varphi,\xi\rangle\right|>\alpha-\varepsilon\,.
\]
By the lower semi-continuity of the above we can find $n\in \mathbb{N}$ large enough such that 
\begin{align*}
\mathrm{d}^\RR_H\left(\langle\nabla\varphi,\mathcal {W}[\mu^n]\rangle,\langle\nabla\varphi,\mathcal {W}[\mu]\rangle\right)&=\sup_{\rho\in \mathcal {W}[\mu^n]}\inf_{\xi\in\mathcal {W}[\mu]}\left|\langle\nabla\varphi,\rho\rangle-\langle\nabla\varphi,\xi\rangle\right|\\
&\geq \inf_{\xi\in\mathcal {W}[\mu]}\left|\langle\nabla\varphi,\sigma_n\rangle-\langle\nabla\varphi,\xi\rangle\right|\\
&\geq \alpha-\varepsilon\,.
\end{align*}
For sufficiently small $\ve>0$ we obtain a contradiction to $\mathcal{W}$ being uppersemicontinuous.
\end{proof}

\begin{theorem}\label{Thm::Closure}
The  set $\Sol[\mathcal{V}]$ is closed.
\end{theorem}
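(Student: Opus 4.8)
The plan is to take a sequence $(\mu^n)_{n}\subset\Sol[\mathcal{V}]$ converging to some $\mu$ in $C([0,T],\mathcal{P}_2(\RR^{\d N}))$ (uniformly in $W_2$), and to show $\mu\in\Sol[\mathcal{V}]$. The limit curve inherits the initial datum $\mu_0=\lim_n\mu_0^n\in\mathcal{P}_2(\RR^{\d N})$ and, being a uniform $W_2$-limit, stays in $\mathcal{P}_2$, so it suffices to verify the distributional inclusion. Using the reformulation of \eqref{eq:MDI} through the quasimetric $\mathrm{d}^\RR_H$ noted before Lemma \ref{Prop:USC2} (applied to the restriction to $[0,t]$, which is again a solution), I would reduce the claim to showing, for every $\varphi\in C^\infty_c(\RR^{\d N})$ and every $t\in[0,T]$, that
\[
\langle\varphi,\mu_t-\mu_0\rangle\in\Cl\big(B^t\big),\qquad B^t:=\int_0^t\langle\nabla\varphi,\mathcal{V}[\mu_s]\rangle\,ds,
\]
writing $B^t_n:=\int_0^t\langle\nabla\varphi,\mathcal{V}[\mu^n_s]\rangle\,ds$. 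The left endpoints converge, $\langle\varphi,\mu^n_t-\mu^n_0\rangle\to\langle\varphi,\mu_t-\mu_0\rangle$, since $|\langle\varphi,\mu-\nu\rangle|\le\|\nabla\varphi\|_\infty W_2(\mu,\nu)$, and each $\langle\varphi,\mu^n_t-\mu^n_0\rangle\in\Cl(B^t_n)$ because $\mu^n\in\Sol[\mathcal{V}]$.

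The core of the argument is the one-sided set convergence $\mathrm{d}^\RR_H(B^t_n,B^t)\to 0$, which I would establish pointwise in $s$ and then integrate. For any measurable selection $V^n_s\in\mathcal{V}[\mu^n_s]$, the definition of the quasimetric gives $\inf_{\xi\in\mathcal{V}[\mu_s]}|\langle\nabla\varphi,V^n_s\rangle-\langle\nabla\varphi,\xi\rangle|\le g_n(s)$, where $g_n(s):=\mathrm{d}^\RR_H(\langle\nabla\varphi,\mathcal{V}[\mu^n_s]\rangle,\langle\nabla\varphi,\mathcal{V}[\mu_s]\rangle)$. Since $\mu^n_s\to\mu_s$ in $W_2$ for every $s$, Proposition \ref{p:uppersemi} together with Lemma \ref{Prop:USC2} yields $g_n(s)\to0$ pointwise. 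To integrate this I would first secure a uniform velocity-moment bound: for $V\in\mathcal{V}[\mu^n_s]$ one has $\int_{\T(\Rd)^N}|v|^2\,V(dx,dv)=\sum_i W_2^2(\mu^{n,i}_s,\overline\mu^n_s)$, and uniform $W_2$-convergence on the compact interval $[0,T]$ forces $\sup_{n,s}\sum_i\!\int|x|^2\,d\mu^{n,i}_s<\infty$; testing the barycenter optimality against $\delta_0$ then bounds $\sum_i W_2^2(\mu^{n,i}_s,\overline\mu^n_s)$ by a constant $C$ independent of $n,s$. Hence $0\le g_n(s)\le 2\|\nabla\varphi\|_\infty\sqrt{C}$, and dominated convergence gives $\int_0^t g_n(s)\,ds\to0$.

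To convert this into closeness of the integral sets, given any $b_n=\int_0^t\langle\nabla\varphi,V^n_s\rangle\,ds\in B^t_n$ and $\epsilon>0$ I would select a measurable $\xi_s\in\mathcal{V}[\mu_s]$ with $|\langle\nabla\varphi,V^n_s\rangle-\langle\nabla\varphi,\xi_s\rangle|\le g_n(s)+\epsilon$; then $\int_0^t\langle\nabla\varphi,\xi_s\rangle\,ds\in B^t$ and $|b_n-\int_0^t\langle\nabla\varphi,\xi_s\rangle\,ds|\le\int_0^t g_n(s)\,ds+\epsilon\,t$, whence $\mathrm{d}^\RR_H(B^t_n,B^t)\le\int_0^t g_n(s)\,ds+\epsilon\,t\to0$. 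Combining with $a_n:=\langle\varphi,\mu^n_t-\mu^n_0\rangle\to a:=\langle\varphi,\mu_t-\mu_0\rangle$ and $a_n\in\Cl(B^t_n)$ yields $\inf_{b\in B^t}|a-b|\le|a-a_n|+\mathrm{d}^\RR_H(B^t_n,B^t)\to0$, i.e.\ $a\in\Cl(B^t)$. Since $\varphi$ and $t$ are arbitrary, $\mu\in\Sol[\mathcal{V}]$.

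The delicate point, and the step I expect to be the main obstacle, is the measurable choice of $\xi_s$. Here I would argue that $s\mapsto\mathcal{V}[\mu_s]$ is a measurable multifunction with nonempty closed values (closedness from Proposition \ref{p:uppersemi}, with $s\mapsto\mu_s$ continuous), that $(s,\xi)\mapsto|\langle\nabla\varphi,\xi\rangle-\langle\nabla\varphi,V^n_s\rangle|$ is a Carathéodory integrand (continuity in $\xi$ following from Lemma \ref{Lem:VectorFieldBound}), and hence that the near-minimizing multifunction $s\rightrightarrows\{\xi\in\mathcal{V}[\mu_s]:|\langle\nabla\varphi,\xi\rangle-\langle\nabla\varphi,V^n_s\rangle|\le g_n(s)+\epsilon\}$ admits a measurable selection via a Kuratowski--Ryll-Nardzewski type theorem. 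Checking the underlying measurability and separability hypotheses in $\mathcal{P}_2(\T(\Rd)^N)$ is the technical heart of the proof; the remaining set-convergence bookkeeping is soft.
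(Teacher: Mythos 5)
Your proof is correct, and its skeleton is the one the paper uses: take a uniformly convergent sequence in $\Sol[\mathcal{V}]$, pass to the quasimetric reformulation of the MDI, and use upper semi-continuity of $\mathcal{V}$ (Proposition \ref{p:uppersemi} through Lemma \ref{Prop:USC2}) to transfer the velocity selections $V^n_s\in\mathcal{V}[\mu^n_s]$ of the approximating solutions to nearby elements of $\mathcal{V}[\mu_s]$, then integrate the error. Where you differ is in how the limit passage under the time integral is justified. The paper introduces $R(\sigma)=\mathrm{d}^\RR_H(\langle\nabla\varphi,\sigma\rangle,\langle\nabla\varphi,\mathcal{V}[\mu]\rangle)$, argues that $\mathcal{V}\bigl(\Cl\bigl(\bigcup_n\mu^n([0,T])\bigr)\bigr)$ is compact (citing \cite{aubin1984diffinclusions}), deduces uniform continuity of $R$, and thereby gets the smallness $R(\sigma^n_t)<\varepsilon_2$ uniformly in $t$ for large $n$; you avoid this compactness-of-the-image step entirely, using only pointwise convergence $g_n(s)\to 0$ plus a uniform bound on $g_n$ (from the velocity-moment estimate obtained by testing barycenter optimality against $\delta_0$), and then dominated convergence. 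Your route is more elementary and self-contained; the one caveat is that in the moment bound you must also note that $\lambda$ is bounded away from zero on the relatively compact set $\{\mu^{n,i}_s\}$ in order to pass from the $\lambda$-weighted optimality inequality to the unweighted bound $\sum_i W_2^2(\mu^{n,i}_s,\overline\mu^n_s)\le C$ — this follows from continuity and positivity of $\lambda$ on a compact set, but it should be said. Two further points are in your favor: you verify the inclusion at every intermediate time $t$ (the paper checks only $t=T$, consistent with its reformulation), and you explicitly identify and resolve the measurable-selection issue via a Kuratowski--Ryll-Nardzewski-type theorem, a step the paper's final estimate $\inf_{\sigma_t\in\mathcal{V}[\mu_t]}\left|\int_0^T\langle\nabla\varphi,\sigma^n_t-\sigma_t\rangle\,dt\right|\leq T\varepsilon_2$ uses implicitly without comment; so on this technical point your argument is actually more complete than the paper's.
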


\begin{proof}
Recall $\mathcal{V}$ is an upper semi-continuous MPVF with compact values,
and consider a converging sequence of curves of measures $\mu^n\to\mu$, $\mu^n\in\Sol[\mathcal{V}]$.
We fix any $\varphi\in C^\infty_c(\RR^{\d N})$, and introduce the function $R:\mathcal{P}_2(T\RR^{\d N})\to\RR$ defined as 
\begin{align*}
R(\sigma):= \mathrm{d}^\RR_{H}(\langle \nabla\varphi, \sigma\rangle,\langle \nabla\varphi, \mathcal {V}[\mu]\rangle)\,
\end{align*}
which is continuous because it is the distance from a closed set.
Since the range of all the $\mu^n$ and $\mu$ is a compact set, the set
\[
\mathcal{V}\left(\Cl\left(\bigcup_{n\in \mathbb{N}}\mu^n([0,T]) \right)\right)
\]
is compact because it is closed thanks to $\mathcal{V}$ being upper semi-continuous and has values in a compact set as shown in \cite{aubin1984diffinclusions}. As a consequence, $R$ is uniformly continuous.
For any $\varepsilon_1>0$ we can find $N_1\in\N$ such that  $\sup_{t\in[0,T]}W_2(\mu^n_t,\mu_t)<\varepsilon_1$ for all $n>N_1$.
Since $\mu^n$ are solutions to the MDI \eqref{eq:MDI}, let us select $\sigma_t^n\in\mathcal{V}[\mu_t^n]$ such that
\[
\langle \varphi,\mu^n_T-\mu^n_0\rangle=\int_0^T\langle\nabla\varphi,\sigma^n_t\rangle\, dt\,,
\]
leading to $\lim_{n\to\infty}R(\sigma^n_t)=0$ for almost every $t\in[0,T]$.
Now, for any $\varepsilon_2>0$ we can find $N_2\in\N$ such that $R(\sigma^n_t)<\varepsilon_2$ for all $n>N_2$ and almost every time $t\in[0,T]$.
By choosing $n>\max\{N_1,N_2\}$ we can write
\begin{align*}
\mathrm{d}^\RR_H\bigg(\langle \varphi,\mu_T-\mu_0\rangle, &\int_0^T\langle\nabla\varphi,\mathcal {V}[\mu_t]\rangle dt\bigg)=\inf_{\sigma_t\in\mathcal{V}[\mu_t]}\left|\langle \varphi,\mu_T-\mu_0\rangle-\int_0^T\langle\nabla\varphi,\sigma_t\rangle dt \right| \\
&\leq \left|\langle \varphi,(\mu_T-\mu_0)-(\mu^n_T-\mu^n_0)\rangle\right|+\inf_{\sigma_t\in\mathcal{V}[\mu_t]}\left|\int_0^T\langle\nabla\varphi,\sigma^n_t-\sigma_t\rangle dt \right| \\
&\leq 2\|\nabla\varphi\|_{\infty}\varepsilon_1+\inf_{\sigma_t\in\mathcal{V}[\mu_t]}\left|\int_0^T\langle\nabla\varphi,\sigma^n_t-\sigma_t\rangle dt \right| \\
&\leq 2\|\nabla\varphi\|_{\infty}\varepsilon_1+T\varepsilon_2
\end{align*}
and conclude that $\mu\in\Sol[\mathcal{V}]$. This proves that the set of solutions $\Sol[\mathcal{V}]$ is closed. 
\end{proof}

\section{Main result}
\label{Sec:mainresult}
In the previous section, we used the time-discrete scheme \eqref{eq:iterativeMDE} to construct solutions to the MDI \eqref{eq:MDI} with initial data given by compactly supported agents $\mu^i_0 \in \mathcal{P}_{2}^{a.c.}(\Rd) \cap L^\infty(\mathcal{L}^\d)$, $i = 1, \dots,N$ (Theorem \ref{t:ac}). 
We extend the existence result to the general case  $\mu^i_0 \in \mathcal{P}_2(\Rd)$, $i=1,\dots,N$ keeping the assumption on the compact support. We perform a mollification procedure and exploit the closeness property of the solution set established in Theorem \ref{Thm::Closure}.

\begin{theorem}
\label{t:mdi}
Assume $\lambda \in C_b(\mathcal{P}_{2}(\Rd), (0,\infty))$ and consider a time horizon $T\in(0,\infty)$.
Given an initial datum $\mu_{0} = \otimes_{i=1}^N\mu_{0}^i$ such that $\mu_{0}^i  \in  \mathcal{P}_{2}(\Rd)$ are compactly supported, then there exists a solution $\mu \in \Lip \left([0,T], \mathcal{P}_2((\Rd)^N)\right)$ to the MDI \eqref{eq:MDI} with $\lim_{t\to 0}\mu_t = \mu_0$.
\end{theorem}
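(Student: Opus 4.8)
The plan is to reduce to the absolutely continuous case already settled in Theorem~\ref{t:ac} by a mollification argument, and then to transfer the conclusion back to the original data using the closedness of the solution set established in Theorem~\ref{Thm::Closure}. Concretely, I would fix a standard mollifier $\rho_\ve\in C^\infty_c(\Rd)$ with unit mass supported in $B(0,\ve)$ and regularize each initial agent by convolution, $\mu_0^{i,\ve}:=\mu_0^i*\rho_\ve$. Each $\mu_0^{i,\ve}$ is then absolutely continuous with bounded density, compactly supported in the $\ve$-enlargement of $\supp(\mu_0^i)$, and satisfies $W_2(\mu_0^{i,\ve},\mu_0^i)\to 0$ as $\ve\to0$. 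For $\ve<1$ all these regularizations are supported in the single fixed compact set $\Omega_1:=\Cl\big(\textup{Conv}(\bigcup_i\supp(\mu_0^i))+B(0,1)\big)$, which is the key point making the subsequent bounds uniform.

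Next I would apply Theorem~\ref{t:ac} to obtain, for each $\ve\in(0,1)$, a solution $\mu^\ve\in\Lip([0,T],\mathcal{P}_2^{a.c.}((\Rd)^N))$ of the MDI~\eqref{eq:MDI} with initial datum $\mu_0^\ve=\otimes_{i=1}^N\mu_0^{i,\ve}$. The crucial observation is that the quantitative bounds behind Theorem~\ref{t:ac} are uniform in $\ve$: by Lemma~\ref{lem:SolutionsStayInsideCompact} each $\mu^\ve$ stays supported in $\Omega_1^N$, and by the geodesic-speed estimate~\eqref{eq:lipconst} each $\mu^\ve$ is Lipschitz with constant at most $\sqrt{N}\,\diam(\Omega_1)$, independently of $\ve$. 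Thus the family $\{\mu^\ve\}_{\ve\in(0,1)}$ is equi-Lipschitz and, at each time, takes values in the compact space $\mathcal{P}_2(\Omega_1^N)$.

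With these uniform bounds in place, I would invoke the metric-space Ascoli--Arzel\`a theorem (as in \cite[Theorem~18]{kelley2017general}) to extract a sequence $\ve_j\to0$ along which $\mu^{\ve_j}$ converges uniformly to some $\mu\in\Lip([0,T],\mathcal{P}_2(\Omega_1^N))$. Since every $\mu^{\ve_j}$ belongs to $\Sol[\mathcal{V}]$ and this set is closed under uniform convergence by Theorem~\ref{Thm::Closure}, the limit $\mu$ solves~\eqref{eq:MDI}. Finally, combining $W_2(\mu_0^{\ve_j},\mu_0)\to0$ with the uniform convergence identifies the initial condition $\lim_{t\to0}\mu_t=\mu_0=\otimes_{i=1}^N\mu_0^i$, completing the argument.

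The step I expect to be the main obstacle is securing the \emph{uniformity in $\ve$} of both the support enclosure and the Lipschitz constant: without a common compact set $\Omega_1$ and a common modulus of continuity, Ascoli--Arzel\`a would not apply and one could not pass to the limit. Everything hinges on the fact that mollification enlarges supports only by $\ve$, so that Lemma~\ref{lem:SolutionsStayInsideCompact} and~\eqref{eq:lipconst} furnish bounds depending only on $\diam(\Omega_1)$ and $N$; once this is verified, the closedness result of Theorem~\ref{Thm::Closure} upgrades the limit into a genuine solution with essentially no further work, the initial-data identification being the only remaining routine check.
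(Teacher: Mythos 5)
Your proposal is correct and follows essentially the same route as the paper's own proof: mollify the initial agents to land in the absolutely continuous setting of Theorem~\ref{t:ac}, use the uniform support enclosure and the uniform Lipschitz bound $\sqrt{N}\,\diam(\Omega_1)$ to extract a limit via the metric Ascoli--Arzel\`a theorem, and invoke the closedness of $\Sol[\mathcal{V}]$ (Theorem~\ref{Thm::Closure}) to identify the limit as a solution with the correct initial datum. Your variant of mollifying each marginal $\mu_0^i$ separately (rather than the product measure $\mu_0$ on $(\Rd)^N$) is a tidy touch, since it visibly preserves the product structure required by Theorem~\ref{t:ac}, but it is the same argument.
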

\begin{proof}
In the following, for any $\nu\in\mathcal{P}(  \Omega ^{  N})$, we denote with $\Sol[\mathcal{V}](\nu)\subset \Sol[\mathcal{V}]$ the set of solutions to \eqref{eq:MDI} with initial data $\nu$. Thanks to Theorems \ref{t:ac} and \ref{Thm::Closure}, and the fact that Wasserstein spaces are separable, the set $\Sol[\mathcal{V}](\nu)$ is upper semi-continuous.

We define, as before, $\Omega:= \textup{Conv} \left(  \bigcup_{i=1}^N \textup{supp}(\mu_{0}^i)\right)$ and introduce for any $n\in\mathbb{N}$ the neighbourhoods $\Omega_{1/n}=\cup_{x\in\Omega}B(x,1/n)$. Consider a family of mollifiers $\varphi_{n}\in C^{\infty}_c((\Rd)^N)$ with support in $\Omega_{1/n}$ such that we have $\varphi_n\star\mu_0\in\mathcal{P}_2(\Omega^N_{1/n})$ and
\[
W_2(\varphi_n\star\mu_0, \mu_0) \to 0
\]
Since we have existence of solutions for absolutely continuous case (Theorem \ref{t:ac}), there exists a sequence of solutions
\[
\mu^n\in\Sol[\mathcal {V}](\varphi_n\star\mu_0)\,.
\]
The sequence admits a limit point since it is supported on a compact set and $\Lip(\mu^n)\leq \sqrt{N}\diam(\Omega_{1/n})\leq \sqrt{N}(\diam(\Omega)+1)$.
Therefore, it admits a convergent subsequence thanks to Ascoli--Arzel\`a, and we have that the limit $\mu$ is a solution to \eqref{eq:MDI} with $\lim_{t\to 0}\mu_t = \mu_0$ thanks to Theorem \ref{Thm::Closure}.
\end{proof}

\begin{remark}
We underline that our existence result is constructive (via the iterative scheme \eqref{eq:iterativeMDE}) only for the case of absolutely continuous agents. For numerical purposes it may be interesting to consider directly a general time discretization of the MDI without any assumption on the initial data. Starting from $\mu^i_{(0)} = \mu^i_0 \in \mathcal{P}_2(\Rd)$, a simple explicit Euler discretization  with time step $\tau\in(0,1)$ which generalizes \eqref{eq:iterativeMDE} is given by
\begin{equation}
\begin{cases}
\overline \mu_{(k)} \in \Bar(\lambda \mathbb{P}_{\mu_{(k)}}) \\
\mu_{(k+1)}^i  =   (\pi_{\tau})_\# \gamma_{(k)}^{i} & \textup{for some} \;\; \gamma_{(k)}^{i} \in \Gamma_o \left(\mu_{(k)}^{i}, \overline \mu_{(k)}  \right) \qquad i = 1,\dots,N\,.
\end{cases}
\label{eq:iterativeMDI}
\end{equation}
where $\pi_\tau(x,y)  = (1-\tau)x + \tau y$, and $\mu_{(k)} = \otimes_{i=1}^N \mu_{(k)}^i \in \mathcal{P}_2((\Rd)^N)$ denotes the collection of agents. Whether \eqref{eq:iterativeMDI} converges to solutions to \eqref{eq:MDI} is left for future work.

\end{remark}

\begin{remark}
Consider the consensus model for agents in $\Rd$ evolving according to the system of ODEs 
\begin{equation}
\frac{d}{dt} x^i_t =  \frac1{N} \sum_{i=1}^N (x_t^j -x^i_t ) \quad i = 1,\dots,N \,,
\label{eq:odeconsensus}
\end{equation}
which is a simplified, time-continuous version of the Kuramoto \cite{kuramoto1984} and Hegselmann--Krause \cite{Hegselmann2002} models. Each agent moves towards the unweighted mean $\overline{x}_t := 1/N\sum_i x_t^i$.
Formally, we evaluate \eqref{eq:MDI_N_weak} at $\mu^i=\delta_{x^i}$, then the system \eqref{eq:MDI_N_weak} reads
\[
\frac{d}{dt}\varphi(x^i_t)=\nabla\varphi(x^i_t)\cdot(\Bar(\lambda\mathbb{P}_{\mu_t})-x^i_t)\qquad\textup{for}\quad i =  1,\dots,N
\]
that simplifies to
\[
\frac{d}{dt} x^i_t =  \Bar(\lambda\mathbb{P}_{\mu_t}) -x^i_t\qquad \textup{for}\quad i =  1,\dots,N\,.
\]

For $\lambda = 1/N$, it holds $\Bar(\lambda\mathbb{P}_{\mu_t}) = \overline{x}_t$. Formally, solutions to \eqref{eq:odeconsensus} are solutions to the MDI \eqref{eq:MDI} when agents are Dirac deltas. The same can be shown to hold in presence on non-constant weights $\lambda$.
\end{remark}

\begin{remark}
At no point we can assume uniqueness of the flow, as the barycenter and the optimal transport plans are in general not unique. As an example, consider the  case of two atomic agents in the plane
\[
\mu^1=\frac{1}{2}(\delta_{(0,0)}+\delta_{(1,1)})\quad\mu^2=\frac{1}{2}(\delta_{(0,1)}+\delta_{(1,0)}).
\]
This configuration admits infinite barycentres, i.e., all convex combinations of the following two measures
\[
m^1=\frac{1}{2}(\delta_{(0,1/2)}+\delta_{(1,1/2)})\quad m^2=\frac{1}{2}(\delta_{(1/2,0)}+\delta_{(1/2,1)}),
\]
are barycenters with weights $\lambda = 1/2$.
It is clear that two solutions starting from the measure $\mu^1\otimes\mu^2$ are obtained as observed in Section \ref{SubSec:GeodesicSolutions} connecting both $\mu^1$ and $\mu^2$ to the same barycenter $m^i$, $i =1,2$.
Interestingly, there is no need to choose the same barycentre for different agents. This produces a third more interesting solution, illustrated in Figure \ref{fig:ex_uniqueness} (right).
\end{remark}

\begin{figure}
\includegraphics[width = 1\linewidth]{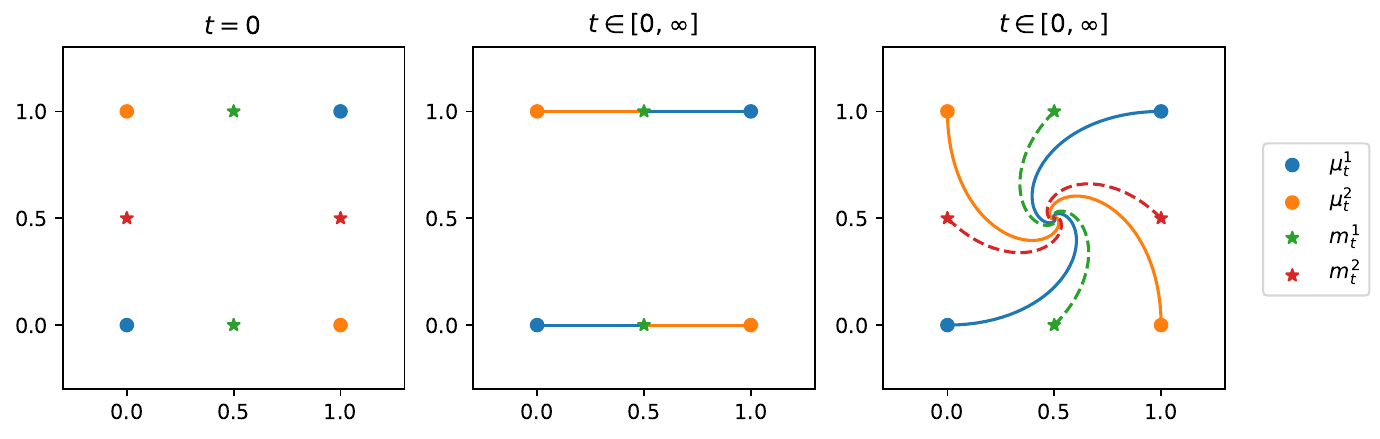}
\caption{Example of non-uniqueness of solutions to the MDI \eqref{eq:MDI}. On left, the initial agents positions $\mu^1_0,\mu_0^2$ and two (equally weighted) barycenters $m_0^1,m_0^2$. On the centre and on the right, trajectories of two solutions to the same MDI.}
\label{fig:ex_uniqueness}
\end{figure}

\section{Application to consensus dynamics for optimization} 
\label{sec:num}

In this section, we apply the proposed consensus multi-agent dynamics \eqref{eq:MDI} in the context of optimization, inspired by the class of Consensus-Based Optimization (CBO) \cite{pinnau2017consensus} methods developed in Euclidean settings. In the numerical experiments, we illustrate how the proposed multi-agent system is used as a computational tool to solve minimization problems for probability-valued objectives.

CBO algorithms exploit a system of interacting agents (also called \textit{particles}) to compute global minima of a given objective function. During the evolution, the agents instantaneously move towards a consensus point given by a weighted average of their positions. Different weights are assigned to the $N$ agents, depending on their objective value. In CBO, the weights are characterized by the Boltzmann--Gibbs distribution associated with $\E$.

Let $\E: \mathcal{P}_2(\Omega) \to \RR$ be a objective function, and consider the corresponding optimization problem
\begin{equation}
\mu^* \in \underset{\mu \in \mathcal{P}_2(\Omega)}{\textup{argmin}}\, \E(\mu)\,.
\label{eq:minpb}
\end{equation}
A CBO dynamics in $\mathcal{P}_2(\Rd)$, considers the consensus multi-agent system \eqref{eq:MDI} with  a particular weight function $\lambda = \lambda^\alpha$ given by the Boltzmann--Gibbs distribution 
\begin{equation}
\lambda^\alpha(\mu) := e^{-\alpha \E(\mu)}
\label{eq:cboweights}
\end{equation}
at temperature $1/\alpha$, for some parameter $\alpha>0$. 
We note that, for a collection of agents $\mu^1, \dots, \mu^N \in \mathcal{P}(\Rd)$, any sequence of barycenters
$
\overline\mu^\alpha \in \Bar(\lambda^\alpha \mathbb{P}_{\mu})$,  $\alpha>0
$
converges to the best agent among the ensemble 
\[
\overline\mu^\alpha \longrightarrow \underset{\mu^i,\, i=1, \dots,N}{\textup{argmin}} \,\E(\mu^i)\,, \qquad \textup{as} \quad \alpha \to \infty
\]
provided the (arg)minimum among the agents is uniquely determined. As the barycenter instantaneously moves (for $\alpha \gg 1)$ towards the agent with best location in terms of objective value, the multi-agent system is heuristically expected to create consensus in a neighborhood of a minimizer. We note that this argument is made rigorous for CBO in Euclidean settings in \cite{fornasier2021consensusbased}, by relying on a mean-field approximation of the dynamics.

To simulate the evolution of the multi-agent CBO system \eqref{eq:MDI}, we approximate each agent by $n$ particles (Dirac deltas) using an the initial data
\begin{equation*}
\mu^{i,n}_{0} = \frac 1n \sum_{\ell=1}^n \delta_{x_{0}^{i, \ell}}\,\qquad \textup{for}\quad i = 1, \dots,N\,, 
\end{equation*}
with $x_0^{i,\ell} \in \Rd$, $\ell = 1, \dots, n$, $i=1,\dots,N$, and use the time-discrete iteration \eqref{eq:iterativeMDI}.
We note that CBO methods in Euclidean space typically include a stochastic component in the agents evolution to favor exploration in the search space. In our experiments we also include stochasticity by perturbing the position of the particles $x^{\ell,i}_{(k)}$ at each step. The numerical strategy is illustrated in detail in Algorithm \ref{alg:cbo}.

\begin{algorithm}
\KwData{$\E$ (objective function), $N$ (\# agents), $n$ (\# particles per agent), $\tau$ (step size), $\sigma_1, \sigma_2>0$ (stochastic parameters), $\alpha>0$ (weights parameter), $k_{\max}$ (maximum iteration)}
\KwResult{$\overline{\mu}^\alpha_{(k)}$}

Sample $x_0^{i,\ell} \sim \mathcal{N}(0,I_\d)$ for all $i=1,\dots,N$ and $\ell=1,\dots,n$\;

$\mu^{i,n}_{(0)} = (1/n)\sum_{\ell = 1}^n \delta_{x_0^{i,\ell}}\;$ for all $i=1,\dots,N$

\For{$k = 0,1,\dots k_{\max}$}
{
Compute weights $\lambda^\alpha(\mu_{(k)}^{i,n}), i = 1, \dots, N$ according to \eqref{eq:cboweights}\;

Compute a $n$-particle approximation of a solution to the barycenter problem:\;

$\qquad \overline{\mu}_{(k)}^\alpha \approx \textup{argmin}_{\nu}
\sum_i \lambda^\alpha(\mu_{(k)}^{i,n})\, W_2^2(\nu, \mu_{(k)}^{i,n}),\quad \overline{\mu}^\alpha_{(k)} = (1/n)\sum_{\ell = 1}^n \delta_{\overline{x}_{(k)}^{\alpha}} 
$\;

\For{i = 1, \dots, N}{
Compute optimal coupling $G^i_{(k)}:[n] \to [n]$ between $\{x_{(k)}^{i,\ell}\}_{\ell=1}^n$ and $\{\overline{x}_{(k)}^{\alpha,\ell}\}_{\ell=1}^n$\;

Sample random vector $\xi_{(k)}^i \sim \mathcal{N}(0,I_\d)$\;

Compute stochastic strength $\sigma_{(k)}^{i} = \sigma_1 (W_2(\mu^{i,n}_{(k)}, \overline{\mu}^{\alpha,n}_{(k)}) + \sigma_2)$\;

Update agent:\;

$\quad x^{i,\ell}_{(k+1)} = x^{i,\ell}_{(k)} + \tau \left ( \overline{x}_{(k)}^{\alpha,G^i_{(k)}(\ell)}  - x^{i,\ell}_{(k)}\right) + \sigma_{(k)}^i \xi_{(k)}^i,\quad \ell = 1, \dots,n $\;

$\quad \mu^{i,n}_{(k+1)} = (1/n)\sum_{\ell = 1}^n \delta_{x_{(k+1)}^{i,\ell}}$\;
}
}
\caption{CBO algorithm for measure-valued, atomic agents}
\label{alg:cbo}
\end{algorithm}

\begin{figure}[h]
\begin{subfigure}{1\linewidth}
  \centering
  \includegraphics[width=1\linewidth]{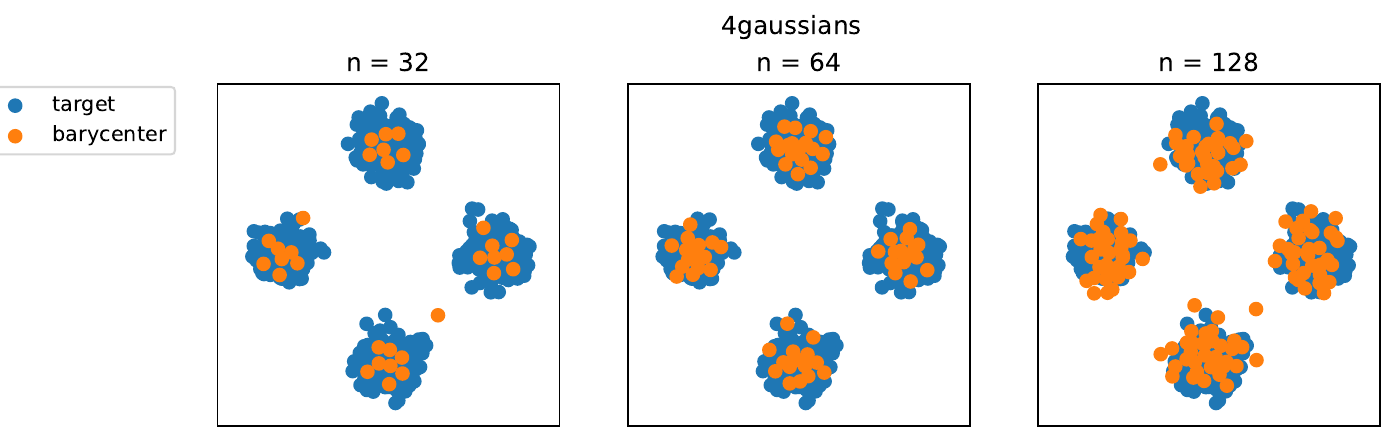}
\end{subfigure}
\\ 
\smallskip 
\begin{subfigure}{1\linewidth}
  \centering
  \includegraphics[width=1\linewidth]{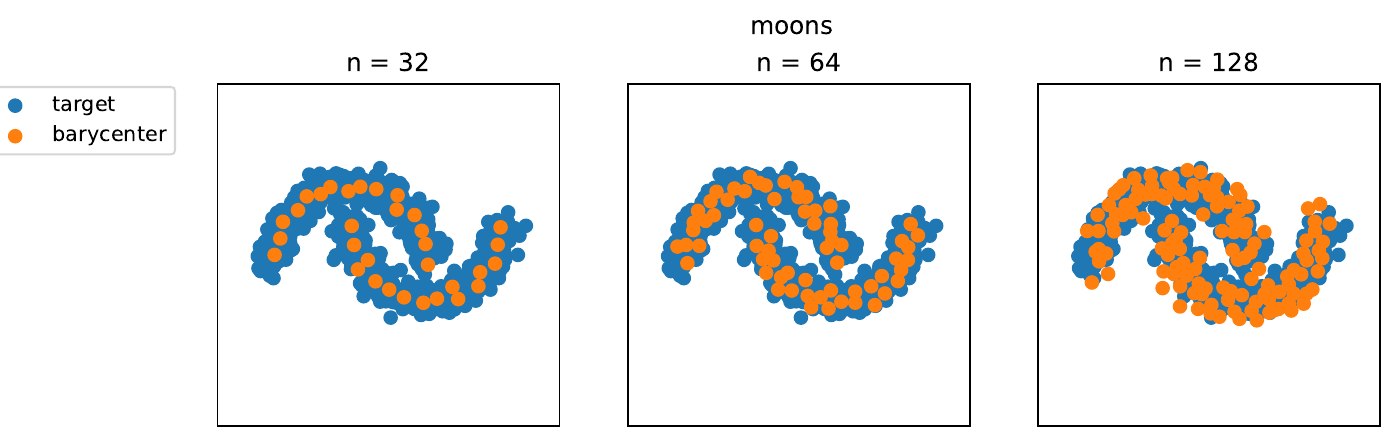}
\end{subfigure}%
 \\ 
\smallskip
\begin{subfigure}{1\linewidth}
  \centering
  \includegraphics[width=1\linewidth]{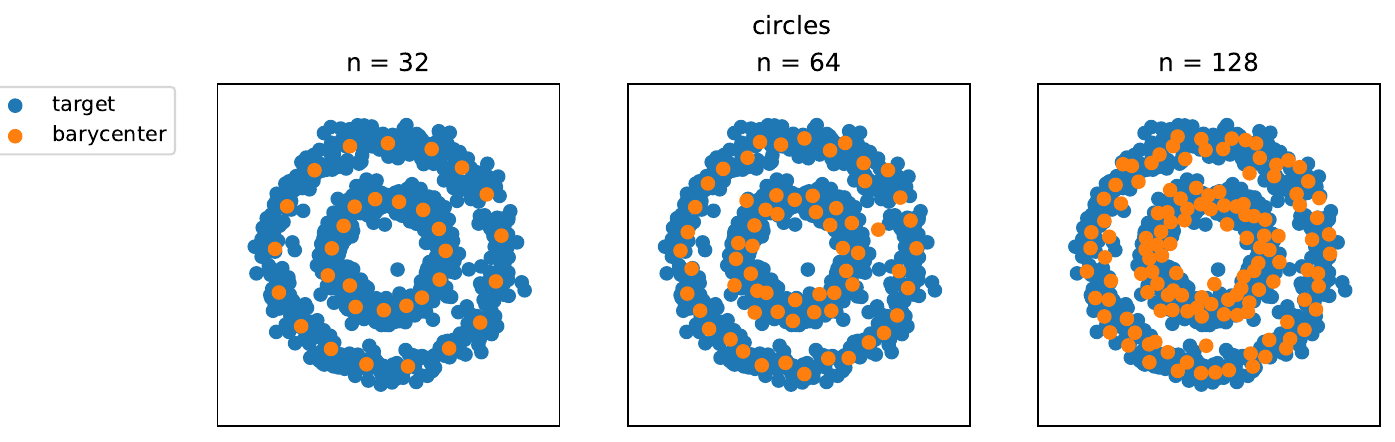}
\end{subfigure}%
\caption{Qualitatively results of Algorithm \ref{alg:cbo} for three different optimization problems. In blu, the target measure the agents aim to approximate, while in orange the computed solution (the final configuration of the barycenter). Parameters are set to $N = 30$, $n = 32,64,128$, $\lambda = 0.1$, $\sigma_1 = 0.3$, $\sigma_2 = 0.1$, $k_{\max} = 3000$, $\alpha = 10^6$. }
\label{fig:sol}
\end{figure}

To validate the algorithm, we consider the problem of approximating a target distributions $\nu^M\in \mathcal{P}(\mathbb{R}^2)$ made of $M=2000$ atomic points representing different 2-dimensional shapes. The objective, which we consider to be given as a black-box function, correspond to the Wasserstein distance from such target measure $\E(\cdot) = W_2(\cdot, \nu^M)$.
For the particle approximation of the agents, we employ different numbers $n$ of particles. The remaining algorithm parameters are set to $\tau=0.1, \sigma_1=0.3$, $\sigma_2 = 0.1$.  Figure \ref{fig:sol} shows the best agent in terms of objective $\E$, after $k_{\max} = 3000$ iterations for the three different shapes considered and for a varying numbers $n=32,64,128$ of particles per agent. We note that the algorithm is able to approximate the target shapes, independently on the number of particles used in the numerical implementation. For quantitatively results, we refer to Figure \ref{fig:obj} which shows the objective value of the best agent during the computation.  

\begin{remark} The update rule \eqref{eq:iterativeMDI} requires the computation of optimal tranport plans and barycenters between atomic measure for which many efficient algorithms have been developed in recent years. We refer to the monograph \cite{cuturi2019computational} for a detail review of the most established methods. In our experiment we used the \texttt{lp.free{\textunderscore}support{\textunderscore}barycenter}
function of the Python Optimal Transport Library \cite{flamary2021pot}, which is based on the algorithms proposed in \cite{cuturi14fast,alvarez2016fixed}. 

It is interesting to note, though, that approximation via particle systems or domain discretization is not the only way to numerically simulate the agents' dynamics. Several works, for instance, propose continuous optimal transport solvers with solutions parametrized by reproducing kernels \cite{genevay2016stochastic}, fully-connected neural networks  \cite{seguy2018large}, or   input convex neural networks \cite{Makkuva2020}.
\end{remark}

\begin{figure}[h]
  \centering
  \includegraphics[width=0.32\linewidth]{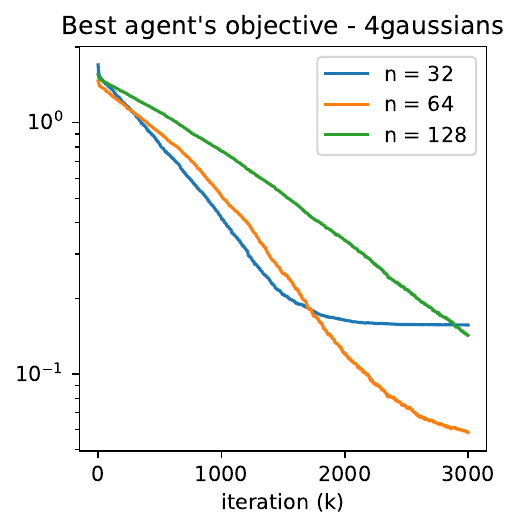}
  \includegraphics[width=0.32\linewidth]{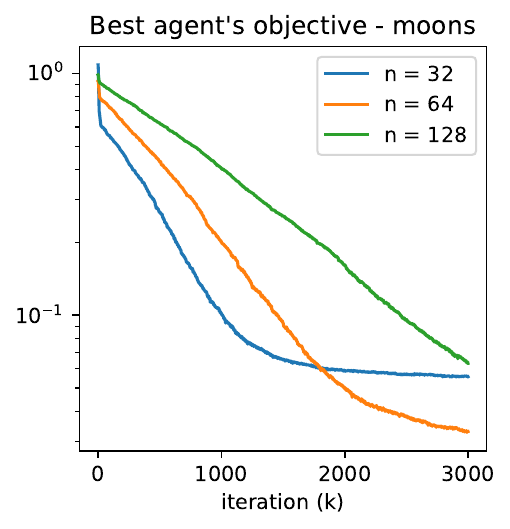}
  \includegraphics[width=0.32\linewidth]{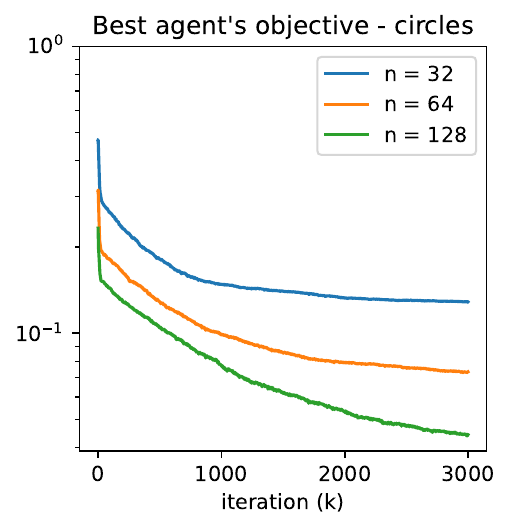}
\caption{Objective value of the best agent among the ensemble, as a function of iterative step, for the three different optimization problems considered.  Parameters are set to $N = 30$, $n = 32,62,128$, $\lambda = 0.1$, $\sigma_1 = 0.3$, $\sigma_2 = 0.1$, $k_{\max} = 3000$,  $\alpha = 10^6$. }
\label{fig:obj}
\end{figure}

\section{Conclusion and outlook}

In this work, we  presented a mathematical model for multi-agent dynamics of consensus type in the 2-Wasserstein space. The model describes agents moving along geodesics towards a common weighted barycenter.
Unlike previous works \cite{doucet2014,doucet2021,bullo2023}, we have considered a time-continuous evolution of the agents. The interaction is described by a system of Measure Differential Inclusions \cite{piccoli2018mdi}, and we have shown how to construct solutions both for absolutely continuous measures and for general ones, using an iterative scheme and under the assumption of compactly supported agents.

 Inspired by Consensus-Based Optimization \cite{pinnau2017consensus}, we applied the model to describe an optimization algorithm for black-box minimization problems over $\mathcal{P}_2(\Rd)$. The algorithm simply consists of the proposed multi-agent dynamics for a specific choice of the barycenter weights. We presented an application in the context of approximation of target measures showing the validity of the algorithm.

The work poses several interesting questions, both from theoretical and practical points of view which we plan to address. First of all, we aim to analyze the multi-agent system, in particular, to find sufficient conditions on the barycenter weights for the creation of consensus as $t \to \infty$ and to investigate the stability of eventual stationary states. To this aim, it may be beneficial to perform a kinetic or mean-field approximation of the agents' interaction. In Euclidean settings, this is a typical approach to studying multi-agent systems with large population size, through which one can gain a deeper mathematical understanding of the system dynamics, see, for instance, \cite{fetecau2011collective,pareschi13, pinnau2017consensus}.

After deriving a kinetic description of the multi-agent system, the proposed multi-agent system \eqref{eq:MDI} can be seen as a Monte Carlo approximation of the kinetic density evolution \cite{pareschi13}. This would allow us to employ well-known strategies in the simulation of particle systems to reduce the computational cost of the update step, such as random batch techniques \cite{AlPa,JLJ}. We remark that the reduction of the computational cost for the simulation of the multi-agent systems is of paramount importance for applications, as the dynamics require solving $\mathcal{O}(N)$ optimal transport problems per iterative step.
Lastly, we aim to include stochastic behavior in the proposed consensus multi-agent system. This is particularly relevant, for instance, in the context of optimizing dynamics,  where stochasticity is needed for the exploration of the search space.

\bibliographystyle{siamplain}
\bibliography{bibfile}

\begin{thebibliography}{10}

\bibitem{carlier2011bary}
{\sc M.~Agueh and G.~Carlier}, {\em Barycenters in the {W}asserstein space},
  SIAM Journal on Mathematical Analysis, 43 (2011), pp.~904--924,
  \url{https://doi.org/10.1137/100805741},
  \url{https://doi.org/10.1137/100805741},
  \url{https://arxiv.org/abs/https://doi.org/10.1137/100805741}.

\bibitem{AlPa}
{\sc G.~Albi and L.~Pareschi}, {\em Binary interaction algorithms for the
  simulation of flocking and swarming dynamics}, Multiscale Modeling \&
  Simulation, 11 (2013), pp.~1--29, \url{https://doi.org/10.1137/120868748}.

\bibitem{ambrosio2000functions}
{\sc L.~Ambrosio, N.~Fusco, and D.~Pallara}, {\em Functions of bounded
  variation and free discontinuity problems}, Oxford university press, 2000.

\bibitem{ambrosio2005gradient}
{\sc L.~Ambrosio, N.~Gigli, and G.~Savar{\'e}}, {\em Gradient flows: in metric
  spaces and in the space of probability measures}, Springer Science \&
  Business Media, 2005.

\bibitem{wassGAN}
{\sc M.~Arjovsky, S.~Chintala, and L.~Bottou}, {\em {W}asserstein generative
  adversarial networks}, in Proceedings of the 34th International Conference on
  Machine Learning, D.~Precup and Y.~W. Teh, eds., vol.~70 of Proceedings of
  Machine Learning Research, PMLR, 06--11 Aug 2017, pp.~214--223,
  \url{https://proceedings.mlr.press/v70/arjovsky17a.html}.

\bibitem{aubin1984diffinclusions}
{\sc J.-P. Aubin and A.~Cellina}, {\em Differential inclusions. {Set}-valued
  maps and viability theory}, vol.~264 of Grundlehren Math. Wiss., Springer,
  Cham, 1984.

\bibitem{becchetti2020consensus}
{\sc L.~Becchetti, A.~Clementi, and E.~Natale}, {\em Consensus dynamics: An
  overview}, SIGACT News, 51 (2020), p.~58–104,
  \url{https://doi.org/10.1145/3388392.3388403},
  \url{https://doi.org/10.1145/3388392.3388403}.

\bibitem{doucet2014}
{\sc A.~N. Bishop and A.~Doucet}, {\em Distributed nonlinear consensus in the
  space of probability measures}, IFAC Proceedings Volumes, 47 (2014),
  pp.~8662--8668,
  \url{https://doi.org/https://doi.org/10.3182/20140824-6-ZA-1003.00341},
  \url{https://www.sciencedirect.com/science/article/pii/S1474667016429800}.
\newblock 19th IFAC World Congress.

\bibitem{doucet2021}
{\sc A.~N. Bishop and A.~Doucet}, {\em Network consensus in the {W}asserstein
  metric space of probability measures}, SIAM Journal on Control and
  Optimization, 59 (2021), pp.~3261--3277,
  \url{https://doi.org/10.1137/19M1268252},
  \url{https://doi.org/10.1137/19M1268252},
  \url{https://arxiv.org/abs/https://doi.org/10.1137/19M1268252}.

\bibitem{Bonneel2023imaging}
{\sc N.~Bonneel and J.~Digne}, {\em A survey of optimal transport for computer
  graphics and computer vision}, Computer Graphics Forum, 42 (2023),
  pp.~439--460, \url{https://doi.org/https://doi.org/10.1111/cgf.14778},
  \url{https://onlinelibrary.wiley.com/doi/abs/10.1111/cgf.14778},
  \url{https://arxiv.org/abs/https://onlinelibrary.wiley.com/doi/pdf/10.1111/cgf.14778}.

\bibitem{bonnet2021diff}
{\sc B.~Bonnet and H.~Frankowska}, {\em Differential inclusions in
  {W}asserstein spaces: {T}he {C}auchy-{L}ipschitz framework}, Journal of
  Differential Equations, 271 (2021), pp.~594--637,
  \url{https://doi.org/https://doi.org/10.1016/j.jde.2020.08.031},
  \url{https://www.sciencedirect.com/science/article/pii/S0022039620304757}.

\bibitem{gualandi2020}
{\sc P.-Y. Bouchet, S.~Gualandi, and L.-M. Rousseau}, {\em Primal heuristics
  for {W}asserstein barycenters}, in Integration of Constraint Programming,
  Artificial Intelligence, and Operations Research: 17th International
  Conference, CPAIOR 2020, Vienna, Austria, September 21–24, 2020,
  Proceedings, Berlin, Heidelberg, 2020, Springer-Verlag, p.~239–255,
  \url{https://doi.org/10.1007/978-3-030-58942-4_16},
  \url{https://doi.org/10.1007/978-3-030-58942-4_16}.

\bibitem{carlier2022quantitative}
{\sc G.~Carlier, A.~Delalande, and Q.~M{\'e}rigot}, {\em Quantitative stability
  of barycenters in the wasserstein space}, Probability Theory and Related
  Fields, 188 (2024), pp.~1257--1286,
  \url{https://doi.org/10.1007/s00440-023-01241-5},
  \url{https://doi.org/10.1007/s00440-023-01241-5}.

\bibitem{carrillo2019biology}
{\sc J.~A. Carrillo, H.~Murakawa, M.~Sato, H.~Togashi, and O.~Trush}, {\em A
  population dynamics model of cell-cell adhesion incorporating population
  pressure and density saturation}, Journal of Theoretical Biology, 474 (2019),
  pp.~14--24, \url{https://doi.org/https://doi.org/10.1016/j.jtbi.2019.04.023},
  \url{https://www.sciencedirect.com/science/article/pii/S0022519319301730}.

\bibitem{savare2023diss}
{\sc G.~Cavagnari, G.~Savar{\'e}, and G.~E. Sodini}, {\em Dissipative
  probability vector fields and generation of evolution semigroups in
  {W}asserstein spaces}, Probability Theory and Related Fields, 185 (2023),
  pp.~1087--1182, \url{https://doi.org/10.1007/s00440-022-01148-7},
  \url{https://doi.org/10.1007/s00440-022-01148-7}.

\bibitem{bullo2023}
{\sc P.~Cisneros-Velarde and F.~Bullo}, {\em Distributed {W}asserstein
  barycenters via displacement interpolation}, IEEE Transactions on Control of
  Network Systems, 10 (2023), pp.~785--795,
  \url{https://doi.org/10.1109/TCNS.2022.3210341}.

\bibitem{cuturi14fast}
{\sc M.~Cuturi and A.~Doucet}, {\em Fast computation of {W}asserstein
  barycenters}, in Proceedings of the 31st International Conference on Machine
  Learning, E.~P. Xing and T.~Jebara, eds., vol.~32 of Proceedings of Machine
  Learning Research, Bejing, China, 22--24 Jun 2014, PMLR, pp.~685--693,
  \url{https://proceedings.mlr.press/v32/cuturi14.html}.

\bibitem{duistermaat2010distribution}
{\sc J.~J. Duistermaat, J.~A. Kolk, J.~Duistermaat, and J.~Kolk}, {\em
  Distribution Kernels}, Springer, 2010.

\bibitem{fetecau2011collective}
{\sc R.~C. Fetecau}, {\em Collective behavior of biological aggregations in two
  dimensions: A nonlocal kinetic model}, Mathematical Models and Methods in
  Applied Sciences, 21 (2011), pp.~1539--1569,
  \url{https://doi.org/10.1142/S0218202511005489},
  \url{https://doi.org/10.1142/S0218202511005489},
  \url{https://arxiv.org/abs/https://doi.org/10.1142/S0218202511005489}.

\bibitem{flamary2021pot}
{\sc R.~Flamary, N.~Courty, A.~Gramfort, M.~Z. Alaya, A.~Boisbunon, S.~Chambon,
  L.~Chapel, A.~Corenflos, K.~Fatras, N.~Fournier, L.~Gautheron, N.~T. Gayraud,
  H.~Janati, A.~Rakotomamonjy, I.~Redko, A.~Rolet, A.~Schutz, V.~Seguy, D.~J.
  Sutherland, R.~Tavenard, A.~Tong, and T.~Vayer}, {\em Pot: Python optimal
  transport}, Journal of Machine Learning Research, 22 (2021), pp.~1--8,
  \url{http://jmlr.org/papers/v22/20-451.html}.

\bibitem{fornasier2021consensusbased}
{\sc M.~Fornasier, T.~Klock, and K.~Riedl}, {\em Consensus-based optimization
  methods converge globally}, arXiv:2103.15130,  (2021),
  \url{https://arxiv.org/abs/2103.15130},
  \url{https://arxiv.org/abs/2103.15130}.

\bibitem{frechet1948elements}
{\sc M.~Fr{\'e}chet}, {\em Les {\'e}l{\'e}ments al{\'e}atoires de nature
  quelconque dans un espace distanci{\'e}}, in Annales de l'institut Henri
  Poincar{\'e}, vol.~10, 1948, pp.~215--310.

\bibitem{friedkin1990}
{\sc N.~E. Friedkin and E.~C. Johnsen}, {\em Social influence and opinions},
  The Journal of Mathematical Sociology, 15 (1990), pp.~193--206,
  \url{https://doi.org/10.1080/0022250X.1990.9990069},
  \url{https://doi.org/10.1080/0022250X.1990.9990069},
  \url{https://arxiv.org/abs/https://doi.org/10.1080/0022250X.1990.9990069}.

\bibitem{gero2023gencol}
{\sc G.~Friesecke and M.~Penka}, {\em The {G}en{C}ol algorithm for
  high-dimensional optimal transport: General formulation and application to
  barycenters and {W}asserstein splines}, SIAM Journal on Mathematics of Data
  Science, 5 (2023), pp.~899--919, \url{https://doi.org/10.1137/22M1524254},
  \url{https://doi.org/10.1137/22M1524254},
  \url{https://arxiv.org/abs/https://doi.org/10.1137/22M1524254}.

\bibitem{genevay2016stochastic}
{\sc A.~Genevay, M.~Cuturi, G.~Peyr{\'e}, and F.~Bach}, {\em Stochastic
  optimization for large-scale optimal transport}, Advances in neural
  information processing systems, 29 (2016).

\bibitem{goodfellow2014}
{\sc I.~Goodfellow, J.~Pouget-Abadie, M.~Mirza, B.~Xu, D.~Warde-Farley,
  S.~Ozair, A.~Courville, and Y.~Bengio}, {\em Generative adversarial nets}, in
  Advances in Neural Information Processing Systems, Z.~Ghahramani, M.~Welling,
  C.~Cortes, N.~Lawrence, and K.~Weinberger, eds., vol.~27, Curran Associates,
  Inc., 2014,
  \url{https://proceedings.neurips.cc/paper_files/paper/2014/file/5ca3e9b122f61f8f06494c97b1afccf3-Paper.pdf}.

\bibitem{Hegselmann2002}
{\sc R.~Hegselmann and U.~Krause}, {\em Opinion dynamics and bounded
  confidence, models, analysis and simulation}, Journal of Artificial Societies
  and Social Simulation, 5 (2002), p.~2,
  \url{http://jasss.soc.surrey.ac.uk/5/3/2.html}.

\bibitem{hille2021equivalence}
{\sc S.~C. Hille, T.~Szarek, D.~T. Worm, and M.~A. Ziemla{\'n}ska}, {\em
  Equivalence of equicontinuity concepts for {M}arkov operators derived from a
  schur-like property for spaces of measures}, Statistics \& Probability
  Letters, 169 (2021), p.~108964.

\bibitem{JLJ}
{\sc S.~Jin, L.~Li, and J.-G. Liu}, {\em {R}andom {B}atch {M}ethods ({RBM}) for
  interacting particle systems}, Journal of Computational Physics, 400 (2020),
  p.~108877.

\bibitem{kelley2017general}
{\sc J.~L. Kelley}, {\em General topology}, Courier Dover Publications, 2017.

\bibitem{kuramoto1984}
{\sc Y.~Kuramoto}, {\em {Cooperative Dynamics of Oscillator Community: A Study
  Based on Lattice of Rings}}, Progress of Theoretical Physics Supplement, 79
  (1984), pp.~223--240, \url{https://doi.org/10.1143/PTPS.79.223},
  \url{https://doi.org/10.1143/PTPS.79.223},
  \url{https://arxiv.org/abs/https://academic.oup.com/ptps/article-pdf/doi/10.1143/PTPS.79.223/5454430/79-223.pdf}.

\bibitem{le2017existence}
{\sc T.~Le~Gouic and J.-M. Loubes}, {\em Existence and consistency of
  {W}asserstein barycenters}, Probability Theory and Related Fields, 168
  (2017), pp.~901--917.

\bibitem{ma2019multi}
{\sc H.~Ma, S.~Shen, M.~Yu, Z.~Yang, M.~Fei, and H.~Zhou}, {\em
  Multi-population techniques in nature inspired optimization algorithms: A
  comprehensive survey}, Swarm and Evolutionary Computation, 44 (2019),
  pp.~365--387,
  \url{https://doi.org/https://doi.org/10.1016/j.swevo.2018.04.011},
  \url{https://www.sciencedirect.com/science/article/pii/S2210650217306363}.

\bibitem{Makkuva2020}
{\sc A.~Makkuva, A.~Taghvaei, S.~Oh, and J.~Lee}, {\em Optimal transport
  mapping via input convex neural networks}, in Proceedings of the 37th
  International Conference on Machine Learning, H.~D. III and A.~Singh, eds.,
  vol.~119 of Proceedings of Machine Learning Research, PMLR, 13--18 Jul 2020,
  pp.~6672--6681, \url{https://proceedings.mlr.press/v119/makkuva20a.html}.

\bibitem{Panaretos2020}
{\sc V.~M. Panaretos and Y.~Zemel}, {\em An Invitation to Statistics in
  {W}asserstein Space}, Springer International Publishing, 2020,
  \url{https://doi.org/10.1007/978-3-030-38438-8},
  \url{https://doi.org/10.1007/978-3-030-38438-8}.

\bibitem{pareschi13}
{\sc L.~Pareschi and G.~Toscani}, {\em Interacting Multiagent Systems: Kinetic
  equations and Monte Carlo methods}, Oxford University Press, 2013.

\bibitem{cuturi2019computational}
{\sc G.~Peyr\'{e} and M.~Cuturi}, {\em Computational optimal transport},
  Foundations and Trends in Machine Learning, 11 (2019), pp.~355--607.

\bibitem{piccoli2018mdi}
{\sc B.~Piccoli}, {\em Measure differential inclusions}, in 2018 IEEE
  Conference on Decision and Control (CDC), 2018, pp.~1323--1328,
  \url{https://doi.org/10.1109/CDC.2018.8618884}.

\bibitem{piccoli2019measure}
{\sc B.~Piccoli}, {\em Measure differential equations}, Archive for rational
  mechanics and analysis, 233 (2019), pp.~1289--1317.

\bibitem{pinnau2017consensus}
{\sc R.~Pinnau, C.~Totzeck, O.~Tse, and S.~Martin}, {\em A consensus-based
  model for global optimization and its mean-field limit}, Math. Models Methods
  Appl. Sci., 27 (2017), pp.~183--204,
  \url{https://doi.org/10.1142/S0218202517400061}.

\bibitem{Samanidou2007}
{\sc E.~Samanidou, E.~Zschischang, D.~Stauffer, and T.~Lux}, {\em Agent-based
  models of financial markets}, Reports on Progress in Physics, 70 (2007),
  p.~409, \url{https://doi.org/10.1088/0034-4885/70/3/R03},
  \url{https://dx.doi.org/10.1088/0034-4885/70/3/R03}.

\bibitem{santambrogio2015optimal}
{\sc F.~Santambrogio}, {\em Optimal Transport for Applied Mathematicians},
  Birkhäuser, 2015.

\bibitem{seguy2018large}
{\sc V.~Seguy, B.~B. Damodaran, R.~Flamary, N.~Courty, A.~Rolet, and
  M.~Blondel}, {\em Large scale optimal transport and mapping estimation}, in
  International Conference on Learning Representations, 2018,
  \url{https://openreview.net/forum?id=B1zlp1bRW}.

\bibitem{yu2009sensor}
{\sc W.~Yu, G.~Chen, Z.~Wang, and W.~Yang}, {\em Distributed consensus
  filtering in sensor networks}, Trans. Sys. Man Cyber. Part B, 39 (2009),
  p.~1568–1577, \url{https://doi.org/10.1109/TSMCB.2009.2021254},
  \url{https://doi.org/10.1109/TSMCB.2009.2021254}.

\bibitem{alvarez2016fixed}
{\sc P.~C. Álvarez Esteban, E.~{del Barrio}, J.~Cuesta-Albertos, and
  C.~Matrán}, {\em A fixed-point approach to barycenters in {W}asserstein
  space}, Journal of Mathematical Analysis and Applications, 441 (2016),
  pp.~744--762,
  \url{https://doi.org/https://doi.org/10.1016/j.jmaa.2016.04.045},
  \url{https://www.sciencedirect.com/science/article/pii/S0022247X16300907}.

\end{thebibliography}


\appendix

\section{Proof of Proposition \ref{p:uppersemi}}
\label{appendix}

The aim of this section is to prove upper semi-continuity of the MPVF 
\[\mathcal{V} =  \bigotimes_{i=1}^N (\pi_0, \pi_1 - \pi_0)_\#  \Gamma_o\left(\mu^i, \Bar\left( \lambda \mathbb{P}_\mu\right)\right)\]
introduced in \eqref{def:V}. We will show that the map $\Gamma_o$ is upper semi-continuous in all variables in case of a multi-marginal optimal transport problem with continuous cost. Since the barycenter is the solution to a multi-marginal transport problem as shown in \cite{santambrogio2015optimal}, and the push-forward is continuous, their compositions is upper semi-continuous.
Given that these asserts are mostly folklore and the closest we have is in the context of weak topology and the Wasserstein distance in \cite{ambrosio2005gradient}, we give here a precise exposition here with a more \textit{ad hoc} statements.

\begin{lemma}
Given $X$ and $Y$ two Polish spaces, then for any $\mu\in\mathcal{P}(X)$ and any $\nu\in\mathcal{P}(Y)$ the set $\Gamma(\mu,\nu)$ is compact.
\end{lemma}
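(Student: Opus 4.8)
The plan is to equip $\mathcal{P}(X\times Y)$ with the narrow (weak) topology and to exhibit $\Gamma(\mu,\nu)$ as a closed, tight subset of it; compactness then follows from Prokhorov's theorem. Since $X$ and $Y$ are Polish, so is $X\times Y$, and the narrow topology on $\mathcal{P}(X\times Y)$ is metrizable, so it suffices to argue with sequences. I also note that $\Gamma(\mu,\nu)$ is nonempty since $\mu\otimes\nu\in\Gamma(\mu,\nu)$.

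\textbf{Tightness.} First I would show that $\Gamma(\mu,\nu)$ is tight. The singletons $\{\mu\}$ and $\{\nu\}$ are trivially tight, so for any $\varepsilon>0$ there are compact sets $K_X\subseteq X$ and $K_Y\subseteq Y$ with $\mu(X\setminus K_X)<\varepsilon/2$ and $\nu(Y\setminus K_Y)<\varepsilon/2$. The product $K_X\times K_Y$ is compact in $X\times Y$, and for every $\gamma\in\Gamma(\mu,\nu)$ the marginal constraints yield
\[
\gamma\big((X\times Y)\setminus(K_X\times K_Y)\big)\le \mu(X\setminus K_X)+\nu(Y\setminus K_Y)<\varepsilon,
\]
uniformly in $\gamma$. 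Hence $\Gamma(\mu,\nu)$ is tight, and by Prokhorov's theorem it is relatively compact in the narrow topology.

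\textbf{Closedness.} Next I would verify that $\Gamma(\mu,\nu)$ is narrowly closed. Let $p_X,p_Y$ denote the coordinate projections of $X\times Y$. For any $\varphi\in C_b(X)$ the function $\varphi\circ p_X$ lies in $C_b(X\times Y)$, so if $\gamma_n\to\gamma$ narrowly with $\gamma_n\in\Gamma(\mu,\nu)$ then
\[
\int_X \varphi\, d(p_X)_\#\gamma_n=\int_{X\times Y}\varphi\circ p_X\,d\gamma_n \longrightarrow \int_{X\times Y}\varphi\circ p_X\,d\gamma=\int_X\varphi\,d(p_X)_\#\gamma.
\]
This shows $(p_X)_\#\gamma_n\to(p_X)_\#\gamma$ narrowly; since $(p_X)_\#\gamma_n=\mu$ for all $n$, we get $(p_X)_\#\gamma=\mu$, and symmetrically $(p_Y)_\#\gamma=\nu$. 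Therefore $\gamma\in\Gamma(\mu,\nu)$, which proves closedness.

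\textbf{Conclusion and main difficulty.} A narrowly closed subset of a relatively compact set is compact, so combining the two steps yields compactness of $\Gamma(\mu,\nu)$. The proof is essentially routine; the only points requiring care are the \emph{uniform} tightness estimate (the bound on $\gamma\big((X\times Y)\setminus(K_X\times K_Y)\big)$ must be independent of $\gamma$, which is exactly what the marginal constraints provide) and the continuity of the marginal maps under narrow convergence, which is what forces the limit to preserve both marginals. Everything else reduces to the standard application of Prokhorov's theorem in the Polish setting.
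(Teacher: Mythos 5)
Your proof is correct and follows the same core strategy as the paper: uniform tightness of $\Gamma(\mu,\nu)$ via compact sets for the marginals, then Prokhorov's theorem. In fact your argument is more complete than the paper's, which stops after extracting a weakly convergent subsequence and never checks that the limit point still has marginals $\mu$ and $\nu$ --- that is, the paper as written only establishes \emph{relative} compactness, whereas your closedness step (testing against $\varphi \circ p_X$ for $\varphi \in C_b(X)$ to show the marginal constraints pass to the limit) is exactly what is needed to conclude genuine compactness; the paper effectively defers this verification to its subsequent lemma on the continuity of $(\mu,\nu) \mapsto \Gamma(\mu,\nu)$, where the same marginal-preservation computation appears.
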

\begin{proof}
Given $\mu\in\mathcal{P}(X)$, $\nu\in\mathcal{P}(Y)$, and a sequence $\gamma^n\in\Gamma(\mu,\nu)$ we get that for any $\varepsilon>0$ we can find two compact sets $K_1\subseteq X$ and $K_2\subseteq Y$ such that
\[
\mu(K_1^c)\leq \varepsilon \quad \nu(K_2^c)\leq \varepsilon
\]
from which we get
\begin{align*}
\gamma^n((K_1\times K_2)^c)&\leq \gamma^n(K_1^c\times Y)+\gamma^n(X\times K_2^c)\\
&=\mu(K_1^c)+\nu(K_2^c)\\
&\leq 2\varepsilon. 
\end{align*}
The above proves that $\gamma^n$ is a tight family of probability measures and thanks to Prokhorov's Theorem \cite{ambrosio2005gradient}, we can find a converging subsequence and thus proving our thesis.
\end{proof}

\begin{lemma}\label{Lem:minimosuinsiemi}
Given a metric space $M$ and a continuous function $f:M\to\RR$.
We call $\mathcal{K}(M)$ the metric space of all compact sets with the Hausdorff metric.
Then the function
\begin{align*}
F:\mathcal{K}(M)&\to \RR\\
K&\mapsto \min_{x\in K}f(x)
\end{align*}
is lower semi-continuous.
\end{lemma}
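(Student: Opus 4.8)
The plan is to prove lower semi-continuity in its sequential form. Since $\mathcal{K}(M)$ endowed with the Hausdorff metric $d_H$ is itself a metric space, it suffices to fix a sequence $K_n \to K$ in $\mathcal{K}(M)$ and show
\[
\liminf_{n\to\infty} F(K_n) \geq F(K)\,.
\]
Throughout, note that $F$ is well defined: each $K_n$ and $K$ is compact and $f$ is continuous, so the minima exist and are attained.

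First I would set $L := \liminf_{n} F(K_n)$ and pass to a subsequence (which I relabel by $n$) along which $F(K_n) \to L$; a priori $L$ could equal $-\infty$, and the argument will rule this out. For each $n$, compactness of $K_n$ together with continuity of $f$ lets me select a minimizer $x_n \in K_n$ with $f(x_n) = \min_{x\in K_n} f(x) = F(K_n)$. The crucial geometric input is that Hausdorff convergence forces, in particular, $\sup_{x\in K_n} d(x,K) \to 0$, and this is the \emph{only} half of the Hausdorff distance I need. Consequently $d(x_n, K) \to 0$, and since $K$ is compact the distance to $K$ is attained, so I may pick $y_n \in K$ with $d(x_n, y_n) = d(x_n, K) \to 0$.

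Next I would invoke compactness of $K$ to extract a further subsequence $y_{n_k} \to y \in K$; because $d(x_{n_k}, y_{n_k}) \to 0$, the same subsequence gives $x_{n_k} \to y$. Continuity of $f$ then yields $f(x_{n_k}) \to f(y)$. On the other hand $f(x_{n_k}) = F(K_{n_k})$ is a subsequence of a sequence converging to $L$, so $f(x_{n_k}) \to L$ as well; comparing the two limits gives $L = f(y)$. In particular $L > -\infty$, and
\[
L = f(y) \geq \min_{z\in K} f(z) = F(K)\,,
\]
which is precisely the desired inequality.

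I expect the only real (and minor) obstacle to be the bookkeeping of the two successive subsequence extractions and the verification that merely the inclusion-type half $\sup_{x\in K_n} d(x,K)\to 0$ of the Hausdorff distance is used. I would also remark that a symmetric argument, exploiting the complementary half $\sup_{z\in K} d(z,K_n)\to 0$, yields the reverse inequality $\limsup_n F(K_n)\leq F(K)$ and hence full continuity of $F$; however, this is not required for the stated lemma, and I would present only the lower semi-continuous half.
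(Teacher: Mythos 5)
Your proof is correct and follows essentially the same route as the paper's: select minimizers $x_n\in K_n$, use Hausdorff convergence together with compactness of $K$ to produce a limit point $y\in K$ along a subsequence, and conclude $\liminf_n F(K_n)\geq f(y)\geq F(K)$. If anything, your write-up is more careful than the paper's at the subsequence-extraction step (the paper asserts the deduction from convergent sequences directly, leaving the choice of minimizers and the compactness argument implicit), while the paper additionally runs the symmetric argument you only sketch in your closing remark, thereby obtaining $\liminf_n F(K_n)=F(K)$.
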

\begin{proof}
Given a sequence $K_n\in\mathcal{K}(M)$ converging to $K\in\mathcal{K}(M)$ in the Hausdorff metric means that any Cauchy sequence $x_n\in K_n$ converges to an $x\in K$ and for any $x\in K$ there is a sequence $x_n\in K_n$ converging to it.
Given any converging sequence $x_n\in K_n$ to an element $ x\in K$ we have
\[
F(K)\leq f(x) = \lim_{n\to\infty}f(x_n)
\]
from which we can deduce
\[
\liminf_{n\to\infty} F(K_n)\geq F(K).
\]
Now let us take $x\in K$ such that $f(x)=F(K)$ and consider $x_n\in K_n$ such that $x_n\to x$ and we can compute
\[
F(K)=\lim_{n\to\infty} f(x_n)\geq \liminf_{n\to\infty} F(K_n)\geq F(K)
\]
thanks to which we have
\[
\liminf_{n\to\infty} F(K_n)= F(K).
\]

\end{proof}

\begin{lemma}
Given $X$ and $Y$ separable metric spaces then the map
\begin{align*}
\Gamma : \mathcal{P}_2(X)\times\mathcal{P}_2(Y)&\to \mathcal{K}(\mathcal{P}(X\times Y))\\
(\mu,\nu)&\mapsto\Gamma(\mu,\nu)
\end{align*}
is continuous.
\end{lemma}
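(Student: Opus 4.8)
The plan is to prove continuity of the set-valued map $(\mu,\nu)\mapsto\Gamma(\mu,\nu)$ by showing Hausdorff convergence $\mathrm{d}_H\big(\Gamma(\mu_n,\nu_n),\Gamma(\mu,\nu)\big)\to 0$ whenever $\mu_n\to\mu$ in $\mathcal{P}_2(X)$ and $\nu_n\to\nu$ in $\mathcal{P}_2(Y)$. By the preceding lemma each $\Gamma(\mu,\nu)$ is compact, so the map is well defined into $\mathcal{K}(\mathcal{P}(X\times Y))$, and it suffices to control the two one-sided excesses making up the Hausdorff distance, namely $\sup_{\gamma_n\in\Gamma(\mu_n,\nu_n)}\inf_{\gamma\in\Gamma(\mu,\nu)}W_2(\gamma_n,\gamma)$ and $\sup_{\gamma\in\Gamma(\mu,\nu)}\inf_{\gamma_n\in\Gamma(\mu_n,\nu_n)}W_2(\gamma,\gamma_n)$. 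I would treat these separately.

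For the first excess I would argue by contradiction. If it fails to vanish there are $\varepsilon>0$ and (passing to a subsequence, not relabeled) couplings $\gamma_n\in\Gamma(\mu_n,\nu_n)$ with $\inf_{\gamma\in\Gamma(\mu,\nu)}W_2(\gamma_n,\gamma)\geq\varepsilon$. The marginals $\mu_n,\nu_n$ are convergent, hence tight, so $\{\gamma_n\}$ is tight exactly as in the compactness lemma; by Prokhorov a further subsequence converges narrowly to some $\gamma$, and continuity of the coordinate projections under narrow convergence forces $(p_X)_\#\gamma=\mu$, $(p_Y)_\#\gamma=\nu$, so $\gamma\in\Gamma(\mu,\nu)$. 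Fixing a base point $x_0$, the relations $W_2(\mu_n,\mu)\to0$ and $W_2(\nu_n,\nu)\to0$ give $\int d_X(x,x_0)^2\,d\mu_n\to\int d_X(x,x_0)^2\,d\mu$ and the analogue for $\nu$, whence the second moments $\int(d_X^2+d_Y^2)\,d\gamma_n$ converge to $\int(d_X^2+d_Y^2)\,d\gamma$; together with narrow convergence this upgrades $\gamma_n\to\gamma$ to convergence in $W_2$, contradicting $W_2(\gamma_n,\gamma)\geq\varepsilon$.

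The heart of the argument is the second excess, which I would handle by an explicit \emph{gluing} uniform in $\gamma$. Fix optimal couplings $\sigma_n\in\Gamma_o(\mu,\mu_n)$ and $\tau_n\in\Gamma_o(\nu,\nu_n)$ and disintegrate them (the spaces being separable) as $\sigma_n(dx,dx')=\mu(dx)\,\sigma_n^x(dx')$ and $\tau_n(dy,dy')=\nu(dy)\,\tau_n^y(dy')$. For any $\gamma\in\Gamma(\mu,\nu)$ set
\[
\Pi_n(dx,dy,dx',dy'):=\gamma(dx,dy)\,\sigma_n^x(dx')\,\tau_n^y(dy')\in\mathcal{P}(X\times Y\times X\times Y),
\]
and let $q(x,y,x',y'):=(x',y')$, $\gamma_n:=q_\#\Pi_n$. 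A direct marginal computation gives $(p_X)_\#\gamma_n=\mu_n$ and $(p_Y)_\#\gamma_n=\nu_n$, so $\gamma_n\in\Gamma(\mu_n,\nu_n)$. Viewing $\Pi_n$ as a (generally suboptimal) coupling between $\gamma$ and $\gamma_n$ and using the product metric on $X\times Y$ yields
\[
W_2(\gamma,\gamma_n)^2\leq\int d_X(x,x')^2\,\sigma_n(dx,dx')+\int d_Y(y,y')^2\,\tau_n(dy,dy')=W_2(\mu,\mu_n)^2+W_2(\nu,\nu_n)^2,
\]
a bound independent of $\gamma$. Hence the second excess is at most $\big(W_2(\mu,\mu_n)^2+W_2(\nu,\nu_n)^2\big)^{1/2}\to0$.

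Combining the two estimates gives $\mathrm{d}_H\big(\Gamma(\mu_n,\nu_n),\Gamma(\mu,\nu)\big)\to0$, proving continuity. I expect the main obstacle to be the gluing step: one must verify that the disintegration-based measure $\Pi_n$ really has $\mu_n,\nu_n$ as the appropriate marginals and, crucially, that the resulting estimate is \emph{uniform} over all $\gamma\in\Gamma(\mu,\nu)$, so that the supremum in $\gamma$ is controlled without invoking compactness of $\Gamma(\mu,\nu)$. A secondary point of care is the topology on the target: since every coupling here has finite second moments, working with $W_2$ is legitimate and automatically implies convergence for any weaker (narrow) metrization used to define the Hausdorff metric on $\mathcal{K}(\mathcal{P}(X\times Y))$.
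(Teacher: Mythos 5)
Your proof is correct, and its central construction is exactly the paper's: to approximate a fixed $\gamma\in\Gamma(\mu,\nu)$ by elements of $\Gamma(\mu_n,\nu_n)$, both you and the paper disintegrate optimal plans $\sigma_n\in\Gamma_o(\mu,\mu_n)$, $\tau_n\in\Gamma_o(\nu,\nu_n)$ and glue them onto $\gamma$; your $q_\#\Pi_n$ is precisely the measure $\gamma^n$ that the paper defines by duality against test functions. Where you genuinely diverge is in how the two sides of Hausdorff convergence are established and in the strength of the estimates. The paper verifies Kuratowski-type conditions (narrow limits of couplings are couplings; every coupling of the limit pair is approximable) and proves only the bounded-Lipschitz bound $|\int\eta\,d\gamma^n-\int\eta\,d\gamma|\le \mathrm{Lip}(\eta)\,(W_2(\mu,\mu_n)+W_2(\nu,\nu_n))$, leaving implicit both the uniformity over $\gamma$ required by the Hausdorff supremum and the passage from narrow closeness to closeness in the metric actually used on $\mathcal{P}(X\times Y)$. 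You instead bound the two Hausdorff excesses directly: the gluing yields the quantitative estimate $W_2(\gamma,\gamma_n)^2\le W_2(\mu,\mu_n)^2+W_2(\nu,\nu_n)^2$, manifestly uniform in $\gamma$, and the opposite excess is handled by contradiction via tightness, Prokhorov, and the standard upgrade from narrow convergence to $W_2$ convergence using convergence of second moments (which for couplings are determined by the marginals alone). Your route buys a cleaner and strictly stronger conclusion — genuine Hausdorff convergence in the $W_2$ metric with an explicit rate — whereas the paper's argument, read literally, establishes set convergence in a weaker Kuratowski/narrow sense and needs compactness to be turned into the stated continuity; your closing remark about which metrization of $\mathcal{P}(X\times Y)$ underlies $\mathcal{K}(\mathcal{P}(X\times Y))$ is exactly the point the paper glosses over. (One shared caveat: both arguments invoke tightness of narrowly convergent families and disintegration, which really require the completeness assumed in the paper's preceding compactness lemma, i.e.\ Polish rather than merely separable spaces.)
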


\begin{proof}
Given converging sequences $\mu_n\in\mathcal{P}(X)$ and $\nu_n\in\mathcal{P}(Y)$ to $\mu\in\mathcal{P}(X)$ and $\nu\in\mathcal{P}(Y)$ respectively we show the necessary and sufficient properties to get Hausdorff convergence.
For the first part we are given $\gamma^n\in\Gamma(\mu_n,\nu_n)$ such that $\gamma^n\to\gamma$ in the weak topology we get that for $\varphi\in C(X)$ we compute
\[
\int_X\varphi\mu=\lim_n\int_X\varphi\mu^n=\lim_n\int_{X\times Y}\varphi\circ\pi_1\gamma^n= \int_{X\times Y}\varphi\circ\pi_1\gamma
\]
and because it is the same also for the second component we have $\gamma\in\Gamma(\mu,\nu)$.
Now for the second step we consider $\gamma\in\Gamma(\mu,\nu)$ and we define $\tau^n\in\Gamma_o(\mu,\mu_n)$ and $\sigma^n\in\Gamma_o (\nu,\nu_n)$ such that
\[
W_2(\mu,\mu_n)^2=\int d_X(x_1,x_2)^2\tau^n(dx_1,dx_2)\quad W_2(\nu,\nu_n)^2=\int d_Y(y_1,y_2)^2\sigma^n(dy_1,dy_2)\,.
\]
Now we disintegrate both $\tau^n$ and $\sigma^n$ over $\mu$ and $\nu$ respectively to obtain Borel collections of probability measures $\tau^n_x$ and $\sigma^n_y$ such that for any $\eta\in C(X\times X)$ and $\xi\in C(Y\times Y)$
\begin{align*}
\int_{X\times X}\eta(x_1,x_2)\tau^n(dx_1,dx_2)&=\int_X \int_{X }\eta(x_1,x_2)\tau^n_{x_1}(dx_2) \mu(dx_1)\\
\int_{Y\times Y}\xi(y_1,y_2)\sigma^n(dy_1,dy_2)&=\int_Y \int_{Y}\xi(y_1,y_2)\sigma^n_{y_1}(dy_2) \nu(dy_1)
\end{align*}
and use them to define a sequence of measures $\gamma^n\in\Gamma(\mu_n,\nu_n)$ as
\[
\int_{X\times Y}\eta\gamma^n=\int_{X\times Y}\left(\int_{X\times Y}\eta(w,z)(\tau^n_{ x}\otimes \sigma^n_{ y})(dw,dz)\right)\gamma(d x,d y)\quad \eta\in\Lip(X\times Y)
\]
and show that it converges to $\int\eta\gamma$.
We compute
\begin{align*}
\Big|\int_{X\times Y}\eta\gamma^n&-\int_{X\times Y}\eta\gamma\Big| \\
&=\left|\int_{X\times Y}\left(\int_{X\times Y}\eta(w,z)-\eta(x,y)(\tau^n_{ x}\otimes \sigma^n_{ y})(dw,dz)\right)\gamma(d x,d y)\right|\\
&\leq \Lip(\eta)\int_{X\times Y}\left(\int_{X\times Y}d_X(w,x)+d_Y(z,y)(\tau^n_{ x}\otimes \sigma^n_{ y})(dw,dz)\right)\gamma(d x,d y)\\
&\leq \Lip(\eta)\left(W_2(\mu,\mu_n)+ W_2(\nu,\nu_n)\right)
\end{align*}
thanks to which we conclude the Lemma.
\end{proof}

\begin{corollary}\label{Cor:USCTransport}
Given $X$ and $Y$ separable metric spaces, and a continuous function $c:X\times Y\to\RR$, then the multivalued function
\begin{align*}
\Gamma_o : \mathcal{P}(X)\times\mathcal{P}(Y)&\to \mathcal{K}(\mathcal{P}(X\times Y))\\
(\mu,\nu)&\mapsto\underset{\gamma\in\Gamma(\mu,\nu)}{\mathrm{argmin}} \int_{X\times Y} c\gamma
\end{align*}
is upper semi-continuous.
\end{corollary}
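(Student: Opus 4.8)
The plan is to show that the graph of $\Gamma_o$ is closed, which for a compact-valued correspondence is the content of upper semi-continuity. Concretely, I would fix sequences $\mu_n\to\mu$ in $\mathcal{P}(X)$ and $\nu_n\to\nu$ in $\mathcal{P}(Y)$, together with optimal couplings $\gamma_n\in\Gamma_o(\mu_n,\nu_n)$ converging weakly to some $\gamma$, and prove that $\gamma\in\Gamma_o(\mu,\nu)$. Extraction of such a convergent $\gamma$ is guaranteed by the first lemma of the appendix (compactness of $\Gamma(\mu,\nu)$ together with tightness of the marginals). Throughout, write $C(\mu,\nu):=\min_{\sigma\in\Gamma(\mu,\nu)}\int c\,d\sigma=F(\Gamma(\mu,\nu))$ for the optimal value, with $F$ and $f(\sigma)=\int c\,d\sigma$ as in Lemma \ref{Lem:minimosuinsiemi}; the minimum is attained since $\Gamma(\mu,\nu)$ is compact and $f$ is continuous.

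The first step is feasibility. By the continuity of the correspondence $\Gamma$ established in the preceding lemma, its graph is closed; since $\gamma_n\in\Gamma(\mu_n,\nu_n)$ and $(\mu_n,\nu_n,\gamma_n)\to(\mu,\nu,\gamma)$, I conclude $\gamma\in\Gamma(\mu,\nu)$. In particular $\gamma$ is admissible, so $\int c\,d\gamma\geq C(\mu,\nu)$.

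The second step is optimality, argued by contradiction. Suppose $\int c\,d\gamma>C(\mu,\nu)$ and pick a genuine minimizer $\gamma^*\in\Gamma_o(\mu,\nu)$. Using the approximation direction of the Hausdorff convergence $\Gamma(\mu_n,\nu_n)\to\Gamma(\mu,\nu)$ from the preceding lemma, I obtain couplings $\sigma_n\in\Gamma(\mu_n,\nu_n)$ with $\sigma_n\to\gamma^*$. Optimality of each $\gamma_n$ gives $\int c\,d\gamma_n\leq\int c\,d\sigma_n$, and passing to the limit --- via lower semi-continuity of $\sigma\mapsto\int c\,d\sigma$ along $\gamma_n\to\gamma$ and its continuity along $\sigma_n\to\gamma^*$ --- yields $\int c\,d\gamma\leq\int c\,d\gamma^*=C(\mu,\nu)$, contradicting the assumption. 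Hence $\gamma\in\Gamma_o(\mu,\nu)$ and the graph is closed. Equivalently, the whole argument can be packaged as Berge's maximum theorem applied to the continuous constraint correspondence $\Gamma$ and the continuous objective $f$, which directly delivers upper semi-continuity and compactness of the argmin.

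The main obstacle is the behavior of the linear functional $f(\sigma)=\int c\,d\sigma$ under weak convergence when the continuous cost $c$ is unbounded, as is the case for the quadratic cost. Weak convergence alone only gives lower semi-continuity of $f$ when $c$ is bounded below, so the estimate $\int c\,d\gamma\leq\liminf_n\int c\,d\gamma_n$ is safe, but the reverse control $\limsup_n\int c\,d\sigma_n\leq\int c\,d\gamma^*$ requires either boundedness of $c$ or uniform integrability of $c$ against $\{\sigma_n\}$. In the setting of this paper this is not an issue: all agents and barycenters remain supported in a fixed compact set $\Omega$ by Lemma \ref{lem:SolutionsStayInsideCompact}, so $c$ is bounded and continuous on $\Omega\times\Omega$ and $f$ is genuinely continuous for the weak topology. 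I would therefore state and use the corollary in the compactly supported (bounded-cost) regime, which is all that Proposition \ref{p:uppersemi} requires, and only remark on the adaptations needed in the unbounded case.
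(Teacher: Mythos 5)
Your proof is correct and rests on the same two ingredients as the paper's own argument: feasibility of the limit coupling via the closed graph of the correspondence $(\mu,\nu)\mapsto\Gamma(\mu,\nu)$, and optimality via the Hausdorff-approximation direction of that same lemma. The difference is organizational and one of rigor. The paper routes optimality through the value function $W(\mu,\nu)=\min_{\sigma\in\Gamma(\mu,\nu)}\int c\,d\sigma$ and Lemma \ref{Lem:minimosuinsiemi}, then asserts $\int c\,d\gamma^n\to\int c\,d\gamma$ ``because $c$ is continuous''; you instead inline the recovery sequence $\sigma_n\to\gamma^*$ and split the limit passage into a lower semi-continuity estimate along $\gamma_n\to\gamma$ and a continuity estimate along $\sigma_n\to\gamma^*$ (equivalently, Berge's maximum theorem). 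What your version buys is an honest accounting of where these limit passages can fail: for unbounded $c$ (e.g.\ the quadratic cost) weak convergence alone does not give convergence of the cost integrals, a point the paper elides both in that step and in its application of Lemma \ref{Lem:minimosuinsiemi}, whose hypothesis requires $f(\sigma)=\int c\,d\sigma$ to be continuous on $\mathcal{P}(X\times Y)$. Your fix --- working in the compactly supported, bounded-cost regime, which is all that Theorems \ref{t:ac} and \ref{t:mdi} actually use --- is sound; note, however, that Proposition \ref{p:uppersemi} is stated on all of $\mathcal{P}_2(\Rd)$ with $W_2$-convergence (Definition \ref{def:uppersemi}), and in that setting the cleaner repair is to observe that $\sigma\mapsto\int|x-y|^2\,d\sigma$ \emph{is} continuous under $W_2$-convergence of the couplings, so the corollary holds in full generality provided all convergences in the appendix are read in the Wasserstein rather than the weak sense.
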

\begin{proof}
Given converging sequences $\mu_n\in\mathcal{P}(X)$ and $\nu_n\in\mathcal{P}(Y)$ to $\mu\in\mathcal{P}(X)$ and $\nu\in\mathcal{P}(Y)$ respectively, and a sequence $\gamma^n\in\Gamma_o(\mu_n,\nu_n)$ such that $\gamma^n\to\gamma\in\Gamma(\mu,\nu)$.
We define the function
\begin{align*}
W : \mathcal{P}(X)\times\mathcal{P}(Y)&\to \RR \\
(\mu,\nu)&\mapsto\min\left\{ \int_{X\times Y} c\gamma\,\middle|\, \gamma\in \Gamma(\mu,\nu)\right\}
\end{align*}
and from all the preceding Lemmas combined we have that
\[
\liminf_{n\to\infty}W(\mu_n,\nu_n)=W(\mu,\nu)
\]
and since we also have $W(\mu_n,\nu_n)=\int c\gamma^n\to\int c\gamma$ because $c$ is continuous we can conclude that
\[
W(\mu,\nu)=\int c\gamma
\]
and therefore $\gamma\in\Gamma_o(\mu,\nu)$.
\end{proof}

\begin{corollary}
Given a continuous function $c:(\Rd)^N\to\RR$ then the multivalued map
\[
\Gamma_o:\left(\mathcal{P}_2(\Rd)\right)^N\rightrightarrows\mathcal{P}_2((\Rd)^N)
\]
defined as
\[
\Gamma_o(\mu_1,\dots,\mu_N):=\underset{\gamma\in\Gamma(\mu_1,\dots,\mu_N)}{\mathrm{argmin}} \int_{(\Rd)^N}c\gamma\,
\]
is upper semi-continuous.
\end{corollary}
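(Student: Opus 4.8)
The plan is to reproduce the proof of Corollary \ref{Cor:USCTransport} in the multi-marginal setting, replacing the two-marginal coupling set $\Gamma(\mu,\nu)$ everywhere by the $N$-marginal set $\Gamma(\mu_1,\dots,\mu_N)$. The only genuinely new work lies in re-establishing, for $N$ marginals, the two structural facts that fed into Corollary \ref{Cor:USCTransport}: compactness of $\Gamma(\mu_1,\dots,\mu_N)$, and continuity of the set-valued map $(\mu_1,\dots,\mu_N)\mapsto\Gamma(\mu_1,\dots,\mu_N)$ into $\mathcal{K}(\mathcal{P}((\Rd)^N))$ with its Hausdorff metric. Once these are available, Lemma \ref{Lem:minimosuinsiemi} applies unchanged and the closing argument is identical.

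First I would prove compactness of $\Gamma(\mu_1,\dots,\mu_N)$ exactly as in the compactness lemma above. Given marginals $\mu_i\in\mathcal{P}(\Rd)$ and $\varepsilon>0$, pick compact sets $K_i\subseteq\Rd$ with $\mu_i(K_i^c)\leq\varepsilon/N$; then every $\gamma\in\Gamma(\mu_1,\dots,\mu_N)$ satisfies $\gamma\big((K_1\times\dots\times K_N)^c\big)\leq\sum_{i=1}^N\mu_i(K_i^c)\leq\varepsilon$, so the family is tight and Prokhorov's theorem yields compactness.

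The main step, and the one I expect to be the principal obstacle, is the continuity of $(\mu_1,\dots,\mu_N)\mapsto\Gamma(\mu_1,\dots,\mu_N)$. The closed-graph direction (if $\gamma^n\in\Gamma(\mu_1^n,\dots,\mu_N^n)$ and $\gamma^n\to\gamma$ then $\gamma\in\Gamma(\mu_1,\dots,\mu_N)$) is routine: testing each projection against $\varphi\in C(\Rd)$ passes to the weak limit. The delicate direction requires, given $\gamma\in\Gamma(\mu_1,\dots,\mu_N)$ and $\mu_i^n\to\mu_i$, the construction of approximants $\gamma^n\in\Gamma(\mu_1^n,\dots,\mu_N^n)$ with $\gamma^n\to\gamma$. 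I would mirror the gluing used in the continuity lemma above: fix optimal plans $\tau^{n,i}\in\Gamma_o(\mu_i,\mu_i^n)$, disintegrate each over $\mu_i$ to obtain Borel kernels $\tau^{n,i}_{x_i}$, and define
\[
\int_{(\Rd)^N}\eta\,\gamma^n:=\int_{(\Rd)^N}\left(\int_{(\Rd)^N}\eta(w_1,\dots,w_N)\,\Big(\textstyle\bigotimes_{i=1}^N\tau^{n,i}_{x_i}\Big)(dw_1,\dots,dw_N)\right)\gamma(dx_1,\dots,dx_N)
\]
for $\eta\in\Lip((\Rd)^N)$. By construction the $i$-th marginal of $\gamma^n$ is $\mu_i^n$, and a Lipschitz estimate gives
\[
\left|\int\eta\,\gamma^n-\int\eta\,\gamma\right|\leq\Lip(\eta)\sum_{i=1}^N W_2(\mu_i,\mu_i^n)\xrightarrow[n\to\infty]{}0\,,
\]
which forces $\gamma^n\to\gamma$ weakly. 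The only care needed beyond the two-marginal case is that the tensor kernel $\bigotimes_{i=1}^N\tau^{n,i}_{x_i}$ be jointly Borel measurable in $(x_1,\dots,x_N)$, which follows from the measurability of each disintegration.

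Finally, with $\Gamma$ continuous and compact-valued, Lemma \ref{Lem:minimosuinsiemi} composed with this continuity shows that the value function $W(\mu_1,\dots,\mu_N):=\min_{\gamma\in\Gamma(\mu_1,\dots,\mu_N)}\int c\,\gamma$ satisfies $\liminf_n W(\mu_1^n,\dots,\mu_N^n)=W(\mu_1,\dots,\mu_N)$ along $\mu_i^n\to\mu_i$. Then, given $\gamma^n\in\Gamma_o(\mu_1^n,\dots,\mu_N^n)$ with $\gamma^n\to\gamma\in\Gamma(\mu_1,\dots,\mu_N)$, continuity of $c$ gives $W(\mu_1^n,\dots,\mu_N^n)=\int c\,\gamma^n\to\int c\,\gamma$; combining the two limits yields $W(\mu_1,\dots,\mu_N)=\int c\,\gamma$, so $\gamma\in\Gamma_o(\mu_1,\dots,\mu_N)$. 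This is precisely the upper semi-continuity of the multi-marginal map $\Gamma_o$.
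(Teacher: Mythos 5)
Your proof is correct, but it takes a genuinely different route from the paper's. The paper disposes of this corollary in one line: it applies Corollary \ref{Cor:USCTransport} to the spaces $X=\Rd$ and $Y=(\Rd)^{N-1}$, i.e.\ it regroups the $N$ coordinates into two blocks and invokes the two-marginal result. You instead rebuild the entire two-marginal machinery directly in the multi-marginal setting: tightness (hence compactness) of $\Gamma(\mu_1,\dots,\mu_N)$, Hausdorff continuity of $(\mu_1,\dots,\mu_N)\mapsto\Gamma(\mu_1,\dots,\mu_N)$ via the glued plans $\gamma^n=\int\bigl(\bigotimes_{i=1}^N\tau^{n,i}_{x_i}\bigr)\gamma(dx_1,\dots,dx_N)$, and then the unchanged argmin argument through Lemma \ref{Lem:minimosuinsiemi}. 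Your route is longer, but it buys something real: the paper's one-liner is not a literal reduction, since grouping coordinates as $\Rd\times(\Rd)^{N-1}$ turns the constraint on the second block into ``each of the $N-1$ coordinate marginals is fixed,'' not ``the joint law on $(\Rd)^{N-1}$ equals a fixed $\nu$''; the second argument of Corollary \ref{Cor:USCTransport} would have to range over the set $\Gamma(\mu_2,\dots,\mu_N)$ rather than be a single measure, so making the paper's reduction rigorous requires a nested-optimization or induction step that the paper leaves implicit. Your direct construction sidesteps that subtlety entirely, at the cost of repeating the gluing estimate $\bigl|\int\eta\,\gamma^n-\int\eta\,\gamma\bigr|\leq\Lip(\eta)\sum_{i=1}^N W_2(\mu_i,\mu_i^n)$ with $N$ tensorized kernels instead of two. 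Note that the minor gaps you inherit (closedness of the coupling set under narrow limits inside the Prokhorov argument, and the integrability needed to pass $\int c\,d\gamma^n\to\int c\,d\gamma$ when the continuous cost $c$ is unbounded) are present in the paper's own two-marginal proofs to exactly the same degree, so they are not defects of your argument relative to the paper.
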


\begin{proof}
Apply Corollary \ref{Cor:USCTransport} to the spaces $\Rd$ and $(\Rd)^{N-1}$.
\end{proof}

\begin{proof}[Proof of Proposition \ref{p:uppersemi}]
The proof follows from the preceding corollaries because the push-forward is continuous, the map $\Gamma_o$ which is optimal for the quadratic distance is upper semi-continuous, and $\Bar(\cdot)$ is upper semi-continuous because it is the solution to a multi-marginal optimal transport problem as shown in \cite{santambrogio2015optimal}.
\end{proof}

\end{document}